\newtheorem{theorem}{Theorem}
\newtheorem{lemma}{Lemma}
\newtheorem{proposition}{Proposition}
\newtheorem{example}{Example}
\newtheorem{definition}{Definition}
\newtheorem{remark}{Remark}
\newcommand{\cO}{\ensuremath{\mathcal O}}
\newcommand{\bbN}{{\ensuremath{\mathbb N}} }
\newcommand{\bbR}{{\ensuremath{\mathbb R}} }
\newcommand{\be}{\begin{equation}}
\newcommand{\ee}{\end{equation}}
\newcommand{\beq}{\begin{eqnarray}}
\newcommand{\eeq}{\end{eqnarray}}
\newcommand{\R}{\mathbb{R}}
\newcommand{\ced}{\end{proof}}
\begin{document}
\begin{frontmatter}
\title{The Obstacle Problem for Quasilinear Stochastic PDEs with Degenerate Operator}
\date{}
\runtitle{}
\author{\fnms{Xue}
 \snm{YANG}\corref{}\ead[label=e1]{xyang2013@tju.edu.cn}}
%\thankstext{T1}{The work of the first author is supported by National Natural Science Foundation of China (11401427) }
\address{Tianjin University
\\\printead{e1}}
\author{\fnms{Jing}
 \snm{ZHANG}\corref{}\ead[label=e2]{zhang\_jing@fudan.edu.cn}}
%\thankstext{T2}{The work of the second author is supported by National Natural Science Foundation of China (11401108) and Shanghai Science and Technology Commission Grant (14PJ1401500).}
\address{Fudan University
\\\printead{e2}}

\runauthor{X. Yang and J. Zhang}

\begin{abstract}
We prove the existence and uniqueness of
solution of quasilinear stochastic partial differential equations with obstacle (OSPDEs in short) in degenerate case.
Using De Giorgi's iteration, we deduce the $L^p-$estimates for the time-space uniform norm of weak solutions.
\end{abstract}

\begin{keyword}
\kwd{stochastic partial differential equations, degenerate operator, H\"{o}rmander condition, $L^p-$estimate, De Giorgi's iteration}
\end{keyword}
\begin{keyword}[class=AMS]
\kwd[Primary ]{60H15; 35R60; 31B150}
\end{keyword}

\end{frontmatter}

\section{Introduction}

We consider the following stochastic partial differential equation (SPDE in short)
\begin{equation}
\label{SPDE}
\begin{split}
du_t(x)&=\,\partial_i\Big(\sum_{j=1}^{d}a_{ij}(x)\partial_j u_{t}(x)+\sum_{j=1}^d \sigma_{ij} g_j(t,x,u_t(x),\sigma^T\nabla u_t(x))\Big)dt\\
&+\,f(t,x,u_t(x),\sigma^T\nabla u_t(x))dt
+ \sum_{j=1}^{+\infty}h_j(t,x,u_t(x),\sigma^T\nabla u_t(x))dB^{j}_t,
\end{split}
\end{equation}
%\begin{eqnarray}
%\label{SPDE}
%du_t=div(A\nabla u)dt+div(\sigma g)dt+f(t,x,u_t,\sigma^T\nabla u_t)dt+h(t,x,u_t,\sigma^T\nabla u_t)dB_t
%\end{eqnarray}
where $a=(a_{ij})=\sigma \sigma^T$ is a degenerate symmetric bounded measurable matrix which defines a second order operator on $\mathcal{O}\subset \mathbb{R}^d$ with Dirichlet boundary condition. The initial condition is given as $u_0=\xi$, an $L^2(\mathcal{O})-$valued random
variable, and $f$, $g=(g_1,...,g_d)$ and $h=(h_1,...h_i,...)$ are non-linear random functions. The obstacle $S$ is a given real-valued process defined on $\Omega\times[0,T]\times\mathcal{O}$, and we study the obstacle problem for SPDE which means that we try to find a solution for $(\ref{SPDE})$ satisfying the condition "$u\geq S$" where the obstacle $S$ is regular in some sense.\\

The obstacle problems for SPDEs have been widely studied in recent years. Nualart and Pardoux \cite{NualartPardoux} firstly proved the existence and uniqueness of  solution for obstacle problem for heat equation driven by a space-time white noise with the diffusion matrix $a=(a_{ij})=I$, and then by Donati-Martin and Pardoux \cite{DonatiPardoux} for the general drift and diffusion coefficients.  Various properties of  the solutions were studied later in \cite{DMZ}, \cite{XuZhang}, \cite{Zhang} etc..
As backward stochastic differential equations (BSDEs in short) were rapidly developed since \cite{PardouxPeng}, \cite{PardouxPeng92}, the obstacle problem for nonlinear partial differential equations (PDEs in short) with more general coefficients and the properties of the solutions were studied via BSDEs.  In \cite{EKPPQ}, the reflected BSDE was introduced, in which there was an increasing process $K$ only having increments on the set $\{u=S\}$ to force the solution to stay above the obstacle $S$. They showed that the reflected BSDEs provided a probabilistic interpretation for the unique viscosity solution of the obstacle problem for PDEs.  Bally, El-Karoui et al. \cite{BCEF} constructed the solutions of reflected BSDEs by maximal principle but by applying the classic penalization method used in  \cite{EKPPQ} and \cite{BensoussanLions78}, and explained the relationship between reflected BSDEs and variational inequalities.\\

The purpose of this paper is two-fold. Firstly, we prove the existence and uniqueness of solution for the obstacle problem of an SPDE in which the diffusion matrix $a$ is degenerate. This work is mainly motivated by \cite{MatoussiStoica} and \cite{DMZ12} in which the diffusion matrix is uniformly elliptic. Interpreting the solution by using backward doubly stochastic differential equations, Matoussi and Stoica  \cite{MatoussiStoica} proved an existence and uniqueness result for the obstacle problem of quasilinear SPDEs on the whole space $\mathbb{R}^d$ and driven by a finite dimensional Brownian motion. In \cite{DMZ12}, Denis, Matoussi and Zhang studied the OSPDE  with null Dirichlet condition on an open domain in $\mathbb{R}^d$ and driven by an infinite dimensional Brownian motion. Their method was based on the techniques of parabolic potential theory developed by M. Pierre (\cite{Pierre}, \cite{PIERRE}). The key point was to construct a solution which admitted a quasi-continuous version defined outside a polar set and the regular measures which  are not absolutely continuous w.r.t. the Lebesgue measure in general, do not charge polar sets. In our paper, the most difficulty consisted in the degeneracy of the second order operator, which means the sequence of penalized processes will not converge in the space $L^2(\Omega\times[0,T], H^1_0(\mathcal{O}))$. However, as long as the divergence term in the range of the diffusion matrix, we are able to control the penalization sequence in some proper space, and then the existence of the solution is obtained.\\

Secondly,  we deduce the $L^p$-estimates for the uniform time space norm of weak solutions.  In \cite{DMZ13}, the authors established the $L^p$-estimates for the uniform norm of the solution and proved a maximum principle for local solutions of quasilinear stochastic PDEs with obstacle by the method of stochastic Moser's iteration scheme. Inspired by the  version of stochastic De Giorgi iteration used in \cite{Qiu}, we obtain the $L^p$-estimates for the uniform norm of the solution under suitable integrability conditions on the coefficients. Since the second order elliptic operator is degenerate, the solution is not in the second order Hilbert spaces as discussed in \cite{DMZ12}, but it follows that it is in a fractional order Hilbert space which is also a contribution of our work.\\

The paper is organized as follows: in the second section, we set the assumptions then we
introduce in the third section the notion of regular measure associated to parabolic potentials. The quasi-continuity of the solution for SPDE without obstacle  is proved in the forth section. The fifth section is devoted to proving the existence and uniqueness of solution and to establishing It\^o's formula and the comparison theorem. In the last section, we  obtain the $L^p$-estimates for the time-space uniform norm of weak solutions.

 \section{Preliminaries}

We consider a sequence $((B^i(t))_{t\geq0})_{i\in\mathbb{N}^*}$ of
independent Brownian motions defined on a standard filtered
probability space $(\Omega,\mathcal{F},(\mathcal{F}_t)_{t\geq0},P)$
satisfying the usual conditions.

Let $\mathcal{O}\subset \bbR^d$ be an open domain  and
$L^2(\mathcal{O})$ the set of square integrable functions with
respect to the Lebesgue measure on $\mathcal{O}$, and it is a Hilbert
space equipped with the usual inner product and norm as follows
$$(u,v)=\int_\mathcal{O}u(x)v(x)dx,\qquad\| u\|=(\int_\mathcal{O}u^2(x)dx)^{1/2}\,.$$
For two vector valued functions $q=(q_1,\cdots,q_n)$, $p=(p_1,\cdots,p_n)$, where $q_i, p_i\in L^2(\mathcal{O}), i=1,...,n$,
we also use the notation $(p,q):=\int_\mathcal{O} \sum_{i=1}^{n}q_i(x)p_i(x)dx$ for simplicity.
%{\red Assume the Sobolev space $$H^{1}_{0}(D):=\{f\in L(D): \frac{df}{dx}\in L^2(D)\quad \mbox{in weak sense}, u=0  \quad\mbox{on}\quad\partial D\}$$ and its dual space $H^{-1}(D)$.
%Should be replaced by the definition of Bessel potentials.}
\\Let $A$ be a symmetric second order differential operator, with domain $\mathcal{D}(A)$, given by
$$A:=-L:=-\sum_{i,j=1}^d\partial_i(a_{ij}\partial_j).$$
We assume that
$a=(a_{ij})_{1\leq i,j\leq d}$ is a measurable symmetric matrix defined on
$\mathcal{O}$ which satisfies the following degenerate elliptic condition:
$$
0\leq\sum_{ij}a_{ij}(x)\xi_i\xi_j\leq \lambda_0|\xi|^2,\ \  x\in \mathcal{O},\ \  \xi\in\mathbb{R}^d,
$$
and it has the form $a_{ij}=\sum_{k=1}^{d}\sigma_{ik}\sigma_{jk}$, where $x\rightarrow \sigma(x)=(\sigma_{ik}(x))$ is a bounded measurable map from $\mathcal{O}\subseteq\mathbb{R}^d$ to the set of $d\times d$ real matrices.

Let $(\mathfrak{F},\mathcal{E})$ be the Dirichlet form associated with the operator $A$ on $L^2(\mathcal{O})$,  defined as follows
$$
\mathcal{E}(u,u)=(a\nabla u, \nabla u  ),\ \  u\in \mathfrak{F},
$$
where $\mathfrak{F}$ the domain of Dirichlet form is the closure of $C_c^\infty(\mathcal{O})$ under the norm $\|\cdot\|^2_{\mathcal{E}_1}:=\mathcal{E}(\cdot,\cdot)+(\cdot,\cdot)$. Then $\mathfrak{F}$ is a Hilbert space with the norm $\|\cdot\|_{\mathcal{E}_1}$ and $\mathfrak{F}'$ denotes its dual space.

We consider the quasilinear stochastic partial differential equation
(\ref{SPDE}) with initial condition $u(0,\cdot)=\xi(\cdot)$ and Dirichlet
boundary condition $u(t,x)=0,\ \forall\ (t,x)\in
\bbR^+\times\partial\mathcal{O}$.

Assume that we have predictable random
functions
\begin{eqnarray*}&&f:\bbR^+\times\Omega\times\mathcal{O}\times
\bbR\times \bbR^d\rightarrow
\bbR,\\&&g=(g_1,...,g_d):\bbR^+\times\Omega\times\mathcal{O}\times \bbR\times
\bbR^d\rightarrow
\bbR^d,\\&&h=(h_1,...,h_i,...):\bbR^+\times\Omega\times\mathcal{O}\times
\bbR\times \bbR^d\rightarrow \bbR^{\mathbb{N}^*}.\end{eqnarray*}In the sequel,
$|\cdot|$ will always denote the underlying Euclidean or $l^2-$norm.
For
example$$|h(t,\omega,x,y,z)|^2=\sum_{i=1}^{+\infty}|h_i(t,\omega,x,y,z)|^2.$$

\noindent\textbf{Assumption (H):} There exist non-negative constants $C,\
\alpha,\ \beta$ such that for almost all $\omega$, the following
inequalities hold for all
$(t,x,y,z)\in\mathbb{R}^+\times\mathcal{O}\times\mathbb{R}\times\mathbb{R}^d$:
\begin{enumerate}
\item   $|f(t,\omega,x,y,z)-f(t,\omega,x,y',z')|\leq C(|y-y'|+|z-z'|),$
\item $(\sum_{i=1}^d|g_i(t,\omega,x,y,z)-g_i(t,\omega,x,y',z')|^2)^{\frac{1}{2}}\leq
C|y-y'|+\alpha|z-z'|,$
\item $|h(t,\omega,x,y,z)-h(t,\omega,x,y',z')|\leq
C|y-y'|+\beta|z-z'|,$
\item the contraction property: $2\alpha+\beta^2<1.$
\end{enumerate}
\noindent Moreover for simplicity, we fix a terminal time $T >0$, we assume that: \\[0.3cm]
 \textbf{Assumption (I):} $$\xi\in L^2(\Omega\times\mathcal{O})\ is\ an\
\mathcal{F}_0-measurable\ random\
variable$$$$f(\cdot,\cdot,\cdot,0,0):=f^0\in
L^2([0,T]\times\Omega\times\mathcal{O};\bbR)$$$$g(\cdot,\cdot,\cdot,0,0):=g^0=(g_1^0,...,g_d^0)\in
L^2([0,T]\times\Omega\times\mathcal{O};\bbR^d)$$$$h(\cdot,\cdot,\cdot,0,0):=h^0=(h_1^0,...,h_i^0,...)\in
L^2([0,T]\times\Omega\times\mathcal{O};\bbR^{\mathbb{N}^*}).$$

Now we introduce the notion of weak solution.\\
For simplicity, we fix the terminal time $T>0$. We denote by $\mathcal{H}_T$ the space of $\mathfrak{F}-$valued predictable continuous processes $(u_t)_{t\geq0}$ which satisfy
$$
E\sup_{t\in[0,T]}\|u_t\|^2+E\int_{0}^{T}\|\sigma^T \nabla u_t\|^2dt< \infty\,.
$$It is the space in which solutions exist. The space of test functions is denote by  $\mathcal{D}=\mathcal{C}%
_{c}^{\infty } (\R^+ )\otimes \mathcal{C}_c^2 (\cO )$, where $\mathcal{C}%
_{c}^{\infty } (\R^+ )$ is the space of all real valued infinitely
differentiable  functions with compact support in $\mathbb{R}^+$ and
$\mathcal{C}_c^2 (\cO )$ the set
of $C^2$-functions with compact support in $\cO$.\\
Heuristically, a pair $(u,\nu)$ is  a solution of the obstacle
problem for (\ref{SPDE}) if we have the followings:
\begin{definition}
A pair $(u,\nu)$ is said to be a solution of the obstacle problem for SPDE (\ref{SPDE}) if we have the followings:
\begin{enumerate}
\item $u\in\mathcal{H}_T$ and $u(t,x)\geq S(t,x)$, $dP\otimes dt\otimes dx-$a.e. and $u_0(x)=\xi$, $dP\otimes dx-$a.e.;
\item $\nu$ is a random measure the support of which is on $\{(t,x): u(t,x)=S_t(x)\}$;
\item the following relation holds almost surely, for all
$t\in[0,T]$ and $\forall\varphi\in\mathcal{D}$,
\begin{eqnarray*}&&\hspace{-1cm}(u_t,\varphi_t)-(\xi,\varphi_0)-\int_0^t(u_s,\partial_s\varphi_s)ds+\int_0^t(\sigma^T \nabla u_s, \sigma^{T}\nabla \varphi_s)ds+\int_0^t(\sigma^T\nabla\varphi_s, g_s(u_s,\sigma^T\nabla u_s))ds\nonumber\\
&&\hspace{-1cm}=\int_0^t(\varphi, f_s(u_s,\sigma^T\nabla u_s))ds+\sum_{j=1}^{+\infty}\int_0^t(\varphi_s, h^j_s(u_s,\sigma^T\nabla u_s))dB^j_s+\int_0^t\int_{\mathcal{O}}\varphi_s(x)\nu(dx,ds)\,;
\end{eqnarray*}
\item $$\int_0^T\int_{\mathcal{O}}(u(s,x)-S(s,x))\nu(dx,ds)=0,\ \ a.s..$$
\end{enumerate}
\end{definition}
But, the random measure which in some sense obliges the solution to stay above the barrier is a local time. Hence, in general, it is not
absolutely continuous w.r.t Lebesgue measure. As a consequence, for
example, the condition
$$\int_0^T\int_\mathcal{O}(u(s,x)-S(s,x))\nu(dxds)=0$$ makes no
sense in some way. Hence we need to consider precise version of $u$ and $S$
defined $\nu-$almost surely. 

In order to tackle this difficulty, we introduce  in the next section the notions of parabolic capacity on
$[0,T]\times\mathcal{O}$ and quasi-continuous version of functions
introduced by Michel Pierre (see for example \cite{Pierre,PIERRE}). \\
%Let us remark that these tools were also used by Klimsiak (\cite{Klimsiak}) to get a probabilistic interpretation to semilinear  PDE's with obstacle.

To end this section, we give an example to show the reason why the divergence term is in the form of $div(\sigma g)$, in the case $a=(a_{ij})$ is degenerate.

\begin{example}
We consider the 1-dimensional case. Set $\mathcal{O}=(0,\pi)$, and let the second order derivative vanished, i.e. $a=0$.
$e^k(x)=\sqrt{\frac{2}{\pi}}sin kx$ is a complete standard orthogonal basis in $L^2(0,\pi)$ and also an orthogonal basis in $H^1_0(0,\pi)$.

\vspace{2mm}
Set the coefficients in SPDE (\ref{SPDE}) as follows, $f=0 $, $g(t,x)=\sum_k \frac{1}{k} (\sqrt{\frac{2}{\pi}} coskx)$.  Since $\{\sqrt{\frac{2}{\pi}} coskx\}$ is also a standard orthogonal basis in $L^2(0,\pi)$, it follows $g\in L^2([0,T]\times\mathcal{O})$. It is obvious that  $g$ is not in the range of matrix $a=(a_{ij})=0$ in this case.

Set the function $h(\omega,t,x)\in L^2(\Omega\times[0,T]\times\mathcal{O})$, then there is the decomposition $h(\omega,t,x)=\sum_{n=1}^{\infty}h^n_t(\omega) e^n(x)$. %The white noise $dW(t,x)=\sum_{k=1}^{\infty} h^n_t dB^n_t$.

\vspace{2mm}
Suppose the vector $u^{N}:=(u^{1,N},..., u^{N,N})$ satisfies the N-dimensional stochastic differential equations, for $1\leq n\leq N$,

\begin{eqnarray*}
du^{n,N}_t&=&-(g(t,x), \partial e^n)_{L^2}dt+h^n_tdB^n_t\nonumber\\
&=&-(g(t,x), n\sqrt{\frac{2}{\pi}}cosnx)_{L^2}dt+h^n_tdB^n_t\nonumber\\
&=&-dt+h^n_tdB^n_t,
\end{eqnarray*}
where the initial condition $u^{n,N}_0=0$.

Set $w^N(t,x)=\sum_{n=1}^{N}u^{n,N}_t e^n(x)\in L^2(\Omega\times [0,T], H_{0}^{1})$,

\begin{eqnarray*}
E\|w^N(t)\|^2&=&E\int_{0}^{t}\sum_{n=1}^{N}\|h^n_s\|^2ds-2E\int_{0}^{t}\sum_{n=1}^{N}(u^{n,N}_s,1)ds.
\end{eqnarray*}
But the right hand side can  not be convergent, which means the sequence $w^N$ does not have limit. Therefore, in this extremely degenerate example, the SPDE
\begin{equation*}
du_t=\partial g(t,x)dt+\sum_{n}h^n_tdB^n_t
\end{equation*}
does not have solution even in $L^2(\Omega\times[0,T]\times(0,\pi))$, because $g$ does not equal to 0 when the diffusion matrix vanishes.  It is an example showing that the divergence term not in the range of diffusion matrix may lead to the SPDE unsolved.
\end{example}

\section{Parabolic Potential Analysis}{\label{capacity}}
\subsection{Parabolic capacity and potentials}
In this section we will recall some important definitions and
results concerning the obstacle problem for parabolic PDE in
\cite{Pierre} and \cite{PIERRE}.
\\$\mathcal{K}$ denotes $L^\infty(0,T;L^2(\mathcal{O}))\cap
L^2(0,T;\mathfrak{F})$ equipped with the norm:
\begin{eqnarray*}\parallel
v\parallel^2_\mathcal{K}&=&\parallel
v\parallel^2_{L^\infty(0,T;L^2(\mathcal{O}))}+\parallel
v\parallel^2_{L^2(0,T;\mathfrak{F})}\\
&=&\sup_{t\in[0,T[}\parallel v_t\parallel^2 +\int_0^T \mathcal{E}_1(v_t)\, dt
.\end{eqnarray*} $\mathcal{C}$ denotes the space of continuous
functions on compact support in $[0,T[\times\mathcal{O}$ and
finally:
$$\mathcal {W}=\{\varphi\in L^2(0,T;\mathfrak{F});\ \frac{\partial\varphi}{\partial t}\in
L^2(0,T;\mathfrak{F}')\}, $$ endowed with the
norm$\parallel\varphi\parallel^2_{\mathcal {W}}=\parallel
\varphi\parallel^2_{L^2(0,T;\mathfrak{F})}+\parallel\displaystyle\frac{\partial
\varphi}{\partial t}\parallel^2_{L^2(0,T;\mathfrak{F}')}$. \\
It is known (see \cite{LionsMagenes}) that $\mathcal{W}$ is
continuously embedded in $C([0,T]; L^2 (\cO))$, the set of $L^2 (\cO
)$-valued continuous functions on $[0,T]$. So without ambiguity, we
will also consider
$\mathcal{W}_T=\{\varphi\in\mathcal{W};\varphi(T)=0\}$,
$\mathcal{W}^+=\{\varphi\in\mathcal{W};\varphi\geq0\}$,
$\mathcal{W}_T^+=\mathcal{W}_T\cap\mathcal{W}^+$.\\We now introduce the notion of parabolic potentials and regular measures to define the parabolic capacity.
\begin{definition}
An element $v\in \mathcal{K}$ is said to be a parabolic potential if it satisfies:
$$ \forall\varphi\in\mathcal{W}_T^+,\
\int_0^T-(\frac{\partial\varphi_t}{\partial
t},v_t)dt+\int_0^T\mathcal{E}(\varphi_t, v_t)dt\geq0.$$
We denote by $\mathcal{P}$ the set of all parabolic potentials.
\end{definition}
\noindent The next representation property is  crucial:
\begin{proposition}(Proposition 1.1 in \cite{PIERRE})\label{presentation}
Let $v\in\mathcal{P}$, then there exists a unique positive Radon
measure on $[0,T[\times\mathcal{O}$, denoted by $\nu^v$, such that:
$$\forall\varphi\in\mathcal{W}_T\cap\mathcal{C},\ \int_0^T(-\frac{\partial\varphi_t}{\partial t},v_t)dt+\int_0^T\mathcal{E}(\varphi_t, v_t)dt=\int_0^T\int_\mathcal{O}\varphi(t,x)d\nu^v.$$
Moreover, $v$ admits a right-continuous (resp. left-continuous)
version $\hat{v} \ (\makebox{resp. } \bar{v}): [0,T]\mapsto L^2
(\cO)$ .\\
Such a Radon measure, $\nu^v$ is called {\bf a regular measure} and we write:
$$ \nu^v =\frac{\partial v}{\partial t}+Av .$$
\end{proposition}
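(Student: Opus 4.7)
The plan is a Riesz--Markov representation argument. From the definition of parabolic potential, the bilinear expression in $v$ and $\varphi$ on the left-hand side, viewed as a function of $\varphi\in\mathcal{W}_T\cap\mathcal{C}$, is a positive linear functional; one wants to extend it continuously to all of $C_c([0,T[\times\mathcal{O})$ and then invoke Riesz--Markov to produce $\nu^v$. The one-sided continuous versions are then constructed by inverting, pointwise in time, the identity $-\partial_t v+Av=\nu^v$ in $\mathcal{W}'$.

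Concretely, set
$$L_v(\varphi):=\int_0^T\Big(-\frac{\partial\varphi_t}{\partial t},v_t\Big)dt+\int_0^T\mathcal{E}(\varphi_t,v_t)\,dt,\qquad \varphi\in\mathcal{W}_T\cap\mathcal{C}.$$
The parabolic-potential hypothesis gives $L_v\geq 0$ on $\mathcal{W}_T^+\cap\mathcal{C}$. For a compact $K\subset[0,T[\times\mathcal{O}$, I would fix any nonnegative $\psi_K\in\mathcal{W}_T\cap\mathcal{C}$ with $\psi_K\geq 1$ on $K$; for $\varphi\in\mathcal{W}_T\cap\mathcal{C}$ supported in $K$ one has $\|\varphi\|_\infty\psi_K\pm\varphi\geq 0$, so by positivity
$$|L_v(\varphi)|\leq \|\varphi\|_\infty\, L_v(\psi_K).$$
Combined with the density of $\mathcal{W}_T\cap\mathcal{C}$ in $C_c([0,T[\times\mathcal{O})$ in the uniform norm (via space-time mollification), this local sup-norm bound allows $L_v$ to extend uniquely to a positive linear functional on $C_c([0,T[\times\mathcal{O})$. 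The Riesz--Markov theorem then yields the unique positive Radon measure $\nu^v$ satisfying the stated identity; uniqueness in the proposition is immediate from the uniqueness in Riesz--Markov together with the density just invoked.

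To build $\hat v$ and $\bar v$, I would exploit the identification $-\partial_t v+Av=\nu^v$ in $\mathcal{W}'$ on $[0,T[$ with $v(T^{-})=0$. For fixed $t\in[0,T]$ and $\phi\in C_c^{\infty}(\mathcal{O})$, testing the identity against smooth approximations of the characteristic functions $\mathbf{1}_{(t,T[}\otimes\phi$ and $\mathbf{1}_{[t,T[}\otimes\phi$ produces two linear forms in $\phi$ that are bounded on $L^2(\mathcal{O})$ by the $L^\infty(0,T;L^2(\mathcal{O}))$ regularity of $v$, and so define elements $\hat v(t),\bar v(t)\in L^2(\mathcal{O})$. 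Dominated/monotone convergence against $d\nu^v$ delivers the right-continuity of $t\mapsto \hat v(t)$ and the left-continuity of $t\mapsto \bar v(t)$; the two trajectories coincide outside the countable set of time-atoms of $\nu^v$, where they differ precisely by the atomic jumps.

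The main obstacle is the last step: the pointwise-in-$t$ construction of the $L^2(\mathcal{O})$-valued regular versions, together with the verification that they actually represent $v$ almost everywhere. The sup-norm bound in the first step is routine once positivity is secured, but passing from an $L^2$-a.e.\ Bochner representative to genuinely right- or left-continuous trajectories requires a careful use of the membership $v\in\mathcal{K}$ so that sectional values of $v$ make sense, plus the fact that the slices of $\nu^v$ at a given time can contribute only through atomic mass. This is the content of Pierre's original argument in \cite{Pierre,PIERRE}, on which the remainder of the paper relies.
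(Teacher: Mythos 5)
You should first be aware that the paper offers no proof of this proposition: it is quoted verbatim as Proposition~1.1 of \cite{PIERRE}, so there is no internal argument to measure yours against. Judged on its own terms, the first half of your proposal --- positivity of $L_v$ on $\mathcal{W}_T^+\cap\mathcal{C}$, the domination $|L_v(\varphi)|\le\|\varphi\|_\infty L_v(\psi_K)$ for $\varphi$ supported in a compact $K$, extension by density and mollification to $C_c([0,T[\times\mathcal{O})$, and Riesz--Markov --- is correct and is the standard route to the existence and uniqueness of $\nu^v$; the only thing to check, which is routine, is that $L_v(\psi_K)<\infty$, guaranteed by $v\in\mathcal{K}$ and $\psi_K\in\mathcal{W}_T$.

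The second half, however, contains a genuine gap and a couple of slips. The identity you invert should read $\nu^v=\frac{\partial v}{\partial t}+Av$ (up to an initial mass at $t=0$ coming from the boundary term $(\varphi_0,v_0)$), not $-\partial_t v+Av=\nu^v$, and there is no condition $v(T^-)=0$: it is the test functions, not $v$, that vanish at $T$. Consequently your approximations of $\mathbf{1}_{(t,T[}\otimes\phi$ and $\mathbf{1}_{[t,T[}\otimes\phi$ must be cut off near $T$ to stay in $\mathcal{W}_T$, and the cutoff produces a boundary contribution (essentially $(\phi,v(T^-))$) that has to be tracked, not discarded. More seriously, testing against time-indicators only shows that, for each fixed $\phi$, the scalar function $t\mapsto(v_t,\phi)$ is the sum of an absolutely continuous function and a distribution function of a signed measure, hence has one-sided limits everywhere; combined with $v\in L^\infty(0,T;L^2(\mathcal{O}))$ this yields only \emph{weak} one-sided limits $\hat v(t),\bar v(t)$ in $L^2(\mathcal{O})$. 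The proposition asserts a right-continuous map $\hat v:[0,T]\to L^2(\mathcal{O})$, i.e.\ strong $L^2$ one-sided continuity, and upgrading weak to strong is the substantive content of Pierre's argument; it relies on the energy bound $v\in L^2(0,T;\mathfrak{F})$ together with the positivity of $\nu^v$, and none of this appears in your sketch. You candidly defer this step to \cite{Pierre,PIERRE}, but as written the proposal does not prove the second assertion of the proposition, nor the identification $\hat v=v$ a.e.
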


\begin{definition}
Let $K\subset [0,T[\times\mathcal{O}$ be compact, $v\in\mathcal{P}$
is said to be  \textit{$\nu-$superior} than 1 on $K$, if there
exists a sequence $v_n\in\mathcal{P}$ with $v_n\geq1\ a.e.$ on a
neighborhood of $K$ converging to $v$ in
$L^2([0,T];\mathfrak{F})$.
\end{definition}
We denote:$$\mathscr{S}_K=\{v\in\mathcal{P};\ v\ is\ \nu-superior\
than\ 1\ on\ K\}.$$
\begin{proposition}(Proposition 2.1 in \cite{PIERRE})
Let $K\subset [0,T[\times\mathcal{O}$ be compact, then $\mathscr{S}_K$
admits a smallest $v_K\in\mathcal{P}$ and the measure $\nu^v_K$
whose support is in $K$ satisfies
$$\int_0^T\int_\mathcal{O}d\nu^v_K=\inf_{v\in\mathcal{P}}\{\int_0^T\int_\mathcal{O}d\nu^v;\ v\in\mathscr{S}_K\}.$$
\end{proposition}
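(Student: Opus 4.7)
The plan is to construct $v_K$ as the monotone limit of a minimizing sequence in $\mathscr{S}_K$, to characterize $\mathrm{supp}(\nu^{v_K})$ via a local modification argument, and to deduce the mass identity from the support property and a test-function comparison. Everything takes place within the abstract Dirichlet form framework, so the degeneracy of $a$ does not enter the argument explicitly. The cornerstone is the fact that $\mathcal{P}$, and hence $\mathscr{S}_K$, is stable under pointwise infima $\wedge$: this follows from the variational characterization of $\mathcal{P}$ combined with the Markov (truncation) property of $\mathcal{E}$.

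With the lattice property in hand, I would pick a minimizing sequence $(w_n) \subset \mathscr{S}_K$ realising $\inf_{v\in\mathscr{S}_K}\int d\nu^v$, and replace it by the decreasing sequence $v_n := w_1 \wedge \cdots \wedge w_n \in \mathscr{S}_K$. Uniform bounds on $v_n$ in $\mathcal{K}$ come from the representation $\nu^{v_n} = \partial_t v_n + A v_n$ together with an energy identity obtained by testing (after a suitable regularisation in time) against $v_n$ itself, so that the total mass $\int d\nu^{v_n}$ controls $\|v_n\|_{\mathcal{K}}$. Monotonicity produces a limit $v_K \in \mathcal{K}$, and stability of $\mathcal{P}$ and of $\mathscr{S}_K$ under monotone $L^2([0,T];\mathfrak{F})$-limits gives $v_K \in \mathscr{S}_K$; the construction as a pointwise infimum forces minimality.

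To show $\mathrm{supp}(\nu^{v_K}) \subset K$, assume for contradiction that there is an open set $U \subset [0,T[\,\times\mathcal{O}$ with $\overline{U} \cap K = \emptyset$ and $\nu^{v_K}(U) > 0$. Let $\tilde v$ be the parabolic sweep of $v_K$ past $U$: solve $\partial_t \tilde v + A \tilde v = 0$ on $U$ with lateral/initial data from $v_K$, and set $\tilde v = v_K$ off $U$. A weak comparison principle gives $\tilde v \leq v_K$, and since the modification is disjoint from $K$, $\tilde v \in \mathscr{S}_K$; but $\int d\nu^{\tilde v} < \int d\nu^{v_K}$, contradicting minimality. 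The mass identity then follows by testing $\nu^{v_K}$ and $\nu^v$ (for arbitrary $v \in \mathscr{S}_K$) against $\varphi \in \mathcal{W}_T \cap \mathcal{C}$ with $0 \leq \varphi \leq 1$ and $\varphi \equiv 1$ on a neighborhood of $K$: the support property gives $\int d\nu^{v_K} = \int \varphi\, d\nu^{v_K}$, the inequality $v_K \leq v$ combined with the weak formulation $\int\varphi\, d\nu^w = -\int(\partial_t\varphi)\, w + \int \mathcal{E}(\varphi,w)$ yields $\int \varphi\, d\nu^{v_K} \leq \int \varphi\, d\nu^v$ after an appropriate choice of $\varphi$, and $\int\varphi\, d\nu^v \leq \int d\nu^v$ is immediate.

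The hardest step will be verifying that the pointwise infimum of two elements of $\mathcal{P}$ lies in $\mathcal{P}$, because the defining variational inequality is not obviously preserved under $\wedge$; this requires a density argument in $L^2([0,T];\mathfrak{F})$ together with the Markovian contraction property of $\mathcal{E}$. A secondary subtlety is making the local sweep argument rigorous in the degenerate setting, since existence and comparison for $\partial_t + A$ on $U$ must be formulated at the variational level rather than via classical elliptic regularity; one route is to regularise by $A + \varepsilon \Delta$, run the sweep in the uniformly elliptic setting, and pass to the limit $\varepsilon \downarrow 0$ using the energy bound on $v_K$. Once the lattice property and the local sweep are in place, the remainder of the proof is essentially bookkeeping.
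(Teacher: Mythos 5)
The first thing to say is that the paper does not prove this proposition: it is quoted verbatim as Proposition 2.1 of Pierre's work \cite{PIERRE} and used as a black box, so there is no in-paper argument to compare yours against. Judged on its own terms, your outline follows the classical capacitary-potential construction (lattice property of $\mathcal{P}$, decreasing minimizing sequence, support via local modification, mass identity via test functions), which is indeed the spirit of the original proof.

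That said, two steps as written have genuine gaps. First, you build $v_K$ from a sequence minimizing the mass functional and then assert that ``the construction as a pointwise infimum forces minimality''; but the infimum of a mass-minimizing sequence is only $\leq$ the members of that particular sequence, not $\leq$ an arbitrary $v\in\mathscr{S}_K$. To obtain the smallest element you must take the essential infimum over all of $\mathscr{S}_K$ (attained along a countable subfamily by the standard exhaustion argument, exactly as in the definition of $\kappa(\psi,u_0)$ in \eqref{kappa}), and then prove separately that this element also minimizes the total mass; neither extremal property follows automatically from the other. Second, the closing inequality $\int\varphi\,d\nu^{v_K}\leq\int\varphi\,d\nu^{v}$ cannot be deduced from $v_K\leq v$ through the weak formulation, because for fixed $\varphi\geq0$ the functional $w\mapsto-\int_0^T(\partial_t\varphi,w)\,dt+\int_0^T\mathcal{E}(\varphi,w)\,dt$ is linear but not monotone in $w$; ``after an appropriate choice of $\varphi$'' is carrying the whole burden and must be replaced by a genuine argument (for instance, comparing $\nu^{v_K}$ with $\nu^{v_K\wedge v}$ and exploiting $v_K\wedge v=v_K$). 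Two smaller points: the energy identity you invoke, testing $\nu^{v_n}$ against $v_n$, yields $\int v_n\,d\nu^{v_n}$ on the right-hand side, which is controlled by the total mass only after truncating the sequence at height $1$ (legitimate via the lattice property, but it must be said, and the initial trace $\|v_n(0)\|$ must also be handled); and the definition of $\mathscr{S}_K$ here is by approximation in $L^2([0,T];\mathfrak{F})$ of potentials that are $\geq1$ only near $K$, so both the stability under $\wedge$ and the local sweep have to be carried out at the level of the approximating sequences rather than for $v_K$ itself.
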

\begin{definition}(Parabolic Capacity)\begin{itemize}
                                        \item Let $K\subset [0,T[\times\mathcal{O}$ be compact, we define
$cap(K)=\int_0^T\int_\mathcal{O}d\nu^v_K$;
                                        \item let $O\subset
[0,T[\times\mathcal{O}$ be open, we define $cap(O)=\sup\{cap(K);\
K\subset O\ compact\}$;
                                        \item   for any Borelian
$E\subset [0,T[\times\mathcal{O}$, we define $cap(E)=\inf\{cap(O);\
O\supset E\ open\}$.
                                      \end{itemize}

\end{definition}
\begin{definition}A property is said to hold quasi-everywhere (in short q.e.)
if it holds outside a set of null capacity.
\end{definition}
\begin{definition}(Quasi-continuous)

\noindent A function $u:[0,T[\times\mathcal{O}\rightarrow\mathbb{R}$
 is called quasi-continuous, if there exists a decreasing sequence of open
subsets $O_n$ of $[0,T[\times\mathcal{O}$ with: \begin{enumerate}
                        \item for all $n$, the restriction of $u_n$ to the complement of $O_n$ is
continuous;
                                       \item $\lim_{n\rightarrow+\infty}cap\;(O_n)=0$.
                                     \end{enumerate}
We say that $u$ admits a quasi-continuous version, if there exists
$\tilde{u}$ quasi-continuous  such that $\tilde{u}=u\ a.e.$.
\end{definition}

\subsection{Applications to PDE's with obstacle}
For any function $\psi:[0,T[\times\mathcal{O}\rightarrow \bbR$ and
$u_0\in L^2(\mathcal{O})$, following  M. Pierre
\cite{Pierre,PIERRE}, F. Mignot and J.P. Puel \cite{MignotPuel}, we
define\begin{equation}\label{kappa}\kappa(\psi,u_0)= ess \inf
\{u\in\mathcal{P};\ u\geq\psi\ a.e.,\ u(0)\geq
u_0\}.\end{equation}This lower bound exists and is an element in
$\mathcal{P}$. Moreover, when $\psi$ is quasi-continuous, this
potential is the solution of the following reflected
problem:\begin{eqnarray*}\kappa\in\mathcal{P},\ \kappa\geq\psi,\
\frac{\partial\kappa}{\partial t}+A\kappa=0\ on\ \{u>\psi \},\
\kappa(0)=u_0.\end{eqnarray*} Mignot and Puel have proved in
\cite{MignotPuel} that $\kappa(\psi,u_0)$ is the limit (increasingly
and weakly in $L^2([0,T];\mathfrak{F})$)
of the solutions for the following penalized equations as  $\epsilon$ tends to $0$,
\begin{eqnarray*}u_\epsilon\in\mathcal{W},\ u_\epsilon(0)=u_0,\ \frac{\partial u_\epsilon}{\partial t}+
Au_\epsilon-\frac{(u_\epsilon-\psi)^-}{\epsilon}=0.\end{eqnarray*}
Let us point out that they obtain this result in the more general
case  where $\psi$ is only measurable from $[0,T[$ into
$L^2(\mathcal{O})$.\\We end this section by a convergence lemma which plays an important role in our approach (Lemma 3.8 in \cite{PIERRE}):
\begin{lemma}\label{convergemeas}
If $v^n\in\mathcal{P}$ is a bounded sequence in $\mathcal{K}$ and
converges weakly to $v$ in $L^2([0,T];\mathfrak{F})$; if $u$ is
a quasi-continuous function and $|u|$ is bounded by an element in
$\mathcal{P}$, then
$$\lim_{n\rightarrow+\infty}\int_0^T\int_\mathcal{O}ud\nu^{v^n}=\int_0^T\int_\mathcal{O}ud\nu^{v}.$$
\end{lemma}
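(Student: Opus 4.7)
The plan is to first establish the identity when $u$ is a smooth test function, then to extend to general quasi-continuous $u$ by a capacity-based approximation argument. For $\varphi\in\mathcal{W}_T\cap\mathcal{C}$, Proposition \ref{presentation} gives
$$\int_0^T\!\!\int_\mathcal{O}\varphi\, d\nu^{v^n}=\int_0^T\!\Big(\!-\frac{\partial\varphi}{\partial t},v^n\Big)dt+\int_0^T\mathcal{E}(\varphi,v^n)\,dt.$$
Since $\partial_t\varphi\in L^2(0,T;\mathfrak{F}')$ and $\mathcal{E}(\varphi,\cdot)$ is a bounded linear functional on $L^2(0,T;\mathfrak{F})$, the right-hand side is continuous for the weak topology of $L^2(0,T;\mathfrak{F})$. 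Hence $v^n\rightharpoonup v$ directly yields $\int\varphi\,d\nu^{v^n}\to\int\varphi\,d\nu^v$, settling the lemma when $u$ is a test function.

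To upgrade to a general quasi-continuous $u$ with $|u|\le w\in\mathcal{P}$, I need two ingredients. First, a uniform capacity-control of the measures: pairing $\nu^{v^n}$ against the extremal capacitary potential $v_K\in\mathscr{S}_K$ produced by the proposition just before the definition of parabolic capacity, and using the variational inequality defining $\mathcal{P}$ to bound $\int v_K\,d\nu^{v^n}$ by a multiple of $\|v^n\|_\mathcal{K}\,\|v_K\|_{L^2(0,T;\mathfrak{F})}$, should yield
$$\sup_n\nu^{v^n}(O)\le C\,\mathrm{cap}(O)^{1/2}\qquad\text{for every open } O\subset[0,T[\times\mathcal{O},$$
with $C$ depending only on $\sup_n\|v^n\|_\mathcal{K}$. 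Second, the quasi-continuity of $u$ together with the domination $|u|\le w$ lets me construct a sequence $\varphi_k\in\mathcal{W}_T\cap\mathcal{C}$ such that $\varphi_k\to u$ uniformly outside a decreasing open set $O_k$ with $\mathrm{cap}(O_k)\downarrow 0$, while $|\varphi_k|$ remains dominated by a fixed element of $\mathcal{P}$ obtained from $w$ by truncation and a parabolic mollification.

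Putting these together, the fact that regular measures do not charge polar sets and the capacity bound above give
$$\sup_n\Big|\int(u-\varphi_k)\,d\nu^{v^n}\Big|+\Big|\int(u-\varphi_k)\,d\nu^v\Big|\longrightarrow 0\quad\text{as}\ k\to\infty,$$
and a standard three-$\varepsilon$ argument, combining this uniform approximation with the test-function convergence of the first paragraph, closes the proof. The main obstacle is precisely the capacity estimate together with the construction of the approximants $\varphi_k$: both rely on the extremal characterisation of $v_K$ and on truncation/regularisation techniques for elements of $\mathcal{P}$, and they are the exact place where the hypothesis $|u|\le w\in\mathcal{P}$ is indispensable in keeping the approximating sequence in a functional class on which $\nu^{v^n}$ and $\nu^v$ are uniformly integrable.
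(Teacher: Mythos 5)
This lemma is not proved in the paper at all: it is quoted verbatim as Lemma 3.8 of Pierre's 1980 article \cite{PIERRE}, so there is no internal argument to compare yours against. Judged on its own terms, your first step is correct and complete: for $\varphi\in\mathcal{W}_T\cap\mathcal{C}$ the representation formula of Proposition \ref{presentation} expresses $\int\varphi\,d\nu^{v^n}$ as the value at $v^n$ of a fixed continuous linear functional on $L^2(0,T;\mathfrak{F})$, so weak convergence of $v^n$ immediately gives the conclusion for test functions.

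The extension to general quasi-continuous $u$ with $|u|\le w\in\mathcal{P}$ is, however, only a programme, and the two ``ingredients'' you list are precisely the mathematical content of the lemma rather than routine steps. First, the bound $\sup_n\nu^{v^n}(O)\le C\,\mathrm{cap}(O)^{1/2}$ cannot be obtained the way you indicate: $v_K$ is not an admissible test function in Proposition \ref{presentation}, so pairing $\nu^{v^n}$ against $v_K$ already requires an extended representation formula for quasi-continuous integrands dominated by potentials --- which is essentially the statement being proved --- and the comparison $\|v_K\|^2_{L^2(0,T;\mathfrak{F})}\lesssim\mathrm{cap}(K)$ needed to land on the exponent $1/2$ is itself a nontrivial energy--capacity estimate. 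Second, and more seriously, even granting that bound, the three-$\varepsilon$ argument does not close: to control $\sup_n\bigl|\int_{O_k}(u-\varphi_k)\,d\nu^{v^n}\bigr|$ you need $\sup_n\int_{O_k}\hat{w}\,d\nu^{v^n}\to0$ as $\mathrm{cap}(O_k)\to0$, and since $w$ is in general unbounded this is \emph{not} implied by $\sup_n\nu^{v^n}(O_k)\to0$. This uniform absolute continuity of the measures $\hat{w}\,d\nu^{v^n}$ with respect to capacity is the heart of Pierre's proof (his Lemmas 3.6--3.7), and your sketch acknowledges the need for it (``uniformly integrable'') without supplying an argument. Likewise, the construction of approximants $\varphi_k\in\mathcal{W}_T\cap\mathcal{C}$ converging uniformly off sets of small capacity while staying dominated by a single element of $\mathcal{P}$ requires a Tietze-type extension and a density argument that are asserted rather than carried out. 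As it stands the proposal is a correct road map with the decisive estimates missing.
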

\begin{remark}For the more general case one can see  Lemma 3.8 in \cite{PIERRE}. \end{remark}

\section{Quasi-continuitiy of the Solution of SPDE without Obstacle}\label{quasi-contSPDE}
Under assumptions {\bf(H)} and {\bf(I)}, SPDE \eqref{SPDE} with null Dirichlet boundary condition admits a unique solution which is denoted  by $\mathcal{U}(\xi,f,g,h)$. For this result, one can refer to \cite{Qiu}, in which the existence and uniqueness is given in the linear case. Then,  the nonlinear case can be solved  by Picard iteration.

\vspace{2mm}
The following is the main  result of this section.
\begin{theorem}\label{mainquasicontinuity} Under
assumptions {\bf(H)} and {\bf (I)}, $u=\mathcal{U}(\xi,f,g,h)$ the
solution of SPDE (\ref{SPDE})  admits a quasi-continuous version
denoted by $\tilde{u}$ i.e.
 $u=\tilde{u}$ $dP\otimes dt\otimes dx -$a.e. and for almost all $w\in\Omega$,
 $(t,x)\rightarrow \tilde{u}_t
(w,x)$ is quasi-continuous.
\end{theorem}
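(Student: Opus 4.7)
The strategy is to combine a smoothing approximation with the Picard iteration that already underlies the construction of $u=\mathcal{U}(\xi,f,g,h)$, using a pathwise capacity estimate on level sets as the analytic device that promotes $L^{2}$-type convergence to quasi-uniform convergence.

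I would first treat the linear case, where $f$, $g$, $h$ are affine in $u$ and $\sigma^{T}\nabla u$. Mollifying the data $(\xi,f^{0},g^{0},h^{0})$ produces a sequence of smooth problems whose solutions $u^{n}$ are classical, in particular jointly continuous in $(t,x)$ and therefore quasi-continuous. A standard energy estimate, obtained by applying It\^o's formula to $\|u^{n}-u^{m}\|^{2}$, gives $u^{n}\to u$ in $\mathcal{H}_{T}$ and pathwise in $\mathcal{K}$ after subtracting the martingale part. The crucial step is the pathwise capacity bound
\begin{equation*}
\mathrm{cap}\bigl(\{(t,x)\in [0,T[\times \mathcal{O}:\ |u^{n}-u^{m}|(t,x)>\lambda\}\bigr)\le \frac{C}{\lambda^{2}}\,\|u^{n}-u^{m}\|_{\mathcal{K}}^{2}\qquad \mathrm{a.s.}
\end{equation*}
Following Pierre \cite{Pierre,PIERRE}, for fixed $\omega$ the stochastic integral is a known forcing and the difference $u^{n}-u^{m}$ satisfies a deterministic divergence-form equation; its positive and negative parts can then be written as elements of $\mathcal{P}$, and the capacity of $\{v>\lambda\}$ for $v\in\mathcal{P}$ is bounded by $\|v\|_{\mathcal{K}}/\lambda$ up to a universal constant. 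A Borel-Cantelli extraction then produces a subsequence that converges quasi-uniformly almost surely, whose limit is quasi-continuous.

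For the nonlinear case I would run the Picard iteration mentioned just before the theorem: set $u^{(0)}\equiv 0$ and $u^{(n+1)}=\mathcal{U}(\xi,f(\cdot,u^{(n)},\sigma^{T}\nabla u^{(n)}),g(\cdot,u^{(n)},\sigma^{T}\nabla u^{(n)}),h(\cdot,u^{(n)},\sigma^{T}\nabla u^{(n)}))$. Each iterate is quasi-continuous by the linear step. The Lipschitz hypothesis \textbf{(H)} together with the contraction $2\alpha+\beta^{2}<1$ yields geometric decay $\|u^{(n+1)}-u^{(n)}\|_{\mathcal{H}_{T}}\le \gamma^{n}$ for some $\gamma<1$. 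Applying the capacity estimate above to consecutive iterates with $\lambda_{n}=2^{-n/2}$, summing, and invoking Borel-Cantelli, one obtains almost sure quasi-uniform convergence of $(u^{(n)})$ to a limit $\tilde{u}$, which is therefore quasi-continuous and agrees with $u$ in the $dP\otimes dt\otimes dx$ sense.

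The main obstacle I expect is the justification of the capacity estimate in the degenerate setting. In the uniformly elliptic case of \cite{DMZ12} one has $\mathfrak{F}=H^{1}_{0}(\mathcal{O})$ and Pierre's machinery applies verbatim. Here, because $a$ only controls the $\sigma^{T}$-directional gradient, $\mathfrak{F}$ can be strictly larger than $H^{1}_{0}(\mathcal{O})$ and one has no control on $\partial_{i}u$ in directions outside the range of $\sigma^{T}$. What saves the argument is that $\mathcal{K}$, $\mathcal{W}$, $\mathcal{P}$ and the parabolic capacity itself were built in Section \ref{capacity} directly from the Dirichlet form $(\mathcal{E},\mathfrak{F})$ of the degenerate operator, so Proposition \ref{presentation} and Pierre's capacity bound for level sets remain applicable without modification. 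Carefully verifying that the pathwise decomposition of $u^{n}-u^{m}$ into elements of $\mathcal{P}$ can be carried out with the stochastic integral absorbed as a deterministic forcing of the right regularity will be the heart of the argument.
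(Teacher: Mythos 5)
Your overall architecture (continuous approximants, a capacity bound on the level sets of differences, Borel--Cantelli, then Picard iteration for the nonlinear case) is the same as the paper's, but there is a genuine gap at the very first step of your linear argument, and it is exactly where the degeneracy bites. You assert that mollifying the data $(\xi,f^{0},g^{0},h^{0})$ "produces a sequence of smooth problems whose solutions $u^{n}$ are classical, in particular jointly continuous in $(t,x)$." For a degenerate operator $\mathrm{div}(a\nabla\cdot)$ this is unjustified: the semigroup has no regularizing effect transverse to the range of $\sigma$, so smooth data do not yield a jointly continuous solution (and there is no Aronson-type continuous kernel to invoke, in contrast with the uniformly elliptic setting of \cite{DMZ12}). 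The paper resolves this with a second, vanishing-viscosity layer of approximation: after mollifying the data by $P_{1/n}$ it replaces the operator by $A^{m}=-\mathrm{div}(a\nabla)-\frac{1}{m}\Delta$, whose semigroup $P^{m}_{t}=Q_{t/m}P_{t}$ has a kernel smoothed by the heat semigroup $Q_{t/m}$; only the resulting $u^{n,m}$ are continuous in $(t,x)$. One must then show $u^{n,m}\to u^{n}$ as $m\to\infty$ in the right topology, which requires the explicit cancellation estimate on $\frac{1}{m+1}\Delta u^{n,m+1}-\frac{1}{m}\Delta u^{n,m}$ carried out in the paper. Your proposal has no mechanism producing continuous approximants, so the quasi-continuity of each $u^{n}$ is not established.

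A second, smaller imprecision: the capacity bound you invoke is stated with the $\mathcal{K}$-norm of $u^{n}-u^{m}$ on the right-hand side. What Pierre's machinery actually gives (Lemma \ref{cap} applied to $\kappa(\pm(u^{n}-u^{m})/\lambda,0)$) is a bound by $\|\kappa_{m}\|_{\mathcal{K}}^{2}$, which in turn is controlled by the $\mathcal{W}$-norm of the difference, i.e.\ one must also control $\partial_{t}(u^{n,m+1}-u^{n,m})$ in $L^{2}(0,T;\mathfrak{F}')$. This is why the paper proves Cauchyness in $\mathcal{W}$, not merely in $\mathcal{K}$ or $\mathcal{H}_{T}$. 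You correctly observed that the capacity, $\mathcal{P}$, $\mathcal{K}$, $\mathcal{W}$ framework is built from the degenerate Dirichlet form and transfers without modification; but that was not the main obstacle --- the construction of continuous approximants and the $\mathcal{W}$-convergence are.
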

Before giving the proof of this theorem, we need the following lemmas. The first one is proved by Lemma 3.3 in \cite{PIERRE}:
\begin{lemma}\label{cap}There exists $C>0$ such that, for all open set $\vartheta\subset [0,T[\times \mathcal{O}$ and  $v\in\mathcal{P}$ with $v\geq1\ a.e.$ on $\vartheta$: $$cap\vartheta\leq C\| v\|^2_{\mathcal{K}}.$$ \end{lemma}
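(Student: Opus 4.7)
The plan has three stages: reduce to compact subsets of $\vartheta$ via inner regularity; estimate the capacity of each compact $K$ by a self-testing of the extremal potential $v_K$; and transfer the resulting energy norm from $v_K$ back to $v$ using the penalization characterization of $v_K$.

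\textbf{Reduction to compact $K$.} By the very definition of $cap$ on open sets, $cap(\vartheta)=\sup\{cap(K):K\subset\vartheta\text{ compact}\}$, so it suffices to prove $cap(K)\le C\|v\|_\mathcal{K}^{2}$ uniformly in compact $K\subset\vartheta$. Because $\vartheta$ is an open neighborhood of $K$ on which $v\ge 1$ a.e., the constant sequence $v_n\equiv v$ witnesses $v\in\mathscr{S}_K$, and Proposition 2.1 yields a smallest element $v_K\in\mathcal{P}$ with $\operatorname{supp}\nu^{v_K}\subset K$ and, by pointwise minimality, $v_K\le v$ a.e.

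\textbf{Self-testing estimate.} I plan to apply the representation in Proposition \ref{presentation} with $v_K$ itself as a test function, after regularization. Time-mollifying $v_K$ to $v_K^{(n)}\in\mathcal{W}$ and multiplying by a cutoff $\chi_n\in C_c^\infty([0,T))$ with $\chi_n\nearrow 1$, I obtain $\chi_n v_K^{(n)}\in\mathcal{W}_T\cap\mathcal{C}$. Integration by parts in time and passage to the limit give
\begin{equation*}
\int_0^T\!\!\int_\mathcal{O} v_K\,d\nu^{v_K}=\tfrac{1}{2}\|v_K(0)\|^2+\tfrac{1}{2}\|v_K(T^-)\|^2+\int_0^T\mathcal{E}(v_K,v_K)\,dt\le\|v_K\|_\mathcal{K}^{2}.
\end{equation*}
Since $\nu^{v_K}$ is supported in $K$, and the quasi-continuous version of $v_K$ satisfies $v_K\ge 1$ on $K$ (a consequence of the defining sequence $v_n\ge 1$ on neighborhoods of $K$ together with the fact that $\nu^{v_K}$ does not charge polar sets), it follows that
\begin{equation*}
cap(K)=\int_0^T\!\!\int_\mathcal{O} d\nu^{v_K}\le\int_0^T\!\!\int_\mathcal{O} v_K\,d\nu^{v_K}\le\|v_K\|_\mathcal{K}^{2}.
\end{equation*}

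\textbf{Transfer from $v_K$ to $v$.} The pointwise inequality $0\le v_K\le v$ at once yields $\|v_K\|_{L^\infty(0,T;L^2(\mathcal{O}))}\le\|v\|_{L^\infty(0,T;L^2(\mathcal{O}))}$. For the Dirichlet-form component the plan is to invoke the penalization characterization of $v_K$ recalled in Section 3: $v_K$ is the monotone limit in $L^2(0,T;\mathfrak{F})$ of $u^\epsilon\in\mathcal{W}$ solving
\begin{equation*}
\partial_t u^\epsilon+Au^\epsilon=\epsilon^{-1}(u^\epsilon-\mathbf{1}_K)^-,\qquad u^\epsilon(0)=0.
\end{equation*}
Since $v\ge\mathbf{1}_K$ forces the penalty term evaluated at $v$ to vanish and $\nu^v\ge 0$, $v$ is a supersolution and a parabolic comparison principle gives $u^\epsilon\le v$ a.e. Subtracting the distributional equation $\partial_t v+Av=\nu^v$ from that of $u^\epsilon$, testing against $u^\epsilon-v\le 0$, and exploiting the favorable signs of both $(u^\epsilon-\mathbf{1}_K)^-(u^\epsilon-v)$ and $(v-u^\epsilon)\,d\nu^v$ yields
\begin{equation*}
\int_0^T\mathcal{E}(u^\epsilon-v,u^\epsilon-v)\,dt\le\tfrac{1}{2}\|v(0)\|^2+\int_0^T\!\!\int_\mathcal{O}(v-u^\epsilon)\,d\nu^v\le C\|v\|_\mathcal{K}^{2},
\end{equation*}
where the last inequality uses $0\le v-u^\epsilon\le v$ together with the self-testing bound $\int v\,d\nu^v\le\|v\|_\mathcal{K}^2$ obtained exactly as above (for $v$ in place of $v_K$). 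Combining with $\int\mathcal{E}(v,v)\,dt\le\|v\|_\mathcal{K}^2$ gives $\int\mathcal{E}(u^\epsilon,u^\epsilon)\,dt\le C\|v\|_\mathcal{K}^2$ uniformly in $\epsilon$, and letting $\epsilon\to 0$ completes $\|v_K\|_\mathcal{K}^2\le C\|v\|_\mathcal{K}^2$.

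\textbf{Main obstacle.} The key difficulty is the last step: the pointwise comparison $v_K\le v$ does not directly control the Dirichlet-form part of the $\mathcal{K}$ norm, which depends on gradients. The penalization-plus-comparison argument, together with the careful use of $v$ as a supersolution and the self-testing bound on $\int v\,d\nu^v$ (itself requiring regularizing $v$ to serve as an admissible test function), is precisely what bridges this gap.
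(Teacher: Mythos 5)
The paper itself offers no proof of this lemma: it is quoted directly from Lemma 3.3 of \cite{PIERRE}, so your argument can only be judged on its own terms. Your overall strategy is the natural potential-theoretic one and much of it is sound: reducing to compact $K\subset\vartheta$, noting $v\in\mathscr{S}_K$ and $v_K\le v$, the ``self-testing'' energy identity $\int\tilde v_K\,d\nu^{v_K}=\frac12\|v_K(0)\|^2+\frac12\|v_K(T^-)\|^2+\int_0^T\mathcal{E}(v_K,v_K)\,dt\le\|v_K\|^2_{\mathcal{K}}$, and the use of $\mathrm{supp}\,\nu^{v_K}\subset K$ together with $\tilde v_K\ge1$ $\nu^{v_K}$-a.e.\ on $K$ (the latter, like the admissibility of non-continuous test functions in Proposition \ref{presentation} and the nonnegativity of $v_K$, is asserted rather than proved, but these are standard facts in Pierre's framework).

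The genuine gap is in the transfer step. You characterize $v_K$ as the limit of the penalized problems $\partial_t u^\epsilon+Au^\epsilon=\epsilon^{-1}(u^\epsilon-\mathbf{1}_K)^-$, $u^\epsilon(0)=0$; but the Mignot--Puel penalization recalled in Section 3 converges to $\kappa(\mathbf{1}_K,0)$, the smallest potential dominating $\mathbf{1}_K$ \emph{almost everywhere}, whereas $v_K$ is the smallest element of $\mathscr{S}_K$, defined through potentials $\ge1$ a.e.\ on \emph{neighborhoods} of $K$. These do not coincide in general: if $K$ has Lebesgue measure zero (the typical case of interest, e.g.\ $K\subset\{t_0\}\times\mathcal{O}$), then $\kappa(\mathbf{1}_K,0)=0$ while $cap(K)$ may be positive; in general one only has $\kappa(\mathbf{1}_K,0)\le v_K$, which is the wrong direction, so the uniform bound $\int_0^T\mathcal{E}(u^\epsilon,u^\epsilon)\,dt\le C\|v\|^2_{\mathcal{K}}$ does not pass to $v_K$ and the claimed conclusion $\|v_K\|^2_{\mathcal{K}}\le C\|v\|^2_{\mathcal{K}}$ is not established. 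The repair is to penalize against $\mathbf{1}_{\vartheta'}$ for an open set $\vartheta'$ with $K\subset\vartheta'\subset\vartheta$ (your sign and comparison arguments, using $v\ge1$ a.e.\ on $\vartheta$ and $u^\epsilon\ge0$, go through verbatim), obtaining a potential $w=\kappa(\mathbf{1}_{\vartheta'},0)\in\mathscr{S}_K$ with $\|w\|^2_{\mathcal{K}}\le C\|v\|^2_{\mathcal{K}}$; then bypass the energy of $v_K$ entirely by using the minimality property of Proposition 2, $cap(K)=\int_0^T\int_\mathcal{O}d\nu^{v_K}\le\int_0^T\int_\mathcal{O}d\nu^{w}$, and applying your self-testing bound to $w$ (pairing $\nu^w$ with $\tilde v\ge1$ q.e.\ on $\vartheta$, or with $\tilde w$ itself). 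As written, the final step of your proof does not close.
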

Let $\kappa=\kappa(u,u^+(0))$ be defined by relation \eqref{kappa}. One has to note that $\kappa$ is a random function. From now on, we always take for $\kappa$ the following measurable version
$$\kappa =\sup_n v^n,$$
where  $(v^n)$ is the non-decreasing sequence of random functions given by
\begin{equation} \label{eq:1}
\left\{ \begin{split}
         &\frac{\partial v_t^n}{\partial t}=Lv_t^n+n(v_t^n-u_t)^-\\
                  & v^n_0=u^+(0).
                          \end{split} \right.
                          \end{equation}
Using the results recalled in Subsection \ref{capacity}, we know that for almost all $w\in\Omega$, $v^n (w)$ converges weakly to $v(w)=\kappa(u(w),u^+(0)(w))$ in $L^2([0,T];\mathfrak{F})$ and that $v\geq u$.
\begin{lemma}\label{estimoftau} We have the following estimate:
\begin{equation}\label{estimateoftau}E\|\kappa\|_{\mathcal{K}}^2 \, \leq C\left(E\| u_0^+\|^2+E\| u_0\|^2+E\int_0^T\| f_t^0\|^2+\||g_t^0|\|^2+\||h_t^0|\|^2dt\right),\end{equation}
where $C$ is a constant depending only on the structure constants of the equation.
\end{lemma}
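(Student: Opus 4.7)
The plan is to establish the estimate first for the penalized approximants $v^n$ uniformly in $n$, then pass to the limit. Since $(v^n)$ is non-decreasing with a.s.\ limit $\kappa$, monotone convergence gives $E\sup_t\|\kappa_t\|^2\leq\liminf_n E\sup_t\|v^n_t\|^2$, and the weak convergence $v^n(\omega)\rightharpoonup\kappa(\omega)$ in $L^2([0,T];\mathfrak{F})$ together with lower semicontinuity yields $E\int_0^T\mathcal{E}(\kappa)\,ds\leq\liminf_n E\int_0^T\mathcal{E}(v^n)\,ds$. It therefore suffices to bound $E\|v^n\|_{\mathcal K}^2$ by the right-hand side of \eqref{estimateoftau} uniformly in $n$.

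For the uniform bound, I would work with the difference $X^n:=v^n-u$. Since $v^n(\omega)$ is deterministic, $X^n$ is an $L^2(\mathcal O)$-valued semimartingale with drift $LX^n+n(X^n)^- -\partial_i(\sigma_{ij}g_j)-f$ and diffusion $-\sum_j h^j\,dB^j$, where $f,g,h$ are evaluated at $(t,\cdot,u,\sigma^T\nabla u)$, and with initial value $X^n_0=u_0^+-u_0=u_0^-$. Applying Ito's formula to $\|X^n\|^2$ in the Krylov--Rozovskii variational framework and using the sign identity $(X^n,(X^n)^-)=-\|(X^n)^-\|^2\leq 0$ yields
\begin{equation*}
d\|X^n\|^2+2\mathcal{E}(X^n)\,dt+2n\|(X^n)^-\|^2\,dt=\bigl[2(\sigma^T\nabla X^n,g)-2(X^n,f)+\|h\|^2\bigr]dt-2\sum_j(X^n,h^j)\,dB^j_t,
\end{equation*}
where the penalty term on the left is non-negative and can be discarded.

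Young's inequality then absorbs a fraction of $\mathcal{E}(X^n)$, the Lipschitz bounds of Assumption (H) control $\|f\|,\|g\|,\|h\|$ in terms of $\|f^0\|,\|g^0\|,\|h^0\|,\|u\|,\|\sigma^T\nabla u\|$, and BDG together with Gronwall produce the uniform-in-$n$ bound
\begin{equation*}
E\|X^n\|_{\mathcal K}^2\leq C\Bigl(E\|u_0\|^2+E\|u\|_{\mathcal K}^2+E\int_0^T(\|f^0\|^2+\|g^0\|^2+\|h^0\|^2)\,ds\Bigr).
\end{equation*}
The standard a priori energy estimate for $u$ itself --- obtained from Ito on $\|u_t\|^2$, with the contraction $2\alpha+\beta^2<1$ used to absorb the gradient contributions of $g$ and $h$ --- bounds $E\|u\|_{\mathcal K}^2$ by the right-hand side of \eqref{estimateoftau}. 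Combining via the triangle inequality $\|v^n\|_{\mathcal K}\leq\|X^n\|_{\mathcal K}+\|u\|_{\mathcal K}$ gives the uniform bound on $E\|v^n\|_{\mathcal K}^2$, and the passage to the limit outlined in the first paragraph then delivers \eqref{estimateoftau}.

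The main technical obstacle is the Ito step in the degenerate setting. Because $a$ is only non-negative, $\mathcal{E}$ controls only $\sigma^T\nabla X^n$ and not $\nabla X^n$, so the absorption of the cross-term coming from the divergence flux rests on the fact that this flux has the structural form $\partial_i(\sigma_{ij}g_j)$, which after integration by parts produces exactly $(\sigma^T\nabla X^n,g)$. This alignment between the noise structure and the diffusion matrix --- the same phenomenon highlighted in Example~1 --- is what allows Young's inequality to close the $\mathcal K$-estimate.
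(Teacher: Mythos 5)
Your argument is correct and is essentially the calculation the paper has in mind: the authors omit the proof and refer to Lemma~3 of \cite{DMZ12}, whose proof is exactly this scheme --- apply It\^o's formula to $\|v^n-u\|^2$, discard the non-negative penalization term $2n\|(v^n-u)^-\|^2$, absorb the divergence flux via $(\sigma^T\nabla(v^n-u),g)$ and Young's inequality, invoke the a priori bound on $u$ in $\mathcal{H}_T$, and pass to the limit by Fatou and weak lower semicontinuity. The only cosmetic point is that ``monotone convergence'' for $E\sup_t\|\kappa_t\|^2$ should be read as a.e.\ convergence of $\|v^n_t\|^2$ (monotone on the positive part, dominated on the negative part) followed by Fatou, which is standard.
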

We can do a similar calculation as in the proof of Lemma 3 in \cite{DMZ12} to get \eqref{estimateoftau}. So we omit it and come to the proof of Theorem \ref{mainquasicontinuity}. 

\vspace{2mm}
\textbf{Proof of Theorem \ref{mainquasicontinuity}:} For simplicity, we set
$$
f_t(x)=f(t,x,u_t(x),\nabla u(x)), g_t(x)=g(t,x,u_t(x),\nabla u(x)), h_t(x)=h(t,x,u_t(x),\nabla u(x)).
$$
Let $(P_t)$ be the semi-group associated to operator $div(a\nabla)$, and for each positive integer $n$, we set
$$
u^n_0=P_{\frac{1}{n}}u_0\,,\quad  f^n_t=P_{\frac{1}{n}}f_t\,,\quad  g^n_t=P_{\frac{1}{n}}g_t \quad {\rm{and}}\quad h^n_{j,t}=P_{\frac{1}{n}}h_{j,t}\,.
$$
Fixing every positive integer $n$,  we define
\begin{eqnarray*}
u^n_t=P_tu^n_0+\int_{0}^{t}P_{t-s}f^n_sds+\int_{0}^{t}P_{t-s}div({\sigma^T g^n_s})ds+\sum_{j=1}^{\infty}\int_{0}^{t}P_{t-s}h^n_{j,s}dB^j_s\,.
\end{eqnarray*}
Firstly, we prove that the process $u^n$ is P-almost quasi-continuous.

Define an approximation sequence as follows, for every positive integer $m$,
\begin{eqnarray}\label{mild u^n,m}
u^{n,m}_t=P^m_tu^n_0+\int_{0}^{t}P^m_{t-s}f^n_sds+\int_{0}^{t}P^m_{t-s}div(\sigma^T g^n_s)ds+\sum_{j=1}^{\infty}\int_{0}^{t}P^m_{t-s}h^n_{j,s}dB^j_s,
\end{eqnarray}
where $(P^m_t)$ is the semigroup associated with the operator $A^m:=-div(a\nabla)-\frac{1}{m}\Delta$.

Let $(Q_t)$ be the semigroup on $L^2(\mathcal{O})$ associated with the generator $\Delta$, whose kernel is denoted by $q(t,x,y)$. Then it is known that the semigroup $P^m_t=Q_{\frac{t}{m}}P_t$. If the kernel of $P_t$ is denoted by $p(t,x,y)$, then by Fubini's theorem,
$$
P^m_t f(x)=\int_\mathcal{O} \int_\mathcal{O} q(\frac{t}{m},x,z) p(t,z,y)f(y)dydz=\int_{\mathcal{O}}\Big(\int_\mathcal{O}  q(\frac{t}{m},x,z) p(t,z,y)dz\Big)f(y)dy,
$$
it follows that the kernel $q_m(t,x,y)=\int_\mathcal{O}  q(\frac{t}{m},x,z) p(t,z,y)dz$ is smooth in $(t,x)$ (see $\cite{Aronson}$). Therefore, the process $u^{n,m}$ is P-almost surely continuous in $(t,x)$.

We consider a sequence of random open sets
$$
\theta_m=\{|u^{n,m+1}-u^{n,m}|>\epsilon\}.
$$
Let
$$
\kappa_m:=\kappa(\frac{1}{\epsilon}(u^{n,m+1}-u^{n,m}),0)+\kappa(-\frac{1}{\epsilon}(u^{n,m+1}-u^{n,m}),0),
$$
then by \cite{PIERRE}, there exists a constant $k>0$, such that for every positive integer $m$,

 $$cap(\theta_m)\leq \|\kappa_m\|^2_{\mathcal{K}}
 \leq k\frac{1}{\epsilon^2}\|u^{n,m+1}-u^{n,m}\|_{\mathcal{W}}^2 .$$

%Consider the approximation sequence $(v^k)$, which converges weakly to $v:=\kappa(\frac{1}{\epsilon}(u^{n,m+1}-u^{n,m}),0)$ P-almost surely, satisfying the following equations
%\begin{equation}
%\left\{ \begin{split}
%         &\frac{\partial v_t^k}{\partial t}=div(A \nabla v_t^k)+k (v_t^k-\frac{1}{\epsilon}(u^{n,m+1}_t-u^{n,m}_t))^-\\
%                  & v^k_0=0.
%                          \end{split} \right.
%                          \end{equation}

Since
\begin{eqnarray*}
du^{n,m}_t=div((a+\frac{1}{m}I)\nabla u^{n,m}_t)dt+f^n_t dt+div(\sigma g^n_t)dt+\sum_{j}^{\infty}h^n_{j,t}dB^j_t,
\end{eqnarray*}
it is well known that
\begin{eqnarray*}
&&E\sup_{t\in[0,T]}\|u^{n,m}_t\|^2+E\int_{0}^{T}\mathcal{E}(u^{n,m}_s,u^{n,m}_s)ds+\frac{1}{m}E\int_{0}^{T}\|\nabla u^{n,m}_s \|^2ds\\
&\leq& C E\left[\|u_0\|^2+\int_{0}^{T}\|f^n_s\|^2+\||g^n_s|\|^2+\||h^n_s|\|^2ds\right],
\end{eqnarray*}
where the upper bound on the right hand side is independent of $m$, which means the left hand side is uniformly bounded. Therefore, there exists a constant $C_n$, for every positive integer $m$,
\begin{eqnarray}
E\sup_{t\in[0,T]}\|u^{n,m}_t\|+E\int_{0}^{T}\mathcal{E}(u^{n,m}_s,u^{n,m}_s)ds+\frac{1}{m}E\int_{0}^{T}\|\nabla u^{n,m}_s \|^2ds\leq C_n\,.
\end{eqnarray}
For simplicity, we denote $\bar{u}=u^{n,m+1}-u^{n,m}$ in the following discussion. It is obvious that $\bar{u}$ satisfies the following equation:
$$
d\bar{u}_t=div(a\nabla \bar{u}_t)dt+\frac{1}{m+1}\Delta u^{n,m+1}_t dt-\frac{1}{m}\Delta u^{n,m}_t dt.
$$
Then
\begin{eqnarray*}
\|\bar{u}_t\|^2+2\int_{0}^{t}\mathcal{E}(\bar{u}_s,\bar{u}_s)ds&=&-2\int_{0}^{t}(\nabla \frac{u^{n,m+1}_s}{m+1}-\nabla \frac{u^{n,m}_s}{m},\nabla u^{n,m+1}_s-\nabla u^{n,m}_s)\\
&=&-\frac{2}{m+1}\int_{0}^{t}\|\nabla u^{n,m+1}_s \|^2ds-\frac{2}{m}E\int_{0}^{t}\|\nabla u^{n,m}_s \|^2ds\\
&+&2(\frac{1}{m}+\frac{1}{m+1})\int_{0}^{t}(\nabla u^{n,m}_s, \nabla u^{n,m+1}_s)ds\\
&\leq& -\frac{2}{m+1}\int_{0}^{t}\|\nabla u^{n,m+1}_s \|^2ds-\frac{2}{m}\int_{0}^{t}\|\nabla u^{n,m}_s \|^2ds\\
&+&\frac{2m+1}{m(m+1)}[\int_{0}^{t}\|\nabla u^{n,m+1}_s\|^2+\|\nabla u^{n,m}_s\|^2ds]\\
&\leq&  \frac{1}{m(m+1)}\int_{0}^{t}\|\nabla u^{n,m+1}_s \|^2ds.
\end{eqnarray*}
Therefore, $E\|u^{n,m+1}-u^{n,m}\|^2_{L^2(0,T;\mathfrak{F})}\rightarrow 0$ as $m\rightarrow \infty$.

By applying operator $\Delta$ on both sides of $(\ref{mild u^n,m})$, we get
\begin{eqnarray*}
\frac{1}{m}\Delta u^{n,m}_t&=&\frac{1}{m}\Big[\Delta Q_{\frac{t}{m}}(P_tu^n_0)+\int_{0}^{t}\Delta Q_{\frac{t-s}{m}}(P_{t-s}f^n_s)ds\nonumber\\
&&+\sum_i\int_{0}^{t}\Delta Q_{\frac{t-s}{m}}(P_{t-s}\partial_i \sigma_{ij}g^n_{s})ds+\sum_{j=1}^{\infty}\int_{0}^{t}\Delta Q_{\frac{t-s}{m}}(P_{t-s}h^n_{j,s})dB^j_s\Big]\,.\nonumber
\end{eqnarray*}

Since $\|\frac{d}{dt}Q_{\frac{t}{m}}\|\rightarrow 0$ as $m\rightarrow \infty$, it follows that $\lim_{m\rightarrow \infty}\|\frac{1}{m}\Delta u^{n,m}_t\|=0$.
%For $u^{n,m}$  is the solution of the non-degenerate stochastic equations, it is know that
%$\frac{d u^{n,m}}{dt}\in  L^2(0,T;\mathcal{F}^\prime)$.
 For
\begin{eqnarray*}
\frac{d(u^{n,m}_t-u^n_t)}{dt}=div(A\nabla (u^{n,m}_t-u^n_t))+\frac{1}{m}\Delta u^{n,m}_t,
\end{eqnarray*}
and by simple calculation, $\frac{d u^{n,m}_t}{dt}$ is convergent to
$\frac{du^n_t}{dt}$ in $L^2(0,T;\mathfrak{F}^\prime)$. This concludes that the approximation sequence $\{u^{n,m},m=1,2,\cdots\}$ is a Cauchy sequence in $\mathcal{W}$, so we pick a subsequence such that $E\|u^{n,m}-u^{n,m+1}\|^2_{\mathcal{W}}<\frac{1}{2^m}$.

%$$
%\|\frac{d\bar{u}}{dt}\|^2_{ L^2(0,T;\mathcal{F}^\prime)}\leq C (\int_{0}^{T}\mathcal{E}(\bar{u}_s,\bar{u}_s)ds+\frac{1}{(m+1)^2}\int_{0}^{T}\|\nabla u^{n,m+1}_s \|^2ds
%+\frac{1}{m^2}\int_{0}^{T}\|\nabla u^{n,m+1}_s \|^2ds).
%$$
Therefore, set $\epsilon=\frac{1}{m}$,
\begin{eqnarray*}
Ecap(\bigcup_{m=p}\theta_m)\leq \sum_{m=p}Ecap(\theta_m)\leq\sum_{m=p}\frac{1}{\epsilon^2}E\|u^{n,m}-n^{n,m+1}\|^2_{\mathcal{W}}
\leq (\sum_{m=p}\frac{m^2}{2^m})\rightarrow 0, \mbox{as}\ p\rightarrow \infty.
\end{eqnarray*}

For almost all $\omega\in \Omega$, $u^{n,m}$ is continuous and converges uniformly on $(\bigcup_{m=p}\theta_m)^c$, hence $u^n $ is continuous on  $(\bigcup_{m=p}\theta_m)^c$. This means $u^n $ is P-almost quasi-continuous.

\vspace{2mm}
Secondly, we  prove the $u$ as the limit of  $u^n$ is also quasi-continuous. It follows
 a similar argument  as in the proof of Theorem 3 in \cite{DMZ12}. \hfill$\Box$

\section{Existence and Uniqueness of Solution for the Obstacle Problem }

\textbf{Assumption (O):} The obstacle $S$ is assumed to be an adapted process, quasi-continuous, such that $S_0 \leq \xi$ $P$-almost surely and
controlled by the solution of a SPDE, i.e. $\forall t\in[0,T]$,
\begin{equation}
S_t\leq S'_t\nonumber
\end{equation} where
$S'$ is the solution of the linear SPDE
\begin{equation}
\left\{\begin{array}{ccl} \label{obstacle}
 dS'_t&=&LS'_tdt+f'_tdt+div(\sigma g'_t)dt+\sum_{j=1}^{+\infty}h'_{j,t}dB^j_t\\
                  S'(0)&=&S_0 ,
\end{array}\right. \end{equation}
where $S'_0\in L^2 (\Omega\times \mathcal{O})$ is
$\mathcal{F}_0\otimes\mathcal{B}(\mathcal{O})$-measurable, $f'$, $g'$ and $h'$ are adapted
processes respectively in $L^2
([0,T]\times\Omega\times\cO;\mathbb{R})$,  $L^2
([0,T]\times\Omega\times\cO;\mathbb{R}^d)$ and $L^2
([0,T]\times\Omega\times\cO;\mathbb{R}^{\mathbb{N}^*})$.
\begin{remark}We know that $S'$ uniquely exists and satisfies the following estimate (see \cite{Qiu}):
\begin{equation}\label{estimobstacle}E\sup_{t\in[0,T]}\| S'_t\|^2+E\int_0^T\|\sigma^T\nabla S'_t\|dt\leq CE\left[\| S'_0\|^2+\int_0^T(\| f'_t\|^2+\| |g'_t|\|^2+\| |h'_t|\|^2)dt\right]\end{equation}
Moreover, from Theorem \ref{mainquasicontinuity}, $S'$ admits a
quasi-continuous version.
%Let us also remark that even if this assumption seems restrictive since $S'$ is driven by the same operator and Brownian motions as $u$, it encompasses a large class of examples.
\end{remark}
We now are able to define rigorously the notion of solution to the problem with obstacle we consider.
\begin{definition} A pair
$(u,\nu)$ is said to be a solution of the obstacle problem for
(\ref{SPDE}) if
\begin{enumerate}
    \item $u\in\mathcal{H}_T$ and $u(t,x)\geq S(t,x),\ dP\otimes dt\otimes
    dx-a.e.$ and $u_0(x)=\xi,\ dP\otimes dx-a.e.$;
    \item $\nu$ is a random regular measure defined on
    $[0,T)\times\mathcal{O}$;
    \item the following relation holds almost surely, for all
    $t\in[0,T]$ and $\forall\varphi\in\mathcal{D}$,
\begin{eqnarray}\label{solution}&&\hspace{-1cm}(u_t,\varphi_t)-(\xi,\varphi_0)-\int_0^t(u_s,\partial_s\varphi_s)ds+\int_0^t(\sigma^T \nabla u_s, \sigma^{T}\nabla \varphi_s)ds+\int_0^t(\sigma^T\nabla\varphi_s, g_s(u_s,\sigma^T\nabla u_s))ds\nonumber\\
&&\hspace{-1cm}=\int_0^t(\varphi_s, f_s(u_s,\sigma^T\nabla u_s))ds+\sum_{j=1}^{+\infty}\int_0^t(\varphi_s, h^j_s(u_s,\sigma^T\nabla u_s))dB^j_s+\int_0^t\int_{\mathcal{O}}\varphi_s(x)\nu(dx,ds)\,;
\end{eqnarray}
%     \begin{eqnarray}\label{solution}&&(u_t,\varphi_t)-(\xi,\varphi_0)-\int_0^t(u_s,\partial_s\varphi_s)ds+\int_0^t\mathcal{E}(u_s,\varphi_s)ds+\sum_{i=1}^d\int_0^t(g^i_s(u_s,\nabla u_s),\partial_i\varphi_s)ds\nonumber\\
%    &&=\int_0^t(f_s(u_s,\nabla u_s),\varphi_s)ds+\sum_{j=1}^{+\infty}\int_0^t(h^j_s(u_s,\nabla u_s),\varphi_s)dB^j_s+\int_0^t\int_{\mathcal{O}}\varphi_s(x)\nu(dx,ds)\end{eqnarray}
    \item $u$ admits a quasi-continuous version, $\tilde{u}$, and we have  $$\int_0^T\int_{\mathcal{O}}(\tilde{u}(s,x)-{S}(s,x))\nu(dx,ds)=0,\ \
    a.s..$$
  \end{enumerate}
\end{definition}
The main result of this paper is the following:
\begin{theorem}{\label{maintheo}}
Under assumptions {\bf (H)}, {\bf (I)} and {\bf (O)}, there exists a
unique weak solution of the obstacle problem for the SPDE
(\ref{SPDE}) associated to $(\xi,\ f,\ g,\ h,\ S)$.\\
We denote by $\mathcal{R}(\xi,f,g,h,S)$ the solution of SPDE
(\ref{SPDE}) with obstacle when it exists and is unique.
\end{theorem}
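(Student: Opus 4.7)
The plan is to construct the solution by penalization in the spirit of \cite{DMZ12}, adapting the energy estimates to handle the degeneracy of $a$. For each $\ep>0$ consider the SPDE \emph{without} obstacle
\[
du^{\ep}_t = Lu^{\ep}_t\,dt + f(t,u^{\ep}_t,\sigma^T\nabla u^{\ep}_t)\,dt + \mathrm{div}\bigl(\sigma\, g(t,u^{\ep}_t,\sigma^T\nabla u^{\ep}_t)\bigr)\,dt + \tfrac{1}{\ep}(u^{\ep}_t-S_t)^{-}\,dt + \sum_{j}h_{j}(t,u^{\ep}_t,\sigma^T\nabla u^{\ep}_t)\,dB^{j}_t,
\]
with $u^{\ep}_0=\xi$. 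The extra drift $\ep^{-1}(u-S)^{-}$ is Lipschitz in $u$, so \textbf{(H)}--\textbf{(I)} remain valid and Section~\ref{quasi-contSPDE} yields a unique $u^{\ep}\in\HH_{T}$ with a quasi-continuous version $\tilde u^{\ep}$. The candidate measure is $\nu^{\ep}(dx,dt):=\ep^{-1}(u^{\ep}_t(x)-S_t(x))^{-}\,dx\,dt$.

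\textbf{The $\ep$-uniform estimate (main obstacle).} Because $a$ is only nonnegative one can hope to control $\sigma^{T}\nabla u^{\ep}$ but not $\nabla u^{\ep}$; a naive Itô on $\|u^{\ep}\|^{2}$ leaves $\mathrm{div}(\sigma g)$ paired with $\nabla u^{\ep}$, which is not available. The remedy is to compare with the dominating process $S'$ of \textbf{(O)}: the process $v^{\ep}:=u^{\ep}-S'$ satisfies an SPDE whose extra forcing sits in $L^{2}$, the Dirichlet form involves only $\sigma^{T}\nabla v^{\ep}$, and using $S\leq S'$ the penalty pairing is good:
\[
\bigl(\ep^{-1}(u^{\ep}-S)^{-},\,v^{\ep}\bigr)=-\ep^{-1}\|(u^{\ep}-S)^{-}\|^{2}+\ep^{-1}\bigl((u^{\ep}-S)^{-},\,S-S'\bigr)\leq -\ep^{-1}\|(u^{\ep}-S)^{-}\|^{2}.
\]
Applying Itô's formula to $\|v^{\ep}_t\|^{2}$, absorbing gradient contributions from $f,g,h$ via the contraction hypothesis $2\alpha+\beta^{2}<1$, and invoking \eqref{estimobstacle} to control $S'$, we obtain, uniformly in $\ep$,
\[
E\sup_{t\leq T}\|u^{\ep}_t\|^{2}+E\!\int_{0}^{T}\!\|\sigma^{T}\nabla u^{\ep}_t\|^{2}\,dt+\ep^{-1}E\!\int_{0}^{T}\!\|(u^{\ep}_t-S_t)^{-}\|^{2}\,dt\leq C.
\]
Applying Lemma~\ref{estimoftau} to the parabolic potential $\kappa^{\ep}:=\kappa(u^{\ep},\xi^{+})$ then gives a uniform $\mathcal{K}$-bound on $\kappa^{\ep}$, which by Proposition~\ref{presentation} translates into a uniform total-mass bound on $\nu^{\ep}$.

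\textbf{Passage to the limit and uniqueness.} A comparison argument shows $u^{\ep}$ is monotone in $\ep^{-1}$; combined with the bound above this yields $u^{\ep}\to u$ strongly in $L^{2}(\Omega\times[0,T]\times\mathcal{O})$ and $\sigma^{T}\nabla u^{\ep}\rightharpoonup\sigma^{T}\nabla u$ weakly in $L^{2}$, so $u\in\HH_{T}$; along a subsequence, $\nu^{\ep}\rightharpoonup\nu$ to a regular measure on $[0,T)\times\mathcal{O}$. Passing to the limit in the weak formulation produces \eqref{solution}, while $\ep^{-1}E\int\|(u^{\ep}-S)^{-}\|^{2}\to 0$ forces $u\geq S$. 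For the complementarity relation we replace $u$ by the quasi-continuous version $\tilde u$ of Theorem~\ref{mainquasicontinuity} and invoke Lemma~\ref{convergemeas}:
\[
\int_{0}^{T}\!\!\int_{\mathcal{O}}(\tilde u-S)\,d\nu=\lim_{\ep\to 0}\int_{0}^{T}\!\!\int_{\mathcal{O}}(u^{\ep}-S)\,d\nu^{\ep}=-\lim_{\ep\to 0}\ep^{-1}\!\int_{0}^{T}\!\|(u^{\ep}-S)^{-}\|^{2}\,dt\leq 0;
\]
together with $\tilde u\geq S$ q.e.\ and $\nu\geq 0$, equality must hold. Uniqueness follows from an Itô formula for $\|u^{1}-u^{2}\|^{2}$ applied to two solutions (to be established alongside existence): the measure contribution $2\int(\tilde u^{1}-\tilde u^{2})(d\nu^{1}-d\nu^{2})$ is non-positive because each $\nu^{i}$ is supported on $\{\tilde u^{i}=S\}$, and Gronwall's inequality together with \textbf{(H)} closes the argument.
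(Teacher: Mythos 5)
Your overall architecture differs from the paper's in one crucial respect: you penalize the \emph{nonlinear} equation directly (penalty $\epsilon^{-1}(u^{\epsilon}-S)^{-}$ with the coefficients $f,g,h$ evaluated at $(u^{\epsilon},\sigma^{T}\nabla u^{\epsilon})$), whereas the paper first solves the \emph{linear} obstacle problem by penalization (Subsection 5.1, where $f,g,h$ do not depend on $u,\nabla u$) and only then treats the nonlinear case by a Picard/fixed-point iteration $(u^{n+1},\nu^{n+1})=\mathcal{R}(\xi,f(u^{n},\cdot),g(u^{n},\cdot),h(u^{n},\cdot),S)$, shown to contract in the weighted norm $\|\cdot\|_{\gamma,\delta}$ thanks to $2\alpha+\beta^{2}<1$. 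This is not a cosmetic difference: it is exactly what makes the limit passage work, and it is where your argument has a genuine gap. From your $\epsilon$-uniform estimate you only obtain \emph{weak} convergence of $\sigma^{T}\nabla u^{\epsilon}$ in $L^{2}$, but the terms $f(t,u^{\epsilon},\sigma^{T}\nabla u^{\epsilon})$, $g(t,u^{\epsilon},\sigma^{T}\nabla u^{\epsilon})$, $h(t,u^{\epsilon},\sigma^{T}\nabla u^{\epsilon})$ are genuinely nonlinear (Lipschitz) in the gradient variable, and weak convergence does not pass through a nonlinearity; you cannot identify their weak limits with $f(t,u,\sigma^{T}\nabla u)$, etc. In the paper's scheme the coefficients at each Picard step are frozen at the previous iterate, so the penalized equations are linear and weak gradient convergence suffices there, while the contraction estimate delivers \emph{strong} convergence of $\sigma^{T}\nabla u^{n}$ in $L^{2}(\Omega\times[0,T])$, which is what lets the Lipschitz nonlinearities pass to the limit. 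To salvage your route you would need to prove that $(u^{\epsilon})$ is Cauchy in $\mathcal{H}_{T}$ (strong gradient convergence), e.g.\ by an It\^o computation on $\|u^{\epsilon_{1}}-u^{\epsilon_{2}}\|^{2}$ controlling the mixed penalty terms; you have not done this, and the monotonicity in $\epsilon^{-1}$ you invoke gives only strong $L^{2}$ convergence of $u^{\epsilon}$ itself.

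Two smaller points. First, the identity $\int(\tilde u-S)\,d\nu=\lim_{\epsilon}\int(u^{\epsilon}-S)\,d\nu^{\epsilon}$ is not a direct application of Lemma \ref{convergemeas}, since both the integrand and the measure vary with $\epsilon$; one must split off $\int(\tilde u-\tilde u^{\epsilon})\,d\nu^{\epsilon}$ and control it separately, and one must verify that $|\tilde u-S|$ is dominated by an element of $\mathcal{P}$ (this is what Lemma \ref{estimoftau} and assumption \textbf{(O)} are for). Second, your uniqueness argument is essentially the paper's and is fine, but it presupposes the It\^o formula for the difference of two obstacle solutions (Theorem \ref{Itodifference}), which in the paper is established for the linear problem before the nonlinear existence proof; stating it as "to be established alongside existence" hides a nontrivial ingredient. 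The uniform estimate via comparison with $S'$ and the sign of the penalty pairing are correct and coincide with the paper's Lemma \ref{lemmaestim}.
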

\noindent As the proof of this theorem is quite long, we split it in several steps. Firstly we prove existence and uniqueness in the linear case, then establish  It\^o's formula and finally prove the theorem by  fixed point argument.

\subsection{Proof of Theorem \ref{maintheo} in  the  linear case }\label{subsec5.2}
All along this subsection, we assume that $f$, $g$ and $h$ do not  depend on $u$ and
$\nabla u$, so we consider that $f$, $g$ and $h$  are adapted processes respectively in $L^2 ([0,T]\times\Omega\times\cO;\mathbb{R})$,  $L^2 ([0,T]\times\Omega\times\cO;\mathbb{R}^d)$ and $L^2 ([0,T]\times\Omega\times\cO;\mathbb{R}^{{\mathbb{N}^*}})$.\\
For $n\in \mathbb{N}^*$, let $u^n$ be the solution of the
following SPDE
\begin{equation}\label{penalization}
du_t^n=Lu_t^ndt+f_tdt+div(\sigma g_t)dt+\sum_{j=1}^{+\infty}h_{j,t}dB^j_t+n(u_t^n-S_t)^-dt
\end{equation}
with initial condition $u^n_0=\xi$ and null Dirichlet boundary
condition. We know from Theorem 2.2 in \cite{Qiu} that this equation
admits a unique solution in $\mathcal{H}_T$ and that the solution
admits $L^2(\mathcal{O})-$continuous trajectories.

\begin{lemma}\label{lemmaestim}
For all $n\in\bbN^*$, $u^n$ satisfies the following
estimate:\begin{eqnarray*}
E\sup_{t\in[0,T]}\| u_t^n\|^2+E\int_0^T\|\sigma^T\nabla u_t^n\|^2dt+E\int_0^Tn\|(u_t^n-S_t)^-\|^2dt\leq
C,\end{eqnarray*}
where $C$ is a constant depending only on the structure constants of the SPDE.\end{lemma}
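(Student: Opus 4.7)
The plan is to exploit Assumption \textbf{(O)} by comparing $u^n$ with the dominating SPDE solution $S'$. Set $v^n := u^n - S'$; subtracting \eqref{obstacle} from \eqref{penalization} gives
$$dv^n_t = Lv^n_t\,dt + (f_t - f'_t)\,dt + \mathrm{div}(\sigma(g_t - g'_t))\,dt + \sum_{j=1}^{+\infty}(h_{j,t} - h'_{j,t})\,dB^j_t + n(u^n_t - S_t)^-\,dt,$$
with $v^n_0 = \xi - S'_0$. The essential point of this shift is that, while $n(u^n - S)^-$ has no usable sign when paired with $u^n$, it does when paired with $v^n$: on $\{u^n_s < S_s\}$ the assumption $S \leq S'$ yields $u^n_s - S'_s \leq u^n_s - S_s = -(u^n_s - S_s)^-$, so pointwise
$$v^n_s(x)\,(u^n_s(x) - S_s(x))^- \leq -|(u^n_s(x) - S_s(x))^-|^2,$$
while both sides vanish on the complement. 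This is the algebraic mechanism that produces the $n\|(u^n-S)^-\|^2$ term on the left-hand side of the claimed estimate.

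Apply It\^o's formula to $\|v^n_t\|^2$ in the Gelfand triple $(\mathfrak{F}, L^2(\mathcal{O}), \mathfrak{F}')$. Using $(v^n, Lv^n) = -\|\sigma^T \nabla v^n\|^2$ and the pointwise inequality above, one arrives at
$$\|v^n_t\|^2 + 2\int_0^t \|\sigma^T\nabla v^n_s\|^2\, ds + 2n\int_0^t \|(u^n_s - S_s)^-\|^2\, ds \leq \|v^n_0\|^2 + R_t + M_t,$$
where $R_t = 2\int_0^t(v^n_s, f_s - f'_s)\,ds - 2\int_0^t(\sigma^T\nabla v^n_s, g_s - g'_s)\,ds + \int_0^t \||h_s - h'_s|\|^2\,ds$ and $M_t$ is the martingale $2\sum_j\int_0^t(v^n_s, h_{j,s}-h'_{j,s})\,dB^j_s$. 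A Young inequality in the form $|2(\sigma^T\nabla v^n_s, g_s - g'_s)| \leq \|\sigma^T\nabla v^n_s\|^2 + \||g_s - g'_s|\|^2$ absorbs the gradient term on the right into the left without ever producing $\|\nabla v^n\|$. Take $E\sup_{t\leq T}$; apply Burkholder--Davis--Gundy to $M$, followed by Cauchy--Schwarz and Young, to absorb a fraction of $E\sup_t\|v^n_t\|^2$; then apply Gronwall to the residual $\int_0^t E\|v^n_s\|^2\,ds$. This yields an $n$-independent bound on $E\sup_t\|v^n_t\|^2 + E\int_0^T\|\sigma^T\nabla v^n_s\|^2\,ds + nE\int_0^T\|(u^n_s - S_s)^-\|^2\,ds$ in terms of the data norms of $\xi - S'_0$, $f-f'$, $g-g'$, $h-h'$.

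The lemma follows by the triangle inequalities $\|u^n_t\|^2 \leq 2\|v^n_t\|^2 + 2\|S'_t\|^2$ and $\|\sigma^T\nabla u^n_t\|^2 \leq 2\|\sigma^T\nabla v^n_t\|^2 + 2\|\sigma^T\nabla S'_t\|^2$, together with the a priori estimate \eqref{estimobstacle} on $S'$. The one real obstacle to overcome is the degeneracy of $a$: the standard proof for uniformly elliptic obstacle SPDEs applies It\^o to $\|u^n\|^2$ directly and then pairs the resulting $\|\nabla u^n\|^2$ against the divergence term $-(\nabla u^n, \sigma g)$; here, no such control of $\|\nabla u^n\|$ is available, and one must instead pair $\sigma^T\nabla v^n$ against $g - g'$. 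This is precisely the place where the paper's hypothesis that the divergence flux lies in the range of $\sigma$ is essential---without it, the Young's-inequality absorption that controls the gradient term would fail and the estimate would break down.
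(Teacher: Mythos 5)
Your proposal is correct and follows essentially the same route as the paper: both apply It\^o's formula to $\|u^n-S'\|^2$, use $S\leq S'$ to extract the good sign of the penalization term paired with $u^n-S'$, absorb the $\sigma^T\nabla(u^n-S')$ pairing with $g-g'$ by Young's inequality, control the martingale by Burkholder--Davis--Gundy, and conclude via the a priori estimate \eqref{estimobstacle} on $S'$. The only cosmetic difference is that you invoke Gronwall where the paper simply absorbs $\epsilon\int_0^T\|u^n_s-S'_s\|^2ds$ into $\epsilon T\sup_t\|u^n_t-S'_t\|^2$ by choosing $\epsilon$ small.
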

\begin{proof} From
(\ref{penalization}) and (\ref{obstacle}), we know that $u^n-S'$
satisfies the following equation:
$$d(u_t^n-S'_t)=L(u_t^n-S'_t)dt+\tilde{f}_tdt+div(\sigma\tilde{g})_tdt+\sum_{j=1}^{+\infty}\tilde{h}^j_tdB^j_t+n(u_t^n-S_t)^-dt,$$
where $\tilde{f}=f-f'$, $\tilde{g}=g-g'$and $\tilde{h}=h-h'$.
Applying Ito's formula, we have
\begin{eqnarray*}
&&\|u^n_t-S'_t\|^2+2\int_{0}^{t}\|\sigma^T\nabla(u^n_s-S'_s)\|^2ds\\
&=&\|\xi-S'_0\|^2+2\int_{0}^{t}(u^n_s-S'_s,\tilde f_s)ds
-2\int_{0}^{t}(\sigma^T\nabla(u^n_s-S'_s),\tilde{g}_s)ds+\int_{0}^{t}\| |\tilde{h}_s|\|^2ds\\
&+&2\int_{0}^{t}(u^n_s-S'_s,n(u^n_s-S_s)^-)ds
+2\sum_{j=1}^{+\infty}\int_{0}^{t}(u^n_s-S'_s,\tilde{h}_s^j)dB^j_s\\
&\leq&\|\xi-S'_0\|^2+\epsilon\int_{0}^{t}\| u^n_s-S'_s\|^2ds+\frac{1}{\epsilon} \int_{0}^{t}\|\tilde f_s\|^2ds
+\epsilon\int_{0}^{t}\|\sigma^T\nabla(u^n_s-S'_s)\|^2ds\\&+&\frac{1}{\epsilon} \int_{0}^{t}\|\tilde{g}\|^2ds
+\int_{0}^{t}\| |\tilde{h}_s|\|^2ds
-2\int_{0}^{t} n\|(u^n_s-S_s)^-\|^2ds+2\sum_{j=1}^{+\infty}\int_{0}^{t}(u^n_s-S'_s,\tilde{h}^j_s)dB^j_s.
\end{eqnarray*}
By Burkholder-Davies-Gundy's inequality, it follows
\begin{eqnarray*}
E\sup_{t\in[0,T]}|\sum_{j=1}^{+\infty}\int_{0}^{t}(u^n_s-S'_s,\tilde{h}^j_s)dB^j_s|
&\leq& CE[\int_{0}^{T}\sum_{j=1}^{+\infty}|(u^n_s-S'_s,\tilde{h}^j_s)|^2ds]^{\frac{1}{2}}\nonumber\\
&\leq& \epsilon E\sup_{t\in[0,T]}\|u^n_s-S'_s\|^2+\frac{C}{4\epsilon}E[\int_{0}^{T}\| |\tilde{h}_s|\|^2ds].
\end{eqnarray*}
Then
\begin{eqnarray*}
\label{UBLC}
&&\hspace{-0.5cm}(1-\epsilon-\epsilon T) E\sup_{t\in[0,T]}\|u^n_s-S'_s\|^2+(2-\epsilon)E\int_{0}^{T}\|\sigma^T\nabla(u^n_s-S'_s)\|^2ds
+2E\int_0^Tn\|(u_s^n-S_s)^-\|^2ds\nonumber\\
&&\hspace{-0.5cm}\leq\, E\|\xi-S_0\|^2_{L^2}+\frac{1}{\epsilon}E\int_{0}^{T}\|\tilde f_s\|^2+\|\tilde{g}_s\|^2ds+(1+\frac{C}{4\epsilon})E\int_{0}^{t}\|\tilde{h}_s\|^2ds.
\end{eqnarray*}
We take $\epsilon$ small enough such that $(1-\epsilon-\epsilon T)>$ and $(2-\epsilon)>0$,
so that
\begin{equation*}
E\sup_{t\in[0,T]}\|u_t^n-S'_t\|^2+E\int_0^T\|\sigma^T\nabla(u_t^n-S'_t)\|^2dt+E\int_0^Tn\|(u_t^n-S_t)^-\|^2dt\leq C.
\end{equation*}
Hence, by \eqref{estimobstacle}, we obtain the desired estimate.
%\noindent The upper bound estimate in (\ref{UBLC}) means, for any $n\in\mathbb{N}^+$,
%$$
%E\sup_{t\in[0,T]}\|u^n_s-S_s\|^2+E\int_{0}^{T}\|\sigma^T\nabla(u^n_s-S_s)\|^2_{L^2}ds
%$$
%is uniformly bounded.
\end{proof}

%Let the process $\{z_t, t\in[0,T]\}$ is solution of the following SPDE,
%\begin{eqnarray*}
%dz_t=div(A\nabla z_t)dt+f_tdt+ div(\sigma g_t)dt+\sum_{i=1}^{\infty}h_{i,t}dB^{i}_t,
%\end{eqnarray*}
%Then the process $u^n_t-S_t$ satisfies the following equations,
%\begin{eqnarray*}
%d(u^n_t-z_t)=div(A\nabla(u^n_t-z_t))dt+n(u^n_t-S_t)^-dt
%\end{eqnarray*}
%Then it follows the proof in \cite{DMZ}. The limit of $\{u^n_t,t\in[0,T]\}$ satisfies the SPDE with obstacle weakly,
%\begin{eqnarray}
%\label{obstacle linear}
%\left\{
%\begin{array}{lll}
%du_t=div(A\nabla u_t)+f(t,x)dt+div(\sigma g(t,x))dt+\sum_{i=\infty}^{\infty}h_{i}(t,x)dB^i_t;\\
%u_t\geq S_t;\\
%u_0=\xi.
%\end{array}
%\right.
%\end{eqnarray}
Furthemore, we can do a similar argument as in \cite{DMZ12} to end the proof of Theorem \ref{maintheo}.

\begin{remark} \label{remark.quasi-continuity}
	We have to point out that $u$ lies not only in $\mathcal{H}_T$, which can be shown by last lemma, as a limit of a subsequence of $\{u^n\}$.  By It\^o's formula proved in the next subsection,  we find $u\in \mathcal{K}$ a.s.. The quasi-continuity of the solution $u$ is obtained since
	it can be decomposed by two parts, $u=v+z$, where  
	$$
	dv_t=Lv_tdt+n(u_t^n-S_t)^-dt\ \ \mbox{and}\ \
	dz_t=Lz_tdt+f_tdt+div(\sigma g_t)dt+\sum_{j=1}^{+\infty}h_{j,t}dB^j_t.
	$$
	By \cite{DenisStoica},  it is known that $z\in \mathcal{K}$, a.s. and $z$ allows a quasi-continuous version $\tilde z$.  Therefore,  $\nu=\partial_t v+A v$ is a regular measure and $v$ has a quasi-continuous version $\tilde v$ (see \cite{MignotPuel}, \cite{PIERRE} ), which implies that $\tilde u=\tilde v+\tilde z$ is a quasi-continuous version of $u$.
	
\end{remark}

\subsection{It\^o's formula}
The following It\^o formula for the solution of the obstacle problem is fundamental to get all the results in the nonlinear case. Let us also remark that any solution of the nonlinear equation \eqref{SPDE} may be viewed as the solution of a linear one so that it satisfies also It\^o's formula.
\begin{theorem}\label{Itoformula}
Under assumptions of the previous subsection \ref{subsec5.2}, let $u$ be the solution of SPDE(\ref{SPDE}) with obstacle and
$\Phi:\mathbb{R}^+\times\mathbb{R}\rightarrow\mathbb{R}$ be a
function of class $\mathcal{C}^{1,2}$. We denote by $\Phi'$ and
$\Phi''$ the derivatives of $\Phi$ with respect to the space
variables and by $\frac{\partial\Phi}{\partial t}$ the partial
derivative with respect to time. We assume that there exsits a constant $C>0$, such that $|\Phi''|\leq C$, $|\frac{\partial \Phi}{\partial t}|\leq C(|x|^2 \vee 1)$,  and $\Phi'(t,0)=0$ for all $t\geq0$. Then, $P-a.s.$ for
all $t\in[0,T]$,
\begin{eqnarray*}
&&\hspace{-0.8cm}\int_\mathcal{O}\Phi(t,u_t(x))dx+\int_0^t(\sigma^T\nabla\Phi'(s,u_s),\sigma^T\nabla u_s)ds=\int_\mathcal{O}\Phi(0,\xi(x))dx+\int_0^t\int_\mathcal{O}\frac{\partial\Phi}{\partial s}(s,u_s(x))dxds\\&&\hspace{-0.8cm}+\int_0^t(\Phi'(s,u_s),f_s)ds
-\int_\mathcal{O}(\Phi''(s,u_s)\sigma^T\nabla u_s, g_s)ds+\sum_{j=1}^{+\infty}\int_0^t(\Phi'(s,u_s),h_j)dB_s^j\\&&\hspace{-0.8cm}
+\frac{1}{2}\sum_{j=1}^{+\infty}\int_0^t\int_\mathcal{O}\Phi''(s,u_s(x))(h_{j,s}(x))^2dxds+\int_0^t\int_\mathcal{O}\Phi'(s,\tilde{u}_s(x))\nu(dxds).
\end{eqnarray*}
\end{theorem}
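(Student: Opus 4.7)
The approach is the standard penalization strategy: apply the classical It\^o formula to the penalized solutions $u^n$ of \eqref{penalization} (which are solutions of ordinary linear SPDEs, no obstacle) and then pass to the limit, combining the $\mathcal{H}_T$-convergence $u^n\to u$ coming from Lemma \ref{lemmaestim} with the regular-measure convergence Lemma \ref{convergemeas}.

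First I would invoke the standard It\^o formula for linear SPDEs from \cite{Qiu} applied to $\Phi(t,u^n_t)$. The hypotheses $|\Phi''|\leq C$, $\Phi'(s,0)=0$ (so $|\Phi'(s,x)|\leq C|x|$) and $|\partial_s\Phi|\leq C(|x|^2\vee 1)$ ensure that all integrands are controlled by the $\mathcal{H}_T$-bound on $u^n$. The resulting identity coincides with the target formula, except that the term $\int_0^t\int_\mathcal{O}\Phi'(s,\tilde u_s)\nu(dx,ds)$ is replaced by the penalization contribution
$$I^n_t:=\int_0^t\int_\mathcal{O}\Phi'(s,u^n_s)\,n(u^n_s-S_s)^-\,dx\,ds.$$
Every term not involving the obstacle measure then passes to the limit by the strong convergence $u^n\to u$ in $\mathcal{H}_T$ (from the existence proof), dominated convergence, and the Burkholder--Davis--Gundy inequality for the stochastic integral.

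To identify the limit of $I^n_t$ I would decompose $u^n=z+v^n$, where $z$ solves the obstacle-free linear SPDE associated to $(\xi,f,g,h)$ (admitting a quasi-continuous version by Theorem \ref{mainquasicontinuity}) and $v^n=u^n-z$ satisfies pathwise the deterministic parabolic equation $\partial_t v^n=Lv^n+n(u^n-S)^-$ with $v^n(0)=0$. The non-negative source makes $v^n\in\mathcal{P}$ almost surely, with regular measure $d\nu^{v^n}=n(u^n_t-S_t)^-\,dx\,dt$; the uniform $\mathcal{K}$-bound from Lemma \ref{lemmaestim} lets me pass to the weak limit $v:=u-z\in\mathcal{P}$ in $L^2([0,T];\mathfrak{F})$, whose associated regular measure is precisely the $\nu$ of the obstacle solution. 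Splitting
$$I^n_t=\int_0^t\int_\mathcal{O}\Phi'(s,\tilde u_s)\,d\nu^{v^n}+\int_0^t\int_\mathcal{O}\bigl[\Phi'(s,u^n_s)-\Phi'(s,\tilde u_s)\bigr]\,d\nu^{v^n},$$
the first summand converges to $\int_0^t\int_\mathcal{O}\Phi'(s,\tilde u_s)\nu(dx,ds)$ via Lemma \ref{convergemeas}, because $\Phi'(s,\tilde u_s(x))$ is quasi-continuous (composition of the continuous $\Phi'$ with the quasi-continuous $\tilde u$ of Remark \ref{remark.quasi-continuity}) and is bounded in modulus by the parabolic potential $\tilde v+\kappa(\tilde z,z^+(0))+\kappa(-\tilde z,z^-(0))\in\mathcal{P}$, whose integrability is guaranteed by Lemma \ref{estimoftau}.

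The main obstacle is the remainder integral. Using $|\Phi'(s,u^n_s)-\Phi'(s,\tilde u_s)|\leq C|u^n_s-\tilde u_s|$ together with the uniform total-variation bound on the $\nu^{v^n}$, I would extract via Lemma \ref{cap} (applied to the differences $u^n-u^{n+p}$, which are parabolic potentials of controlled $\mathcal{K}$-norm) a subsequence along which $u^n\to\tilde u$ quasi-everywhere; since neither $\nu^{v^n}$ nor $\nu$ charges polar sets, a dominated-convergence argument with the above parabolic potential as majorant forces the cross term to vanish, exactly as in the analogous It\^o formula of \cite{DMZ12}. This establishes the formula in the linear setting; the nonlinear equation \eqref{SPDE} then follows by viewing any solution as a linear one with coefficients frozen along $(u_s,\sigma^T\nabla u_s)$.
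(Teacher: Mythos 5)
Your proposal is correct and follows essentially the same route as the paper, which simply defers to Theorems 5 and 6 of \cite{DMZ12}: apply the classical It\^o formula to the penalized solutions $u^n$, identify the penalization term as integration against the regular measures $\nu^{v^n}$ of the potentials $v^n=u^n-z$, and pass to the limit using the uniform $\mathcal{K}$-bound, Lemma \ref{convergemeas}, and the quasi-everywhere convergence/domination arguments to kill the cross term. No substantive divergence from the paper's (cited) argument.
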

We still consider $(u ,\nu)$ the solution of the linear equation as in Subsection \ref{subsec5.2}
\begin{equation*}\left\{\begin{array}{ccl}
du_t&=&div(a\nabla u_t)dt+f_tdt+ div(\sigma g_t)dt+\sum_{j=1}^{\infty}h^j_tdB^j_t+\nu(x,dt)\\
u&\geq&S
\end{array}\right. \end{equation*}
and consider another linear equation with adapted coefficients $\bar{f}$, $\bar{g}$, $\bar{h}$ respectively in $L^2 ([0,T]\times\Omega\times\cO;\mathbb{R})$,  $L^2 ([0,T]\times\Omega\times\cO;\mathbb{R}^d)$ and $L^2 ([0,T]\times\Omega\times\cO;\mathbb{R}^{{\mathbb{N}^*}})$ and obstacle $\bar{S}$ which satisfies the same hypotheses ${\bf (O)}$ as $S$ i.e; $\bar{S}_0 \leq \xi$ and $\bar{S}$ is dominated by the solution of an SPDE (not necessarily the same as $S$). We denote by $(y, \bar{\nu})$ the unique solution to the associated SPDE with obstacle with initial condition $y_0 =u_0 =\xi$.
\begin{equation*}\left\{\begin{array}{ccl}
dy_t&=&div(a\nabla y_t)dt+\bar{f}_tdt+ div(\sigma\bar{g}_t)dt+\sum_{j=1}^{\infty}\bar{h}^j_tdB^j_t+\bar{\nu}(x,dt)\\
y&\geq&\bar{S}
\end{array}\right. \end{equation*}

\begin{theorem}\label{Itodifference}Let $\Phi$ as in Theorem \ref{Itoformula}, then the difference of the two solutions satisfies the following It\^o
formula for all $t\in [0,T]$:\begin{eqnarray}\label{itodifference}&&\int_\mathcal{O}\Phi(t,u_t(x)-y_t(x))dx+\int_0^t(\sigma^T\nabla\Phi'(s,u_s-y_s),\sigma^T\nabla(u_s-y_s))ds\nonumber\\&&=\int_0^t(\Phi'(s,u_s-y_s),f_s-\bar{f}_s)ds
-\int_0^t(\Phi''(s,u_s)\sigma^T\nabla u_s,g_s-\bar{g}_s)ds\nonumber\\&&+\sum_{j=1}^{+\infty}\int_0^t(\Phi'(s,u_s-y_s),h^j_s-\bar{h}^j_s)dB^j_s+\frac{1}{2}\sum_{j=1}^{+\infty}\int_0^t\int_\mathcal{O}\Phi''(s,u_s-y_s)
(h^j_s-\bar{h}^j_s)^2dxds\nonumber\\&&+\int_0^t\int_\mathcal{O}\frac{\partial\Phi}{\partial
s}(s,u_s-y_s)dxds+\int_0^t\int_\mathcal{O}\Phi'(s,\tilde{u}_s-\tilde{y}_s)(\nu-\bar{\nu})(dx, ds),\quad a.s..
\end{eqnarray}\end{theorem}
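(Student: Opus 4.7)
The plan is to derive \eqref{itodifference} as the limit of the classical It\^o formula applied to the penalization sequences from Subsection \ref{subsec5.2}. Let $(u^n)$ and $(y^n)$ be the penalized approximations built in \eqref{penalization} for the obstacle problems associated with $(u,\nu)$ and $(y,\bar\nu)$ respectively. Setting $w^n_t := u^n_t-y^n_t$, the process $w^n$ solves the \emph{linear, unreflected} SPDE
\begin{equation*}
dw^n_t = Lw^n_t\,dt + (f_t-\bar f_t)\,dt + \mathrm{div}\bigl(\sigma(g_t-\bar g_t)\bigr)dt + \sum_{j\geq 1}(h^j_t-\bar h^j_t)\,dB^j_t + \rho^n_t\,dt,
\end{equation*}
with $w^n_0=0$ and source $\rho^n_t := n(u^n_t-S_t)^- - n(y^n_t-\bar S_t)^-\in L^2(\Omega\times[0,T]\times\mathcal O)$. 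The classical It\^o formula for such an SPDE (cf.\ Theorem~2.2 in \cite{Qiu}) applied to $\Phi(t,w^n_t)$ produces exactly the identity \eqref{itodifference}, except that the final measure term is replaced by
\begin{equation*}
I^n_t := \int_0^t\!\int_\mathcal O \Phi'(s,w^n_s(x))\,\rho^n_s(x)\,dx\,ds.
\end{equation*}

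\textbf{Passage to the limit in the regular terms.} By Lemma \ref{lemmaestim} and its analogue for $y^n$, the sequences $(u^n)$, $(y^n)$ are bounded in $\mathcal{H}_T$; standard arguments (already exploited in Subsection \ref{subsec5.2}) give strong $L^2$-convergence, along a subsequence, to $u$ and $y$, with $\sigma^T\nabla u^n\to\sigma^T\nabla u$ and $\sigma^T\nabla y^n\to\sigma^T\nabla y$ in $L^2(\Omega\times[0,T]\times\mathcal O)$. Because $|\Phi''|$ is bounded and $\Phi'(s,0)=0$, the deterministic integrals (Lebesgue, gradient-energy, and time-derivative terms), the quadratic-variation term, and the stochastic integral all converge to their counterparts in \eqref{itodifference} by dominated convergence combined with the Burkholder--Davis--Gundy inequality, exactly paralleling the derivation of Theorem \ref{Itoformula}.

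\textbf{Convergence of the measure term (main obstacle).} The delicate step is to identify
\begin{equation*}
\lim_{n\to\infty} I^n_t = \int_0^t\!\int_\mathcal O \Phi'(s,\tilde u_s-\tilde y_s)\,(\nu-\bar\nu)(dx,ds).
\end{equation*}
Split $I^n_t = I^{n,u}_t - I^{n,y}_t$ according to the two penalization terms. Following the construction of $\nu$ in the existence proof of Subsection \ref{subsec5.2}, the positive measure $n(u^n-S)^- dt\otimes dx$ is identified with the Pierre regular measure $\nu^{v^n}$ of a parabolic potential $v^n\in\mathcal P$ manufactured from $u^n$; the a priori estimate of Lemma \ref{lemmaestim} yields a uniform $\mathcal{K}$-bound on $(v^n)$, and the weak limit $v\in\mathcal P$ satisfies $\nu^v=\nu$. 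The convergence lemma \ref{convergemeas} then applies, provided one verifies that $\Phi'(s,\tilde u_s-\tilde y_s)$ is quasi-continuous and dominated in absolute value by an element of $\mathcal P$. Quasi-continuity follows from the $\mathcal C^1$ regularity of $\Phi$ together with the quasi-continuity of $\tilde u$ and $\tilde y$ (Theorem \ref{mainquasicontinuity} and Remark \ref{remark.quasi-continuity}); the Lipschitz bound $|\Phi'(s,x)|\leq C|x|$ reduces the domination to majorising $|\tilde u|+|\tilde y|$ by parabolic potentials, which is achieved by the construction used in Lemma \ref{estimoftau}. The same treatment with $\bar v^n$ disposes of $I^{n,y}_t$, and subtracting the two limits produces the integral against $\nu-\bar\nu$.

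\textbf{Where the degeneracy enters.} The entire argument must be carried out in the Dirichlet-form space $\mathfrak F$ rather than $H^1_0(\mathcal O)$. In particular, the identification of the penalization measures with $\nu^{v^n}$, the uniform $\mathcal K$-bound, and the domination of $\Phi'(s,\tilde u-\tilde y)$ by an element of $\mathcal P$ all rely only on the energy $\mathcal E$ associated with the degenerate form $a$; this is precisely the point where our proof departs from the uniformly elliptic setting of \cite{DMZ12} and is the part that requires the most care.
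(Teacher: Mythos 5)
The paper itself offers no written proof of Theorem \ref{Itodifference} --- it simply defers to Theorems 5 and 6 of \cite{DMZ12} --- so your proposal has to be judged against that standard argument. Your overall strategy (apply the classical It\^o formula to the penalized differences $w^n=u^n-y^n$, pass to the limit term by term, and treat the penalization term as a sequence of regular measures $\nu^{v^n}$ to which Pierre's convergence lemma applies) is indeed the route taken in \cite{DMZ12}, and the regular terms, the quasi-continuity/domination requirements for Lemma \ref{convergemeas}, and the remark about working in $\mathfrak F$ rather than $H^1_0$ are all handled correctly in outline.

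There is, however, a genuine gap at exactly the point you flag as delicate. Lemma \ref{convergemeas} lets you pass to the limit in $\int \phi\, d\nu^{v^n}$ only for a \emph{fixed} quasi-continuous integrand $\phi$ dominated by an element of $\mathcal P$; but in $I^{n,u}_t=\int_0^t\int_\mathcal{O}\Phi'(s,w^n_s)\,n(u^n_s-S_s)^-\,dx\,ds$ the integrand $\Phi'(s,w^n_s)$ varies with $n$ together with the measure. Your write-up silently replaces $\Phi'(s,w^n_s)$ by $\Phi'(s,\tilde u_s-\tilde y_s)$ before invoking the lemma. The correction term is
\begin{equation*}
\int_0^t\!\int_\mathcal{O}\bigl|\Phi'(s,w^n_s)-\Phi'(s,u_s-y_s)\bigr|\,n(u^n_s-S_s)^-\,dx\,ds
\;\leq\; C\,\|w^n-(u-y)\|_{L^2([0,t]\times\mathcal{O})}\,\bigl\|n(u^n-S)^-\bigr\|_{L^2([0,t]\times\mathcal{O})},
\end{equation*}
and Lemma \ref{lemmaestim} only gives $E\int_0^T n\|(u^n-S)^-\|^2\,ds\leq C$, i.e.\ $\|n(u^n-S)^-\|_{L^2}=O(\sqrt{n})$, so this product is not obviously small. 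Closing it requires the extra structure used in \cite{DMZ12} and \cite{MatoussiStoica}: the monotonicity of the penalization scheme (so that $u-u^n$ has a sign on the support of the penalization measure), the decomposition $u-u^n=(u-S)+(u^n-S)^-$ there, the complementarity condition $\int(\tilde u-S)\,d\nu=0$ together with Lemma \ref{convergemeas} applied to $u-S$, and a refinement of the a priori estimate showing that $E\int_0^T n\|(u^n-S)^-\|^2\,ds\to 0$ rather than merely staying bounded. As written, your proof of the convergence of the measure term does not go through; you should either carry out this two-step splitting explicitly or restructure the limit so that the integrand passed to Lemma \ref{convergemeas} is genuinely independent of $n$.
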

The proof the above theorems is similar to that of Theorem 5 and Theorem 6 in \cite{DMZ12}.

\subsection{Proof of Theorem 2 in  the nonlinear case}

Let us consider the Picard sequence $(u^n)_n$
defined by $u^0=\xi$ and for all $n\in\mathbb{N}^*$ we denote by $(u^{n+1},\nu^{n+1})$ the solution of the linear OSPDE 
$$(u^{n+1}, \nu^{n+1})=\mathcal{R} (\xi, f(u^n,\sigma^T\nabla u^n), g(u^n,\sigma^T\nabla u^n), h(u^n,\sigma^T\nabla u^n), S).$$
Then, by It\^o's formula (\ref{itodifference}), we have almost surely
\begin{eqnarray*}&&e^{-\gamma T}\|
u_T^{n+1}-u_T^n\|^2+\int_0^Te^{-\gamma
s}\|\sigma^T\nabla(u_s^{n+1}-u_s^n)\|^2ds=-\gamma\int_0^Te^{-\gamma
s}\| u_s^{n+1}-u_s^n\|^2ds\\&&+2\int_0^Te^{-\gamma
s}(\hat{f}_s,u_s^{n+1}-u_s^n)ds-2\int_0^Te^{-\gamma
s}(\hat{g}_s,\sigma^T\nabla(u_s^{n+1}-u_s^n))ds+\int_0^Te^{-\gamma
s}\||\hat{h}_s|\|^2ds\\&&+2\sum_{j=1}^{+\infty}\int_0^Te^{-\gamma
s}(\hat{h}^j_s,u_s^{n+1}-u_s^n)dB^j_s+2\int_0^T\int_\mathcal{O}e^{-\gamma
s}(u_s^{n+1}-u_s^n)(\nu^{n+1}-\nu^n)(dxds)\,,
\end{eqnarray*}
 where
$\hat{f}=f(u^n,\sigma^T\nabla u^n)-f(u^{n-1},\sigma^T\nabla u^{n-1})$,
$\hat{g}=g(u^n,\sigma^T\nabla u^n)-g(u^{n-1},\sigma^T\nabla u^{n-1})$ and
$\hat{h}=h(u^n,\sigma^T\nabla u^n)-h(u^{n-1},\sigma^T\nabla u^{n-1})$.
 Clearly, the last term is non-positive, so using Cauchy-Schwarz's inequality and the Lipschitz conditions on $f$, $g$ and $h$, we have
\begin{eqnarray*}
2\int_0^Te^{-\gamma s}(u_s^{n+1}-u_s^n,\hat{f}_s)ds
&\leq&\frac{1}{\epsilon}\int_0^Te^{-\gamma s}\| u_s^{n+1}-u_s^n\|^2ds+\epsilon\int_0^Te^{-\gamma s}\|\hat{f}_s\|^2ds
\\&\leq&\frac{1}{\epsilon}\int_0^Te^{-\gamma s}\| u_s^{n+1}-u_s^n\|^2ds+C\epsilon\int_0^Te^{-\gamma s}\| u_s^n-u_s^{n-1}\|^2ds\\&+&C\epsilon\int_0^Te^{-\gamma s}
\|\sigma^T\nabla(u_s^n-u_s^{n-1})\|^2ds\end{eqnarray*}
and
\begin{eqnarray*}&&2\int_0^Te^{-\gamma
s}(\hat{g}_s,\sigma^T\nabla(u_s^{n+1}-u_s^n))ds\\&\leq&2\int_0^Te^{-\gamma
s}\|\sigma^T\nabla(u_s^{n+1}-u_s^n)\|\Big(C\|
u_s^n-u_s^{n-1}\|+\alpha\|\sigma^T\nabla(u_s^n-u_s^{n-1})\|\Big)ds\\&\leq&
C\epsilon\int_0^Te^{-\gamma
s}\|\sigma^T\nabla(u_s^{n+1}-u_s^n)\|^2ds+\frac{C}{\epsilon}\int_0^Te^{-\gamma
s}\|u_s^n-u_s^{n-1}\|^2ds\\&+&\alpha\int_0^Te^{-\gamma
s}\|\sigma^T\nabla(u_s^{n+1}-u_s^n)\|^2ds+\alpha\int_0^Te^{-\gamma
s}\| \sigma^T\nabla(u_s^n-u_s^{n-1})\|^2ds\end{eqnarray*}
and
\begin{eqnarray*}
\int_0^Te^{-\gamma s}\||\hat{h}_s|\|^2ds\leq C^2(1+\frac{1}{\epsilon})\int_0^Te^{-\gamma s}\| u_s^n-u_s^{n-1}\|^2ds
+\beta^2(1+\epsilon)\int_0^Te^{-\gamma
s}\|\sigma^T\nabla(u_s^n-u_s^{n-1})\|^2ds\end{eqnarray*} where
$C$, $\alpha$ and $\beta$ are the constants in the Lipschitz
conditions. Taking expectation, we
get:
\begin{equation*}
\begin{split}
&(\gamma-\frac{1}{\epsilon})E\int_0^Te^{-\gamma s}\|u_s^{n+1}-u_s^n\|^2ds
+(1-\alpha-C\epsilon)E\int_0^Te^{-\gamma s}\|\sigma^T\nabla(u_s^{n+1}-u_s^n)\|^2ds\\&\hspace{-0.1cm}
\leq C(C+\epsilon+\frac{C+1}{\epsilon})\int_0^Te^{-\gamma s}\|u_s^n-u_s^{n-1}\|^2ds
+(C\epsilon+\alpha+\beta^2(1+\epsilon))E\int_0^Te^{-\gamma s}\|\sigma^T\nabla(u_s^n-u_s^{n-1})\|^2ds\,.\end{split}\end{equation*}
We choose $\epsilon$ small enough and then $\gamma$ such that
$$C\epsilon+\alpha+\beta^2(1+\epsilon)<1-\alpha-C\epsilon\ \ \rm{and}\ \ \frac{\gamma-1/\epsilon}{1-\alpha-C\epsilon}=\frac{C(C+\epsilon+(C+1)/\epsilon)}{C\epsilon+\alpha+\beta^2(1+\epsilon)}$$
Set $\delta=\frac{\gamma-1/\epsilon}{1-\alpha-C\epsilon}$, we define the norm on $L^2(\Omega\times[0,T];\mathfrak{F})$,
$$
\|u\|_{\gamma, \delta}=E[\int_0^T e^{-\gamma s}(\delta\|u_s\|^2+\|\sigma^T \nabla u_s\|^2)ds],
$$
which is an equivalent norm of $L^2(\Omega\times[0,T];\mathfrak{F})$.

We have the following inequality: $$\|
u^{n+1}-u^n\|_{\gamma,\delta}\leq\frac{C\epsilon+\alpha+\beta^2(1+\epsilon)}{1-\alpha-C\epsilon}\|
u^n-u^{n-1}\|_{\gamma,\delta}\leq...\leq(\frac{C\epsilon+\alpha+\beta^2(1+\epsilon)}{1-\alpha-C\epsilon})^n\|
u^1\|_{\gamma,\delta}$$ when $n\rightarrow\infty$,
$(\frac{C\epsilon+\alpha+\beta^2(1+\epsilon)}{1-\alpha-C\epsilon})^n\rightarrow0$,
we deduce that $(u^n)_n$ converges strongly to $u$ in
$L^2(\Omega\times[0,T];\mathfrak{F})$. \\Moreover, as
$(u^{n+1},\nu^{n+1})=\mathcal{R} (\xi, f(u^n,\sigma^T\nabla u^n), g(u^n,\sigma^T\nabla u^n), h(u^n,\sigma^T\nabla u^n), S)$, we have for any $\varphi\in\mathcal{D}$:
 \begin{eqnarray*}\label{weaksol}&&\hspace{-1cm}(u^{n+1}_t,\varphi_t)-(\xi,\varphi_0)-\int_0^t(u^n_s,\partial_s\varphi_s)ds+\int_0^t(\sigma^T\nabla u^{n+1}_s,\sigma^T\nabla\varphi_s)ds+\int_0^t(g_s(u_s^n,\sigma^T\nabla u_s^n),\sigma^T\nabla\varphi_s)ds\nonumber\\
&=&\int_0^t(f_s(u_s^n,\sigma^T\nabla u_s^n),\varphi_s)ds+\sum_{j=1}^{+\infty}\int_0^t(h^j_s(u_s^n,\sigma^T\nabla u_s^n),\varphi_s)dB^j_s+\int_0^t\int_{\mathcal{O}}\varphi_s(x)\nu^{n+1}(dxds),\ \ a.s..\end{eqnarray*}
Let $v^{n+1}$ the random parabolic potential associated to $\nu^{n+1} $:
$$\nu^{n+1}=\partial_t v^{n+1}+Av^{n+1}.$$ We denote
$z^{n+1}=u^{n+1}-v^{n+1}$, so
$$z^{n+1} =\mathcal{U}(\xi,f(u^n,\sigma^T\nabla u^n), g(u^n,\sigma^T\nabla u^n), h(u^n,\sigma^T\nabla u^n))$$ converges strongly to $z$
in $L^2([0,T];\mathfrak{F})$. As a consequence of the strong
convergence of $(u^{n+1})_n$, we deduce that $(v^{n+1})_n$ converges
strongly to $v$ in $L^2([0,T];\mathfrak{F})$. Therefore, for fixed $\omega$,
\begin{equation*}\begin{split}\int_0^t(-\frac{\partial_s\varphi_s}{\partial s},v_s)ds&+\int_0^t(\sigma^T\nabla\varphi_s,\sigma^T\nabla v_s)ds\\&=\lim\limits_{n\rightarrow\infty}\int_0^t(-\frac{\partial_s\varphi_s}{\partial s},v^{n+1}_s)ds+\int_0^t(\sigma^T\nabla\varphi_s,\sigma^T\nabla v^{n+1}_s)ds\geq0,\end{split}
\end{equation*}
i.e. $v(\omega)\in\mathcal{P}$. Then from Proposition \ref{presentation},
we obtain a regular measure associated with $v$, and $(\nu^{n+1})_n$
converges vaguely to $\nu$. \\Taking the limit, we obtain
\begin{eqnarray*}&&\hspace{-0.6cm}(u_t,\varphi_t)-(\xi,\varphi_0)-\int_0^t(u_s,\partial_s\varphi_s)ds+\int_0^t(\sigma^T\nabla u_s,\sigma^T\nabla\varphi_s)ds+\int_0^t(\sigma^T\nabla\varphi_s, g_s(u_s,\sigma^T\nabla u_s))ds\nonumber\\
    &&\hspace{-0.6cm}=\int_0^t(f_s(u_s,\sigma^T\nabla u_s),\varphi_s)ds+\sum_{j=1}^{+\infty}\int_0^t(h^j_s(u_s,\sigma^T\nabla u_s),\varphi_s)dB^j_s+\int_0^t\int_{\mathcal{O}}\varphi_s(x)\nu(dx,ds),\ \ a.s..\end{eqnarray*}
From the fact that $u$ and $z$ are in $\mathcal{H}_T$, we know that
$v$ is also in $\mathcal{H}_T$, by definition, $\nu$ is a random
regular measure.         $ \hfill \Box$

\subsection{Comparison theorem}

We consider $(u,\nu )=\mathcal{R} (\xi, f,g,h,S)$ the solution of the SPDE with obstacle
\begin{equation}\left\{ \begin{split}&du_t(x)=Lu_t(x)dt+f(t,x,u_t(x),\sigma^T\nabla
u_t(x))dt+div\sigma g(t,x,u_t(x),\sigma^T\nabla
u_t(x))dt\nonumber\\&\ \ \ \ \quad \quad+\sum_{j=1}^{+\infty}h_j(t,x,u_t(x),\sigma^T\nabla
u_t(x))dB^j_t +\nu (x, dt)\\
&u\geq S\ ,\  u_0=\xi ,\  \end{split}\right.\end{equation}
where we assume hypotheses {\bf (H)}, {\bf (I)} and {\bf (O)}. \\
We consider another coefficients $f'$ which satisfies the same assumptions as $f$, another obstacle $S'$ which satisfies {\bf (O)} and another initial condition $\xi'$ belonging to $L^2 (\Omega \times \cO)$ and $\mathcal{F}_0$ adapted such that $\xi'\geq S'_0$. We denote by $(u' ,\nu' )=\mathcal{R} (\xi', f',g,h,S')$.
% the solution of the SPDE with obstacle
%\begin{equation}\left\{ \begin{split}&du'_t(x)=Lu_t(x)dt+f(t,x,u'_t(x),\nabla
%u'_t(x))dt+\sum_{i=1}^d\partial_ig_i(t,x,u'_t(x),\nabla
%u'_t(x))dt\nonumber\\&\ \ \ \ \quad \quad+\sum_{j=1}^{+\infty}h_j(t,x,u'_t(x),\nabla
%u'_t(x))dB^j_t +\nu' (x, dt)\\
%&u'\geq S\ ,\  u'_0=\xi' ,\  \end{split}\right.\end{equation}
\begin{theorem} \label{comparison}Assume that the following conditions
hold\begin{enumerate}
      \item $\xi\leq\xi',\ dx\otimes dP-a.e.$
      \item $f(u,\nabla u)\leq f'(u,\nabla u),\ dt\otimes dx\otimes dP-a.e.$
      \item $S\leq S',\ dt\otimes dx\otimes dP-a.e.$
    \end{enumerate}
Then for almost all $\omega\in\Omega$, $u(t,x)\leq u'(t,x),\
q.e..$\end{theorem}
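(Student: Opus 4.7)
The approach is to apply the It\^o formula of Theorem~\ref{Itodifference} to the difference $\bar u = u - u'$ with a smooth convex approximation $\Phi_\varepsilon$ of $x \mapsto (x^+)^2$, show that every resulting term either has the favourable sign or can be absorbed into the Dirichlet form on the left using the contraction condition $2\alpha + \beta^2 < 1$, and then close by Gronwall's lemma.

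More precisely, I would pick $\Phi_\varepsilon \in C^2(\mathbb{R})$ convex with $\Phi_\varepsilon(x)=0$ for $x\leq 0$, $\Phi_\varepsilon(x) \uparrow (x^+)^2$ and $\Phi_\varepsilon''(x) \to 2\mathbf{1}_{\{x>0\}}$ as $\varepsilon \downarrow 0$, together with $\Phi_\varepsilon'(0)=0$ and $|\Phi_\varepsilon''|$ uniformly bounded, so that $\Phi_\varepsilon$ fits the hypotheses of Theorem~\ref{Itodifference}. Viewing each equation as a linear OSPDE with its nonlinear coefficients frozen along the corresponding solution, I would split the drift difference as
\[
\hat f_s = \bigl[f_s(u_s,\sigma^T\nabla u_s)-f_s(u'_s,\sigma^T\nabla u'_s)\bigr] + \bigl[f_s(u'_s,\sigma^T\nabla u'_s)-f'_s(u'_s,\sigma^T\nabla u'_s)\bigr].
\]
The first bracket is bounded by $C(|\bar u_s|+|\sigma^T\nabla\bar u_s|)$ by $\mathbf{(H)}$; the second is pointwise $\leq 0$ by hypothesis~2 of the theorem, and since $\Phi_\varepsilon'(\bar u_s) \geq 0$, its contribution to $\int_0^t(\Phi_\varepsilon'(\bar u_s),\hat f_s)\,ds$ is non-positive. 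Similarly the $g$ and $h$ differences obey $|\hat g_s| \leq C|\bar u_s|+\alpha|\sigma^T\nabla\bar u_s|$ and $|\hat h_s| \leq C|\bar u_s|+\beta|\sigma^T\nabla\bar u_s|$; the gradient pieces are absorbed into the Dirichlet term on the left via $2\alpha+\beta^2<1$, exactly as in the contraction argument of Section~5.3.

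The crucial step is that the random-measure term is non-positive. Since $\Phi_\varepsilon'(x)=0$ for $x\leq 0$, the $\nu$-piece $\int_0^t\int_\mathcal{O}\Phi_\varepsilon'(\tilde u_s-\tilde u'_s)\nu(dx,ds)$ vanishes: indeed, the obstacle relation $\int(\tilde u-S)\,d\nu=0$ together with $\tilde u\geq S$ q.e.\ forces $\tilde u=S$ $\nu$-a.e., and combining with the q.e.\ inequalities $S\leq S'\leq \tilde u'$ (valid because $\tilde u'$ is quasi-continuous by Theorem~\ref{mainquasicontinuity} and Remark~\ref{remark.quasi-continuity}, and because regular measures do not charge polar sets) yields $\tilde u-\tilde u'\leq 0$ $\nu$-a.e., hence $\Phi_\varepsilon'(\tilde u-\tilde u')=0$ $\nu$-a.e. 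The $\nu'$-piece is $-\int\Phi_\varepsilon'(\bar u)\,d\nu'\leq 0$ because the integrand is non-negative and $\nu'\geq 0$.

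Assembling everything, taking expectation, using $(\xi-\xi')^+=0$ from hypothesis~1, and absorbing the $\mathbf{1}_{\{\bar u>0\}}|\sigma^T\nabla\bar u|^2$ terms from $\hat g$ and $\hat h$ into the left-hand Dirichlet form via $2\alpha+\beta^2<1$, I would arrive at
\[
E\|(u_t-u'_t)^+\|^2 \leq C\int_0^t E\|(u_s-u'_s)^+\|^2\,ds,
\]
and Gronwall's lemma forces $(u_t-u'_t)^+=0$ for all $t$, $dP\otimes dx$-a.e.; the quasi-continuity of $\tilde u$ and $\tilde u'$ then upgrades this to $\tilde u\leq \tilde u'$ quasi-everywhere. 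The main obstacle will be rigorously justifying the identification $\tilde u-\tilde u'\leq 0$ $\nu$-a.e.: it demands that all four of $\tilde u$, $S$, $S'$, $\tilde u'$ be handled through their quasi-continuous versions simultaneously and that the regular measure $\nu$ be shown not to charge the exceptional polar sets where these versions may differ, which is where the parabolic-potential machinery of Section~\ref{capacity} is essential.
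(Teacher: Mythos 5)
Your proposal is correct and is precisely the argument the paper has in mind: the paper gives no details for Theorem \ref{comparison}, stating only that once the It\^o formula for the difference of two solutions (Theorem \ref{Itodifference}) is available the proof is ``more or less classic'' and deferring to \cite{DMZ12}, where exactly this route --- a $C^2$ convex approximation of $x\mapsto (x^+)^2$, the sign analysis of the $\nu$- and $\nu'$-terms via the support property and the q.e.\ chain $\tilde u=S\leq S'\leq \tilde u'$ on $\mathrm{supp}\,\nu$, absorption of the gradient contributions through $2\alpha+\beta^2<1$, and Gronwall --- is carried out. One minor adjustment: hypothesis~2 gives the sign of $f-f'$ at $(u,\sigma^T\nabla u)$, not at $(u',\sigma^T\nabla u')$, so the drift difference should be split as $[f_s(u_s,\sigma^T\nabla u_s)-f'_s(u_s,\sigma^T\nabla u_s)]+[f'_s(u_s,\sigma^T\nabla u_s)-f'_s(u'_s,\sigma^T\nabla u'_s)]$, invoking the Lipschitz property of $f'$ (which satisfies the same assumptions as $f$) for the second bracket; with that reversal your argument goes through unchanged.
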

As we have It\^o's formula for the difference between the solutions of two OSPDEs, the proof of the comparison theorem is more or less classic. One can refer to \cite{DMZ12}.

%{\red We need the following result in order that we can get Corollary 4.4 by Lemma 4.3:
%Corollary 3.5: Lipschitz condition+Integrability+H\"{o}rmander condition, for the unique solution of OSPDE, one has further $u\in\mathcal{L}^2(H^{m+\eta})$ with
%\begin{equation}
%E\int_0^T\|u(s)\|^2_{m+\eta}ds\leq C\left\{E\|\xi\|^2+E\int_0^T\left(\|f^0_s\|^2+\|g^0_s\|^2+\|h^0_s\|^2\right)\right\}
%\end{equation}}

\section{$L^p$-estimates of the weak solution}
In this section, we  assume further the H\"{o}rmander condition is satisfied.  By the method of De Giorgi iteration, we obtain the $L^p$-estimates on the weak solutions for the  linear OSPDEs with higher integrability of the parameters. Then the maximum principle can be obtained. For simplicity, we consider the linear case, i.e. the coefficients $f,g$ and $h$ do not depend on $u$ and $\nabla u$.

\vspace{2mm}
Sobolev spaces with fractional orders will be use in this section. For $s\in \mathbb{R}$, we denote by $H^s(\mathbb{R}^d):=(I-\Delta)^{-\frac{s}{2}}L^2(\mathbb{R}^d)$ the Bessel potential space with the norm
$$
\|f\|_{H^s(\mathbb{R}^d)}:=\|(I-\Delta)^{\frac{s}{2}}f\|_{L^2},\ \  f\in H^s.
$$
Let $H^s(\mathcal{O})$ be the restriction of $H^s(\mathbb{R}^d)$  on $\mathcal{O}\subset \mathbb{R}^d$, and for $f\in H^s(\mathcal{O})$, define
$$
\|f\|_{H^s(\mathcal{O})}=\inf\{\|g\|_{H^s(\mathbb{R}^d)}: g\in H^s(\mathbb{R}^d), g|_{\mathcal{O}}=f\}.
$$
If $s$ is an integer, the Bessel potential space coincides (with equivalence of norms)  with  the Sobolev space with integer order.

Since Sobolev embedding inequality will be used in this section, we assume the domain $\mathcal{O}$ has smooth boundary.

Define $BC^{\infty}_{b}$ as the set of real-valued measurable function $f$ on $\Omega\times[0,T]\times\mathcal{O}$, such that for each $\omega\in \Omega, t\in[0,T]$, the function $f(\omega,t,x)$ is infinite differentiable with respect to $x$, and all the derivatives of any order belongs to $L^{\infty}([0,T]\times\mathcal{O})$.

Recall the diffusion matrix $a=(a_{ij})$ where $a_{ij}=\sum_{k=1}^{d}\sigma_{ik}\sigma_{jk}$. We assume the first order differential operators
$$
L_k=\sum_{i=1}^{d} \sigma_{ik}\partial_i,\ \  k=1,...,n.
$$
Set
$$
\mathbb{L}_0=\{L_1,...,L_n\}\quad \mbox{and} \quad \mathbb{L}_{n+1}=\mathbb{L}_{n}\cup\{[L_k, M]: M\in \mathbb{L}_{n},  k=1,...,n\},
$$
where $[L_k, M]=L_kM-ML_k$. Denote by $Lie_n$ the set of linear combinations of elements of $\mathbb{L}_n$ with coefficients of $BC^{\infty}_b$.

\vspace{3mm}
We assume that the following H\"{o}rmander-type condition is satisfied in this section.

\vspace{3mm}
\textbf{(HA)} There exists a non-negative integer $n_0 $ such that $\{\frac{\partial}{\partial x_i},i=1,\cdots,d\}\in Lie_{n_0}$, and we assume,  if $d=1$,  $n_0\geq 2$; if $d=2$, $n_0\geq 1$; if $d\geq 3$, $n_0\geq 0$.

\vspace{2mm}
The following lemma is a stochastic version of Lemma 4.2 in \cite{Krylov}.
\begin{lemma}
\label{hormander lemma}
 For $\{L,K\}\subset\cup_{l\geq 0} \mathbb{L}_l$ and $\epsilon\in[0,1]$, there exists a constant $C>0$ such that almost surely for any $f \in H^0$ with $Lf\in H^{-1+\epsilon}$ and $Kf\in H^0$, it holds that
$$
\|[L,K]f\|_{H^{-1+\frac{\epsilon}{2}}}\leq C(\|Lf\|_{H^{-1+\epsilon}}+\|Kf\|_{H^{0}}+\|f\|_{H^0}).
$$
\end{lemma}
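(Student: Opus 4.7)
The plan is to adapt the deterministic proof of Krylov's Lemma 4.2 to our setting. Since $L$ and $K$ are built from the vector fields $L_1, \ldots, L_n$ by iterated Lie brackets, with coefficients in the class $BC^\infty_b$, their $L^\infty$ norms and those of their derivatives are bounded uniformly in $\omega$; consequently all constants in the proof may be chosen independent of $\omega$, and the almost sure quantifier follows automatically once the estimate is established pointwise in $\omega$.

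First, I would reduce to $f \in C_c^\infty(\mathbb{R}^d)$ by spatial mollification. Mollifying $f$ with $\eta_\delta$ preserves the three norms on the right-hand side up to uniform bounds and approximates $f$ in the relevant spaces, so density, together with the continuity of the maps $f \mapsto Lf$, $f \mapsto Kf$ and $f \mapsto [L,K]f$ between the appropriate Sobolev spaces, reduces the general case to the smooth one.

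Next, I would establish the estimate by complex interpolation between the two endpoint inequalities corresponding to $\epsilon = 0$ and $\epsilon = 1$. The $\epsilon = 0$ bound $\|[L,K]f\|_{H^{-1}} \leq C\|f\|_{H^0}$ is immediate, because $[L,K]$ is itself a first-order differential operator with coefficients in $BC^\infty_b$. The $\epsilon = 1$ bound $\|[L,K]f\|_{H^{-1/2}} \leq C(\|Lf\|_{H^0} + \|Kf\|_{H^0} + \|f\|_{H^0})$ is the substantive ingredient. For it, I would pair with a test function $\varphi \in C_c^\infty$ via the identity $\langle [L,K]f, \varphi\rangle = \langle Kf, L^*\varphi\rangle - \langle Lf, K^*\varphi\rangle$, localize $\varphi$ using a Littlewood--Paley decomposition $\varphi = \sum_j \Delta_j\varphi$, and balance the two summands across dyadic frequencies $|\xi|\sim 2^j$. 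Summing the dyadic estimates against the norm identity $\|\varphi\|_{H^{1/2}}^2 \simeq \sum_j 2^j \|\Delta_j\varphi\|_{L^2}^2$ and using Cauchy--Schwarz yields the $H^{-1/2}$ bound. Complex interpolation between the two endpoint operators then gives the claimed bound at the intermediate exponent $-1+\epsilon/2$ for every $\epsilon\in[0,1]$.

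The main obstacle is the $\epsilon = 1$ endpoint: naive duality fails because $L^*\varphi \in H^{-1/2}$ cannot be paired directly with $Kf \in L^2$, so the two apparently divergent terms $\langle Kf, L^*\varphi\rangle$ and $\langle Lf, K^*\varphi\rangle$ must be treated jointly via the Littlewood--Paley cutoff, tuned so that their individually divergent pieces cancel against one another. This is the harmonic-analytic heart of the lemma; the reduction to smooth $f$ and the interpolation step are routine, and the stochastic aspect adds no additional analytic difficulty because the $BC^\infty_b$ bounds on the coefficients of $L$, $K$, and $[L,K]$ are uniform in $\omega$.
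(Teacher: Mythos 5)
The paper does not actually prove this lemma: it simply cites Lemma 4.2 of Krylov's paper and asserts that the argument carries over $\omega$-by-$\omega$ because the coefficients lie in $BC^\infty_b$. Your reduction to smooth $f$ and your remark that the stochastic aspect is harmless are consistent with that, and your $\epsilon=0$ endpoint is correct. The problem is that the two substantive steps of your plan are exactly the ones that are not carried out. First, the interpolation step is a genuine gap: the estimate is not of the form ``a fixed linear operator is bounded between two interpolation couples.'' The right-hand side is the graph norm of the space $D_\epsilon=\{f\in H^0:\ Lf\in H^{-1+\epsilon},\ Kf\in H^0\}$, which varies with $\epsilon$ (and at your $\epsilon=0$ endpoint you even drop the $Lf$ and $Kf$ terms altogether, so the two endpoint estimates do not have the same structure). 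To deduce the intermediate case from the endpoints you would need an embedding of the form $D_\epsilon\hookrightarrow [D_0,D_1]_{\epsilon}$, i.e.\ an interpolation-of-domains statement for the unbounded operators $L,K$; this is false in general and is not addressed. Krylov's proof avoids this entirely by establishing the estimate directly for each $\epsilon$, pairing $[L,K]f$ against $\Lambda^{-2+\epsilon}[L,K]f$ with $\Lambda^s=(I-\Delta)^{s/2}$ and using that commutators such as $[\Lambda^s,L]$ are pseudodifferential operators of order $s$.

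Second, the $\epsilon=1$ endpoint, which you correctly identify as the heart of the matter, is only gestured at, and the mechanism you describe is not the right one. In the classical subelliptic argument (Kohn/H\"ormander, and likewise in Krylov), one does not rely on a cancellation between the two divergent pairings $\langle Kf,L^*\varphi\rangle$ and $\langle Lf,K^*\varphi\rangle$; each term is handled separately by commuting $L$ (resp.\ $K$) past a smoothing factor, e.g.
\begin{equation*}
\bigl(\Lambda^{-1}[L,K]f,\,LKf\bigr)=-\bigl(\Lambda^{-1}[L,K]Lf,\,Kf\bigr)-\bigl([L,\Lambda^{-1}[L,K]]f,\,Kf\bigr)+(\text{zero-order terms}),
\end{equation*}
where the first term is controlled because $\Lambda^{-1}[L,K]$ maps $H^{0}$ to $H^{0}$ once $L$ has been moved onto $f$ \emph{before} the bracket is applied, and the second because $[L,\Lambda^{-1}[L,K]]$ is of order $0$. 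Your Littlewood--Paley version would have to reproduce exactly these commutator gains dyadic block by dyadic block; asserting that ``the individually divergent pieces cancel against one another'' is precisely the claim that needs proof, and as stated it points in the wrong direction. As it stands the proposal establishes the trivial endpoint and defers both the hard endpoint and the passage to general $\epsilon$ to arguments that are either absent or do not work as described.
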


\vspace{3mm}

Keeping the above lemma in mind, we recall the uniform boundedness estimates in the proof of Lemma \ref{lemmaestim} in Section 5:
\begin{eqnarray}
\label{estimates on u-S}
E\sup_{t\in[0,T]}\|u_s-S'_s\|^2&+&E\int_{0}^{T}\sum_{k=1}^n\|L_k (u_s-S'_s)\|^2ds\nonumber\\
&\leq& C\left\{ E\|\xi-S_0\|^2+E\int_{0}^{T}\|\tilde f_s\|^2
+\|\tilde{g}_s\|^2+\|\tilde{h}_s\|^2ds\right\},
\end{eqnarray}
where $\tilde{f}=f-f'$, $\tilde{g}=g-g'$ and $\tilde{h}=h-h'$.
\vspace{2mm}

Since $\|\cdot\|_{H^{s}}\leq \|\cdot\|_{H^{0}}$, for $s\leq0$, applying  the Lemma $\ref{hormander lemma}$ and above estimates repeatedly, by the assumption \textbf{(HA)}, we obtain the following result:

\begin{theorem}
Under assumption \textbf{(H)}, \textbf{(I)}, \textbf{(O)} and \textbf{(HA)}, let $\eta=\frac{1}{2^{n_0}}$, then $u_t-S'_t\in L^2(\Omega\times[0,T]; H^\eta)$ a.s. and
\begin{equation*}
E\int_{0}^{T}\|u_s-S'_s\|^2_{H^{\eta}}\leq N\left\{ E\|\xi-S_0\|^2+E\int_{0}^{T}\|\tilde f_s\|^2
+\|\tilde{g}_s\|^2+\|\tilde{h}_s\|^2ds\right\},
\end{equation*}
where the positive constant $N$  depends on $T$, $n_0$ and the structure constants of the OSPDE.
\end{theorem}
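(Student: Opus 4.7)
The plan is to start from the energy estimate (\ref{estimates on u-S}), which supplies pathwise $L^2$-control of $u-S'$ and of $L_k(u-S')$ for $k=1,\dots,n$, and then use Lemma \ref{hormander lemma} as a bootstrap along the Lie hierarchy $\mathbb{L}_0\subset\mathbb{L}_1\subset\cdots\subset\mathbb{L}_{n_0}$. After $n_0$ iterations I expect to control every $M(u-S')$ with $M\in\mathbb{L}_{n_0}$ in $H^{-1+\eta}$, with $\eta=2^{-n_0}$, and then to invoke Hypothesis \textbf{(HA)} to deduce the same bound on $\partial_i(u-S')$, i.e.\ on $u-S'$ in $H^{\eta}$.

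Writing $f:=u-S'$, I will prove by induction on $m\in\{0,1,\dots,n_0\}$ that for every $M\in\mathbb{L}_m$, a.s.\ and for a.e.\ $t\in[0,T]$,
\[
\|Mf\|^2_{H^{-1+2^{-m}}}\le C_m\Bigl(\|f\|^2_{H^0}+\sum_{k=1}^n\|L_kf\|^2_{H^0}\Bigr),
\]
with a deterministic constant $C_m$. The case $m=0$ is immediate since $\mathbb{L}_0=\{L_1,\dots,L_n\}$ and $H^{-1+1}=H^0$. For the inductive step, any $M'\in\mathbb{L}_{m+1}\setminus\mathbb{L}_m$ equals $[L_k,M]$ for some $M\in\mathbb{L}_m$; I apply Lemma \ref{hormander lemma} with $L=M$, $K=L_k$, and $\epsilon=2^{-m}$, and then use the induction hypothesis to bound $\|Mf\|_{H^{-1+2^{-m}}}$. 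The hypotheses of the lemma ($f\in H^0$, $Mf\in H^{-1+2^{-m}}$, $L_k f\in H^0$) are exactly what has been produced at the previous level, so the bootstrap closes. A routine smoothing of $f$ (for example convolution after extending by zero outside $\mathcal O$) legitimises the distributional commutator manipulations before passing to the limit.

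Once the bound at level $n_0$ is established, Hypothesis \textbf{(HA)} provides, for each $i=1,\dots,d$, a finite representation $\partial_i=\sum_M c_{iM}M$ with $M\in\mathbb{L}_{n_0}$ and $c_{iM}\in BC_b^\infty$. Boundedness of multiplication by $BC_b^\infty$ functions on $H^{-1+\eta}$ (by duality from its boundedness on $H^{1-\eta}$) then yields
\[
\|\partial_i f\|^2_{H^{-1+\eta}}\le C\Bigl(\|f\|^2_{H^0}+\sum_{k=1}^n\|L_k f\|^2_{H^0}\Bigr),
\]
whence summing in $i$ controls $\|f\|^2_{H^\eta}$ by the same right-hand side (the null Dirichlet boundary condition shared by $u$ and $S'$ allows the zero-extension used to identify $H^\eta(\mathcal O)$ with the restriction of $H^\eta(\mathbb R^d)$, and the dimension-dependent lower bound on $n_0$ in \textbf{(HA)} keeps $\eta\le 1$). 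Integrating in $t$, taking expectation, and substituting (\ref{estimates on u-S}) deliver the announced estimate.

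The main obstacle will be the bookkeeping of the Lie-bracket iteration together with the presence of $BC_b^\infty$ coefficients when passing from $\mathbb{L}_{n_0}$ to $Lie_{n_0}$: the constants produced at each level must remain deterministic and independent of $\omega$, and the multiplier action on fractional-order Bessel potential spaces must be invoked at negative smoothness by the duality argument sketched above. A secondary but routine point concerns the domain aspect of $H^\eta(\mathcal O)$, which is handled through the extension/restriction convention adopted at the beginning of this section.
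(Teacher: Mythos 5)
Your proposal is correct and follows essentially the same route the paper intends: the paper's own justification is precisely "apply Lemma \ref{hormander lemma} and the estimate (\ref{estimates on u-S}) repeatedly under \textbf{(HA)}," and your induction on the Lie-bracket levels, with the smoothness index halving from $H^{-1+2^{-m}}$ to $H^{-1+2^{-m-1}}$ at each step and the final passage from $\mathbb{L}_{n_0}$ to $\partial_i$ via the $BC^\infty_b$ coefficients, is exactly that argument with the bookkeeping made explicit. The additional points you flag (deterministic constants, multiplier bounds at negative order by duality, the extension/restriction convention for $H^\eta(\mathcal{O})$) are sensible refinements of details the paper leaves implicit.
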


The iteration sequence is constructed as follows. For $\lambda>0$ and $m\in \mathbb{N}_0$, set
\begin{eqnarray*}
&&v=u-S',\ \ v^m=[v-\lambda(1-\frac{1}{2^m})]^+\,, \\
&&V^m_t=\sup_{t\in[0,T]}\|v^m_t\|^2+\int_{0}^{T}\sum_{k=1}^n\|L_k v^m_s\|^2+\|v^m_s\|^2_{H^{\eta}}ds\,.
\end{eqnarray*}

The following properties of  sequence $\{v^m\}$ will be used a lot in this section
\vspace{2mm}.

For every $m\in \mathbb{N}$,  $v^m\leq v^{m-1}$, $vI_{\{v^m>0\}}=v^m+\lambda(1-\frac{1}{2^m})I_{\{v^m>0\}}$,
$|\frac{\partial v^m}{\partial x_i}|\leq |\frac{\partial v^{m-1}}{\partial x_i}|$, for $i=1,\cdots,n$, and
$I_{\{v^m>0\}}\leq \left(\frac{2^m v^{m-1}}{\lambda}\right)^{q}$, for $\forall q>0$.

\vspace{3mm}
Since the process $v_t$ satisfies the following OSPDE:
\begin{eqnarray}
dv_t=(div(A\nabla v_t)+ \tilde{f}_t+div(\sigma \tilde{g}_t))dt+\sum_{j} \tilde{h}^{j}_t dB^j_t+\nu(x,dt)\,,\nonumber
\end{eqnarray}
by Ito's formula, we obtain
\begin{eqnarray}
&&\|v^m_t\|^2+2\int_{0}^{t}\sum_{k=1}^{n}\|L_k v^m\|^2ds\nonumber\\
&=& \|v^m_0\|^2+2\int_{0}^{t}(\tilde{f}_s, v^m_s)ds-2\int_{0}^{t}(\tilde{g}_s, \sigma ^T\nabla v^m_s)ds
+2\sum_{j}\int_{0}^{t}(v^m_s, \tilde{h}^{j}_s)dB^j_s\nonumber\\
&&+\int_{0}^{t}\|I_{\{v^m_s>0\}}\tilde{h}_s\|^2ds+2\int_{0}^{t}\int_{\mathcal{O}} v^m_s \nu(dxds) \nonumber\\
&\leq&  \|v^m_0\|^2+2\int_{0}^{t}(\tilde{f}_s, v^m_s)ds
+\epsilon \int_{0}^{t}\sum_{k=1}^{n}\|L_k v^m_s\|^2ds+\frac{1}{\epsilon}\int_{0}^{t}\|\tilde{g}_s I_{\{v^m_s>0\}}\|^2ds\nonumber\\
&&+2\sum_{j}\int_{0}^{t}(v^m_s, \tilde{h}^{j}_s)dB^j_s+\int_{0}^{t}\|I_{\{v^m_s>0\}}\tilde{h}_s\|^2ds\,,\nonumber
\end{eqnarray}
where $\epsilon>0$. The support of $\nu$ is $\{v=0\}$ and $v=0$ implies $v^m=0$, hence $\int_{0}^{t}\int_{\mathcal{O}} v^m_s \nu(dxds)=0$. Set $\epsilon=1$ and $M^m_t:=\sum_{j}\int_{0}^{t}(v^m_s, \tilde{h}^{j}_s)dB^j_s$. It follows that
\begin{eqnarray}
&&\|v^m_t\|^2+\int_{0}^{t}\sum_{k=1}^{n}\|L_k v^m\|^2ds\nonumber\\
&\leq& \|v^m_0\|^2+2\int_{0}^{t} (\tilde{f}_s, v^m_s)+\|\tilde{g}_s I_{\{v^m_s>0\}}\|^2+\|I_{\{v^m_s>0\}}\tilde{h}_s\|^2ds
+2\sup_{t\in[0,T]}|M^m_t|.\nonumber
\end{eqnarray}
Applying Lemma $\ref{hormander lemma}$ and above estimates repeatedly, by the assumption \textbf{(HA)}, we obtain that, there exists a constant $C>0$ depending on $T$ and $n_0$ such that
\begin{eqnarray}
\label{vm estimates}
&&\sup_{t\in[0,T]}\|v^m_t\|^2+\int_{0}^{T}\|v^m_s\|^2_{H^\eta}+\sum_{k=1}^{n}\|L_k v^m_s\|^2ds\\
&\leq& C\left(\|v^m_0\|^2+2\int_{0}^{T}(\tilde{f}_s, v^m_s)+\|\tilde{g}_s I_{\{v^m_s>0\}}\|^2+\|I_{\{v^m_s>0\}}\tilde{h}_s\|^2ds
+2\sup_{t\in[0,T]}|M^m_t|\right).\nonumber
\end{eqnarray}
Therefore, we have
\begin{eqnarray}
\label{estimate Vm}
V^m\leq C\left(\|v^m_0\|^2+\int_{0}^{T} (\tilde{f}_s, v^m_s)+\|\tilde{g}_s I_{\{v^m_s>0\}}\|^2+\|I_{\{v^m_s>0\}}\tilde{h}_s\|^2ds
+\sup_{t\in[0,T]}|M^m_t|\right).
\end{eqnarray}

\vspace{2mm}
Since $v^m\in L^{2}([0,T];H^{\eta})\cap C([0,T]; L^2)$ a.s., we are able to embed $v^m$ into the space with higher integrability in the following lemma.
\begin{lemma}
 There is a constant $C>0$ depending on $n_0$ and $d$ such that
\begin{eqnarray}
\label{estimate vm- q}
\|v^m\|_{L^{\frac{2(d+2\eta)}{d}}([0,T]\times\mathcal{O})}\leq  C\|v^m\|^{\frac{2\eta}{d+2\eta}}_{C([0,T]; L^2)}
\cdot \|v^m\|_{ L^{2}([0,T];H^{\eta})}^{\frac{d}{d+2\eta}},\ a.s.
\end{eqnarray}
and
\begin{eqnarray}
\label{vm<VM}
\|v^m\|^2_{L^{\frac{2(d+2\eta)}{d}}([0,T]\times\mathcal{O})}\leq CV^m,\ a.s..
\end{eqnarray}
\end{lemma}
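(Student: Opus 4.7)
The plan is to obtain (\ref{estimate vm- q}) by combining a Gagliardo--Nirenberg--Sobolev interpolation in the space variable with a direct integration in time, and then to read off (\ref{vm<VM}) as a one-line consequence of Young's inequality. Neither step requires any probability; the work is purely deterministic and carried out pathwise.

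First I would fix $t\in[0,T]$ and work on $\mathcal{O}$. Since $\mathcal{O}$ has smooth boundary and assumption \textbf{(HA)} forces $2\eta<d$ in every admissible case ($d=1$ gives $\eta=1/4$, $d=2$ gives $\eta=1/2$, and $d\geq 3$ gives $\eta\leq 1$), the Sobolev embedding $H^\eta(\mathcal{O})\hookrightarrow L^{2d/(d-2\eta)}(\mathcal{O})$ holds. Interpolating between $L^2(\mathcal{O})$ and $L^{2d/(d-2\eta)}(\mathcal{O})$ at the target exponent $q:=\tfrac{2(d+2\eta)}{d}$, the standard interpolation identity $\tfrac{1}{q}=\tfrac{\theta}{2}+(1-\theta)\tfrac{d-2\eta}{2d}$ is solved by $\theta=\tfrac{2\eta}{d+2\eta}$, so
$$\|v^m_t\|_{L^q(\mathcal{O})}\;\leq\; C\,\|v^m_t\|_{L^2}^{\theta}\,\|v^m_t\|_{H^\eta(\mathcal{O})}^{1-\theta}.$$

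The algebraic identity that makes the time integration painless is $(1-\theta)q=2$. Raising the pointwise bound to the $q$-th power and integrating over $[0,T]$ yields
$$\int_0^T\|v^m_t\|_{L^q(\mathcal{O})}^q\,dt\;\leq\; C\,\sup_{t\in[0,T]}\|v^m_t\|^{\theta q}\int_0^T\|v^m_t\|_{H^\eta(\mathcal{O})}^{2}\,dt,$$
and taking the $q$-th root, together with $\theta q/q=\theta=\tfrac{2\eta}{d+2\eta}$ and $2/q=\tfrac{d}{d+2\eta}$, gives exactly (\ref{estimate vm- q}).

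For (\ref{vm<VM}) I would square (\ref{estimate vm- q}). The two exponents $\tfrac{4\eta}{d+2\eta}$ and $\tfrac{2d}{d+2\eta}$ sum to $2$, so Young's inequality with conjugate weights $\tfrac{d+2\eta}{2\eta}$ and $\tfrac{d+2\eta}{d}$ bounds the product $\|v^m\|_{C([0,T];L^2)}^{4\eta/(d+2\eta)}\cdot\|v^m\|_{L^2([0,T];H^\eta)}^{2d/(d+2\eta)}$ by a convex combination of $\sup_{t\in[0,T]}\|v^m_t\|^2$ and $\int_0^T\|v^m_s\|_{H^\eta}^2\,ds$, both of which are dominated by $V^m$ by its very definition. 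I do not anticipate a real obstacle; the only mildly delicate technicality is the validity of the Gagliardo--Nirenberg inequality on the bounded domain $\mathcal{O}$ rather than on $\mathbb{R}^d$, and this is handled by the existence of a continuous extension operator $E:H^s(\mathcal{O})\to H^s(\mathbb{R}^d)$ which comes from the smoothness of $\partial\mathcal{O}$.
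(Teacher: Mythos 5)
Your proposal is correct and follows essentially the same route as the paper: the paper's splitting $|v^m_s|^q=|v^m_s|^{q\alpha}|v^m_s|^{q(1-\alpha)}$ followed by H\"older and the Sobolev embedding $H^\eta\hookrightarrow L^{2d/(d-2\eta)}$ is precisely the derivation of the Gagliardo--Nirenberg interpolation you invoke, with the paper's $\alpha=\tfrac{d}{d+2\eta}$ equal to your $1-\theta$. The only cosmetic difference is at the end, where the paper bounds each factor directly by $(V^m)^{\eta/(d+2\eta)}$ and $(V^m)^{d/(2(d+2\eta))}$ (exponents summing to $\tfrac12$) rather than applying Young's inequality, but both give (\ref{vm<VM}).
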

\begin{proof}
By H\"{o}lder's inequality, setting $\frac{1}{l}+\frac{1}{r}=1$, $\alpha<1$ and constant $q>0$ which will be valued later, we obtain
\begin{eqnarray}
\|v^m_s\|^{q}_{L^q}&:=&\int_{\mathcal{O}}|v^m_s(x)|^q dx\nonumber\\
&=&\int_{\mathcal{O}}|v^m_s(x)|^{q\alpha} |v^m_s(x)|^{q(1-\alpha)}dx\nonumber\\
&\leq&\left( \int_{\mathcal{O}}|v^m_s(x)|^{q\alpha l} dx\right)^{\frac{1}{l}}\cdot \left( \int_{\mathcal{O}}|v^m_s(x)|^{q(1-\alpha) r} dx\right)^{\frac{1}{r}}\nonumber\\&=&\|v^m_s\|_{L^{q\alpha l}}^{q\alpha}\cdot \|v^m_s\|_{L^{q(1-\alpha)r}}^{q(1-\alpha)}.\nonumber
\end{eqnarray}
By Sobolev embedding inequality, set $q\alpha l=\frac{2d}{d-2\eta}$, then
\begin{eqnarray}
\|v^m_s\|_{L^{q\alpha l}}\leq C\|v^m_s\|_{H^{\eta}}.\nonumber
\end{eqnarray}
Therefore,
\begin{eqnarray}
\int_{0}^{T}\|v^m_s\|^{q}_{L^q}ds\leq C\sup_{t\in[0,T]} \|v^m_t\|_{L^{q(1-\alpha)r}}^{q(1-\alpha)}\cdot \int_{0}^{T}\|v^m_s\|_{H^{\eta}}^{q\alpha}ds.\nonumber
\end{eqnarray}

Letting $q\alpha=2, q(1-\alpha)r=2$,  we know the right hand side of above inequality is finite. By further simple calculation, we obtain
 $\alpha=\frac{d}{d+2\eta}$, $q=\frac{2(d+2\eta)}{d}$, $r=\frac{d}{2\eta}$ and $l=\frac{d}{d-2\eta}$. Hence,
\begin{eqnarray*}
\|v^m\|_{L^{\frac{2(d+2\eta)}{d}}([0,T]\times\mathcal{O})}&=&\left(\int_{0}^{T}\|v^m_s\|^{q}_{L^q}ds\right)^{\frac{1}{q}}\\
&\leq& C\left( \sup_{t\in[0,T]}\|v^m_t\|^2\right)^{\frac{\eta}{d+2\eta}}
\cdot\left(\int_{0}^{T}\|v^m_s\|_{H^{\eta}}^{2}ds\right)^{\frac{d}{2(d+2\eta)}}\nonumber\\
&\leq& C (V^m)^{\frac{\eta}{d+2\eta}}\cdot (V^m)^{\frac{d}{2(d+2\eta)}}=C (V^m)^{\frac{1}{2}}.\nonumber
\end{eqnarray*}
 $(\ref{estimate vm- q})$ and  $(\ref{vm<VM})$ are implied by the first and second inequalities above respectively.
\end{proof}

\vspace{1mm}
The relationship between iteration sequence $V^ms$ is discussed as follows.
\begin{lemma}
\label{iteration}
Assume $v_0$ is upper bounded and set $\lambda_0=2\sup_{\Omega\times\mathcal{O}}v_0$. Suppose that $\tilde f\in L^\infty(\Omega; L^k([0,T]\times\mathcal{O}))$,  $\tilde h,\tilde g \in L^\infty(\Omega; L^{2k}([0,T]\times\mathcal{O}))$, for some integer $k>1+\frac{d}{2\eta}$. Set $\lambda>(\lambda_0\vee 1)$ and $\alpha_0=\frac{2\eta k-d-2\eta}{dk}$. There exists a constant $K>0$ such that for any $m\in \mathbb{N}^+$,
\begin{eqnarray*}
V^m\leq \frac{K^m}{\lambda^{2\alpha}_0} (V^{m-1})^{1+\alpha_0}+K \sup_{t\in[0,T]}|M^m_t|\,.
\end{eqnarray*}
\end{lemma}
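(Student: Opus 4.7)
The plan is to start from the energy bound \eqref{estimate Vm} for $V^m$ and control each of the three non-martingale driving terms on its right-hand side in the form $C(2^m/\lambda)^{a}\cdot(\text{data})\cdot(V^{m-1})^{1+\alpha_0}$ for some admissible exponent $a>0$. Afterwards, because $\lambda>1$ we will have $\lambda^{-a}\leq \lambda^{-2\alpha_0}$ whenever $a\geq 2\alpha_0$, which collapses all terms into the desired $K^m\lambda^{-2\alpha_0}(V^{m-1})^{1+\alpha_0}$. The martingale term $\sup_t|M^m_t|$ is simply carried through. First I observe that the initial value contribution vanishes: since $\lambda>\lambda_0=2\sup_{\Omega\times\mathcal{O}} v_0$ and $m\geq 1$, we have $\lambda(1-2^{-m})\geq \lambda/2>\sup v_0$, so $v^m_0=(v_0-\lambda(1-2^{-m}))^+=0$.

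The workhorse is the pointwise inequality $I_{\{v^m>0\}}\leq(2^m v^{m-1}/\lambda)^{s}$, valid for every $s>0$, which converts the indicator into a power of $v^{m-1}$ that can then be estimated via the embedding \eqref{vm<VM}: $\|v^{m-1}\|_{L^q}^2\leq CV^{m-1}$ with $q=2(d+2\eta)/d$. For the linear source, using $v^m=v^m I_{\{v^m>0\}}\leq v^{m-1} I_{\{v^m>0\}}$ and choosing $s=s_f:=1+2\alpha_0$ so that $(1+s_f)k/(k-1)=q$, Hölder's inequality in space-time with the exponent pair $(k,k/(k-1))$ yields
\[ \int_0^T(\tilde f_s,v^m_s)\,ds\;\leq\;\Bigl(\tfrac{2^m}{\lambda}\Bigr)^{\!1+2\alpha_0}\|\tilde f\|_{L^k}\,\|v^{m-1}\|_{L^q}^{2(1+\alpha_0)}\;\leq\;C\Bigl(\tfrac{2^m}{\lambda}\Bigr)^{\!1+2\alpha_0}\|\tilde f\|_{L^k}(V^{m-1})^{1+\alpha_0}. \]
For the $\tilde g$ and $\tilde h$ terms, which carry the indicator but no additional $v^m$ factor, choosing $s=s_g:=2(1+\alpha_0)$ and applying Hölder again with the same pair gives
\[ \int_0^T\!\bigl(\|\tilde g_s I_{\{v^m_s>0\}}\|^2+\|I_{\{v^m_s>0\}}\tilde h_s\|^2\bigr)\,ds\;\leq\;C\Bigl(\tfrac{2^m}{\lambda}\Bigr)^{\!2(1+\alpha_0)}\bigl(\|\tilde g\|_{L^{2k}}^2+\|\tilde h\|_{L^{2k}}^2\bigr)(V^{m-1})^{1+\alpha_0}. \]

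The principal bookkeeping obstacle is arranging the Hölder exponents so that the power of $\|v^{m-1}\|_{L^q}$ produced is exactly $2(1+\alpha_0)$; this rests on the algebraic identity $1+\alpha_0=(d+2\eta)(k-1)/(dk)=q(k-1)/(2k)$, which follows directly from the definition of $\alpha_0$ and forces the Sobolev critical exponent $q$ to be the right one to plug into \eqref{vm<VM}. The assumption $k>1+d/(2\eta)$ is used precisely here, as it guarantees $\alpha_0>0$ and hence $s_f,s_g>0$. Inserting the three estimates into \eqref{estimate Vm}, using $\lambda>1$ to dominate both $\lambda^{-(1+2\alpha_0)}$ and $\lambda^{-2(1+\alpha_0)}$ by $\lambda^{-2\alpha_0}$, and absorbing the factors $2^{(1+2\alpha_0)m}$, $2^{2(1+\alpha_0)m}$ together with the (finite a.s.) data norms $\|\tilde f\|_{L^k},\|\tilde g\|_{L^{2k}},\|\tilde h\|_{L^{2k}}$ into a single constant of the form $K^m$, yields the claimed iteration inequality.
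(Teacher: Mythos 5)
Your proposal is correct and follows essentially the same route as the paper: it starts from the energy bound \eqref{estimate Vm}, kills $v^m_0$ using $\lambda>\lambda_0$, and converts each data term via the pointwise bound $I_{\{v^m>0\}}\leq(2^m v^{m-1}/\lambda)^{s}$, H\"older's inequality, and the embedding \eqref{vm<VM}, arriving at the same exponents $1+2\alpha_0$ and $2+2\alpha_0$ before absorbing everything into $K^m\lambda^{-2\alpha_0}$. The only difference is cosmetic bookkeeping (you use a two-factor H\"older after raising the indicator to a tailored power $s_f$ or $s_g$, while the paper uses a three-factor H\"older with an explicit intermediate exponent $\beta$), and the resulting estimates coincide.
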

\begin{proof}
Recalling the upper bound of $V^m$ in ($\ref{estimate Vm}$), we will estimate its right hand side term by term.

Firstly, set $\beta^{-1}=1-\frac{d}{2(d+2\eta)}-\frac{1}{k}>0$, then H\"{o}lder's inequality implies that
\begin{equation}
\label{<f,v^m>}
\begin{split}
\int_{0}^{T}(\tilde{f}_s, v^m_s)ds
&\leq \|\tilde{f}\|_{L^k([0,T]\times\mathcal{O})}
\cdot \|I_{\{v^m>0\}}\|_{L^\beta([0,T]\times\mathcal{O})}\cdot \|v^m\|_{L^{\frac{2(d+2\eta)}{d}}([0,T]\times\mathcal{O})}\\
&\leq\|\tilde{f}\|_{L^k([0,T]\times\mathcal{O})}
\cdot\left(\int_{0}^{T}\int_{\mathcal{O}}I_{\{v^m_s>0\}}dxds\right)^{\frac{1}{\beta}}\cdot\|v^{m-1}\|_{L^{\frac{2(d+2\eta)}{d}}([0,T]\times\mathcal{O})}\\
&\leq \|\tilde{f}\|_{L^k([0,T]\times\mathcal{O})}
\cdot\left(\int_{0}^{T}\int_{\mathcal{O}}\Big|\frac{2^m v^{m-1}_s}{\lambda}\Big|^{\frac{2(d+2\eta)}{d}}dxds\right)^{\frac{1}{\beta}}
\cdot\|v^{m-1}\|_{L^{\frac{2(d+2\eta)}{d}}([0,T]\times\mathcal{O})}\\
&\leq\left(\frac{2^m}{\lambda}\right)^{\frac{2(d+2\eta)}{d\beta}}\cdot\|\tilde{f}\|_{L^k([0,T]\times\mathcal{O})}
\cdot\|v^{m-1}\|^{1+\frac{2(d+2\eta)}{d\beta}}_{L^{\frac{2(d+2\eta)}{d}}([0,T]\times\mathcal{O})}\\
&\leq C\left(\frac{2^m}{\lambda}\right)^{1+2\alpha_0}\cdot\|\tilde{f}\|_{L^k([0,T]\times\mathcal{O})}
\cdot(V^{m-1})^{1+\alpha_0}, \quad a.s.,
\end{split}\end{equation}
where the last inequality is from $(\ref{vm<VM})$.

Secondly, set $\gamma^{-1}=1-k^{-1}$, it follows that
\begin{equation}\label{hIvm>0}\begin{split}
\int_{0}^{T}\|I_{\{v^m>0\}}\tilde{h}_s\|^2ds
&\leq \|\tilde{h}\|^2_{L^{2k}([0,T]\times\mathcal{O})}
\cdot\left( \int_{0}^{T}\int_{\mathcal{O}}\left(\frac{2^m v^{m-1}_s}{\lambda}\right)^{\frac{2(d+2\eta)}{d}}dxds\right)^{\frac{1}{\gamma}}\\
&\leq \left(\frac{2^m}{\lambda}\right)^{\frac{2(d+2\eta)}{d\gamma}}\cdot \|\tilde{h}\|^2_{L^{2k}([0,T]\times\mathcal{O})}
\cdot \|v^{m-1}\|^{\frac{2(d+2\eta)}{d\gamma}}_{L^{\frac{2(d+2\eta)}{d}}([0,T]\times\mathcal{O})}\\
&\leq C\left(\frac{2^m}{\lambda}\right)^{2+2\alpha_0}\cdot \|\tilde{h}\|^2_{L^{2k}([0,T]\times\mathcal{O})}
\cdot \left(V^{m-1}\right)^{1+\alpha_0}, \quad a.s..
\end{split}\end{equation}

The term $\int_{0}^{T}\|I_{\{v^m>0\}}\tilde{g}_s\|^2ds$ can be estimated similarly.

Finally, note that if $\lambda>\lambda_0$, then $\|v^m_0\|=0$. Combining the inequalities ($\ref{<f,v^m>}$) and ($\ref{hIvm>0}$),  we get the lemma proved.
\end{proof}
The tail probability of $\|v^+\|_{L^{\infty}([0,T]\times\mathcal{O})}$ is estimated in the next lemma.
\begin{lemma}
\label{u+ V0}
Under the assumption in Lemma  \ref{iteration},  and  $\tilde{h}\in L^{2\gamma}([0,T]\times\mathcal{O})$ a.s., where $\frac{1}{\gamma}=\frac{1}{k}-\frac{d\alpha_0}{d+2\eta}$.  Then there is a constant $C'>0$ such that, for any $\lambda>\lambda_0>1$,
\begin{eqnarray*}
P\left(\left\{\|v^+\|_{L^{\infty}([0,T]\times\mathcal{O})}>\lambda, V^{0}\leq \sqrt{\lambda}\right\}\right)\leq 2e^{-C'\lambda^{2\alpha_0}}\,.
\end{eqnarray*}
\end{lemma}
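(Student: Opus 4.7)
The plan is to run a pathwise De Giorgi iteration based on the recursion of Lemma~\ref{iteration}, turning every failure of the iteration into a tail event for the martingale $M^m$ that is then controlled via the exponential martingale inequality. Fix $r>1$ so large that $K/r^{\alpha_0}\leq 1/2$ and $r^{2\alpha_0}>2^{2+4\alpha_0}$ (bounds depending only on $K$ and $\alpha_0$), write $M_m^{*}:=\sup_{t\in[0,T]}|M^m_t|$, and introduce the first-failure index
\[
\tau=\inf\{m\geq 0:\ V^m>\sqrt{\lambda}\,r^{-m}\},\qquad \inf\emptyset=\infty.
\]
If $\tau=\infty$ then $V^m\to 0$, which forces the non-increasing sequence $v^m=[v-\lambda(1-2^{-m})]^+$ to converge a.e.\ to $(v-\lambda)^+=0$, i.e.\ $\|v^+\|_{L^{\infty}([0,T]\times\mathcal{O})}\leq \lambda$. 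Since $\lambda>1$, the event $\{V^0\leq\sqrt{\lambda}\}$ forces $\tau\geq 1$, so it suffices to estimate $\sum_{m\geq 1}P(\{\tau=m\}\cap\{V^0\leq\sqrt{\lambda}\})$.

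On $\{\tau=m\}$, by definition $V^{m-1}\leq\sqrt{\lambda}\,r^{-(m-1)}$ and $V^m>\sqrt{\lambda}\,r^{-m}$; plugging into the recursion of Lemma~\ref{iteration} gives
\[
\sqrt{\lambda}\,r^{-m}<K^m\lambda^{1/2-3\alpha_0/2}\,r^{-(m-1)(1+\alpha_0)}+K M_m^{*}.
\]
A short computation, using $K/r^{\alpha_0}\leq 1/2$ and absorbing the remaining geometric factor into $\lambda^{3\alpha_0/2}$ (valid for $\lambda\geq \lambda_0$ sufficiently large, depending on $r,K,\alpha_0$), shows that the deterministic term on the right is at most $\tfrac{1}{2}\sqrt{\lambda}\,r^{-m}$, so that $K M_m^{*}>\tfrac{1}{2}\sqrt{\lambda}\,r^{-m}$ on the event.

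The crucial step is a deterministic bound on $\langle M^m\rangle_T$ on the event $\{V^{m-1}\leq\sqrt{\lambda}\,r^{-(m-1)}\}$. From $v^m=v^m I_{\{v^m>0\}}$ and Cauchy--Schwarz,
\[
\langle M^m\rangle_T\leq \sup_{t\leq T}\|v^m_t\|^2\int_0^T\|I_{\{v^m_s>0\}}\tilde h_s\|^2\,ds\leq V^{m-1}\int_0^T\|I_{\{v^m_s>0\}}\tilde h_s\|^2\,ds,
\]
where we used $v^m\leq v^{m-1}$. I would then estimate the last integral by H\"older's inequality with exponent $\delta$ dual to $\gamma$ (i.e.\ $\delta^{-1}=1-\gamma^{-1}$), the Chebyshev-type bound $I_{\{v^m>0\}}\leq (2^m v^{m-1}/\lambda)^{2(d+2\eta)/d}$, and the embedding~(\ref{vm<VM}). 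The exponent bookkeeping $(d+2\eta)/(d\delta)=1+2\alpha_0$ (which is precisely the reason for the specific definition of $\gamma$ in the statement) produces
\[
\int_0^T\|I_{\{v^m_s>0\}}\tilde h_s\|^2\,ds\leq C(2^m/\lambda)^{2+4\alpha_0}\|\tilde h\|^2_{L^{2\gamma}}(V^{m-1})^{1+2\alpha_0},
\]
and inserting $V^{m-1}\leq\sqrt{\lambda}\,r^{-(m-1)}$ gives the deterministic bound $\langle M^m\rangle_T\leq b_m:=C\,2^{m(2+4\alpha_0)}\,\lambda^{-1-3\alpha_0}\,r^{-(m-1)(2+2\alpha_0)}\|\tilde h\|^2_{L^{2\gamma}}$ on the event.

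Finally, the exponential inequality $P(\sup_t|M^m_t|\geq a,\ \langle M^m\rangle_T\leq b)\leq 2e^{-a^2/(2b)}$ with $a=\sqrt{\lambda}\,r^{-m}/(2K)$ and $b=b_m$ yields
\[
P(\{\tau=m\}\cap\{V^0\leq\sqrt{\lambda}\})\leq 2\exp\!\Bigl(-C''\|\tilde h\|^{-2}_{L^{2\gamma}}\lambda^{2+3\alpha_0}\bigl(r^{2\alpha_0}/2^{2+4\alpha_0}\bigr)^m\Bigr).
\]
Because $r^{2\alpha_0}>2^{2+4\alpha_0}$, the summands decay super-geometrically in $m$, so the sum is controlled by its $m=1$ term, which is at most $2e^{-C'\lambda^{2\alpha_0}}$ for $\lambda\geq\lambda_0$. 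The main obstacle is the quadratic-variation estimate of the third step: the H\"older step of Lemma~\ref{iteration} has to be sharpened using the stronger integrability $L^{2\gamma}$ (in place of $L^{2k}$) precisely in order to promote the exponent of $V^{m-1}$ from $1+\alpha_0$ to $2+2\alpha_0$, which is what makes $a^2/(2b_m)$ grow at the rate $\lambda^{2\alpha_0}$ required by the statement.
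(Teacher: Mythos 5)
Your proposal is correct and follows essentially the same strategy as the paper: decompose the bad event according to the first level $m$ at which the geometric decay $V^m\leq \sqrt{\lambda}\,\mu^{-m}$ fails, use the recursion of Lemma~\ref{iteration} to convert that failure into a lower bound on $\sup_t|M^m_t|$, bound $\langle M^m\rangle_T$ on the preceding good event via the sharpened H\"older/Chebyshev step with the $L^{2\gamma}$ integrability of $\tilde h$ (yielding the exponent $2+2\alpha_0$ on $V^{m-1}$), and conclude with the exponential martingale inequality and summation over $m$. The only differences are cosmetic (a first-failure index $\tau$ instead of the union $\bigcup_m (S^m)^c\cap S^{m-1}$, and a slightly different parametrization of the geometric rate), and your proof even inherits the same mild imprecision as the paper's, namely treating $\|\tilde h\|_{L^{2\gamma}}$ as if it were a deterministic constant when invoking the exponential inequality.
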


\begin{proof}
Set
$$
S^m=\left\{V^m\leq \frac{\lambda^{2\theta_0}}{\mu^m}\right\},
$$
where the constants $\mu>1$, $\theta_0>0$ will be valued later.

It is easy to check that
\begin{eqnarray*}
\left\{\|v^+\|_{L^{\infty}([0,T]\times\mathcal{O})}>\lambda, V^{0}\leq \lambda^{2\theta}\right\}\subset\bigcup_{m\in \mathbb{N}^+}\left((S^m)^c\cap S^{m-1}\right).\nonumber
\end{eqnarray*}
On $(S^m)^c\cap S^{m-1}$, we have
\begin{eqnarray*}
\sup_{t\in[0,T]}|M^m_t|&\geq& K^{-1}V^m-\frac{K^{m-1}}{\lambda^{2\alpha_0}}(V^{m-1})^{1+\alpha_0}\nonumber\\
&\geq& \frac{\lambda^{2\theta_0}}{K\mu^m}-\frac{K^{m-1}}{\lambda^{2\alpha_0}}\left(\frac{\lambda^{2\theta_0}}{\mu^{m-1}}\right)^{1+\alpha_0}\nonumber\\
%&\geq& (\frac{\lambda^{2\theta_0}}{\mu^{m-1}})^{1+\frac{\alpha_0}{2}}
%(\frac{\mu^{\frac{\alpha_0}{2}m-\frac{\alpha_0}{2}-1}}{K\lambda^{\alpha_0\theta_0}}-\frac{K^{m-1}\lambda^{\alpha_0\theta_0}}{\mu^{(m-1)\frac{\alpha_0}{2}}})
&=& \left(\frac{\lambda^{2\theta_0}}{\mu^{m-1}}\right)^{1+\alpha_0}
\left(\frac{\mu^{m\alpha_0-\alpha_0-1}}{K\lambda^{2\alpha_0\theta_0}}-\frac{K^{m-1}}{\lambda^{2\alpha_0}}\right)\,.\nonumber
\end{eqnarray*}
Set
$$
\gamma_m=\left(\frac{\lambda^{2\theta_0}}{\mu^{m-1}}\right)^{1+\alpha_0}\
\mbox{and}\ \
\beta_m=\frac{\mu^{m\alpha_0-\alpha_0-1}}{K\lambda^{2\alpha_0\theta_0}}-\frac{K^{m-1}}{\lambda^{2\alpha_0}}\,.
$$

Let $\theta_0=\frac{1}{2}$,  $\mu=F^{\frac{1}{\alpha_0}}$, where $F$ is a positive number such that $F> K \vee \sqrt{2N}$.  Since $\lambda>\lambda_0$,
\begin{eqnarray*}
\beta_m=\frac{F^{m-1}}{\lambda^{\alpha_0}}\left( \frac{1}{K\mu}-\frac{1}{\lambda^{\alpha_0}}\left(\frac{K}{F}\right)^{m-1}\right)
\geq\frac{F^{m-1}}{\lambda^{\alpha_0}}\left( \frac{1}{K\mu}-\frac{1}{\lambda^{\alpha_0}}\right)
=\tilde{C} \frac{F^{m}}{\lambda^{\alpha_0}}\,.
\end{eqnarray*}

By further calculation, set $\frac{1}{\gamma'}=1-\frac{1}{\gamma}$, then it is easy to check that 
$\frac{d+2\eta}{d\gamma'}=1+2\alpha_0$.

 we obtain that
\begin{eqnarray*}
\langle M^m\rangle_T&=&\int_{0}^{T}\left|( v^m_s, \tilde{h}_s)\right|^2ds\nonumber\\
&\leq& \sup_{t\in[0,T]}\|v^m_t\|^2\cdot\int_{0}^{T}\|I_{\{v^m>0\}}\tilde{h}_s\|^2ds\nonumber\\
&\leq& \sup_{t\in[0,T]}\|v^m_t\|^2\cdot \|\tilde{h}\|^2_{L^{2\gamma}}
\cdot \left( \int_{0}^{T}\int_{\mathcal{O}}\left(\frac{2^m v^{m-1}}{\lambda}\right)^{\frac{2(d+2\eta)}{d}}dxds\right)^{\frac{1}{\gamma'}}\nonumber\\
&\leq& \left(\frac{2^m}{\lambda}\right)^{2+4\alpha_0}\cdot\sup_{t\in[0,T]}\|v^m_t\|^2\cdot \|\tilde{h}\|^2_{L^{2\gamma}}
\cdot \|v^{m-1}\|^{2+4\alpha_0}_{L^{\frac{2(d+2\eta)}{d}}([0,T]\times\mathcal{O})}\nonumber\\
&\leq&C \left(\frac{2^m }{\lambda}\right)^{2+4\alpha_0}\cdot\|\tilde{h}\|^2_{L^{2\gamma}}
\cdot(V^{m-1})^{2+2\alpha_0}\nonumber\\
&\leq& \frac{N^m}{\lambda^{2\alpha_0}}\left(V^{m-1}\right)^{2+2\alpha_0},
\end{eqnarray*}
where the last inequality is obtained by choosing N large enough.
Then,
\begin{eqnarray*}
P\Big((S^m)^c\cap S^{m-1}\Big)&\leq& P\Big(\sup_{t\in[0,T]}M^m_t>\beta_m \gamma_m, \ (V^{m-1})^{1+\alpha_0}\leq \gamma_m\Big)\nonumber\\
&\leq& P\Big(\sup_{t\in[0,T]}M^m_t>\beta_m \gamma_m, \ \langle M^m\rangle_T\leq \frac{N^m}{\lambda^{4\alpha_0}}\gamma^2_m\Big)\nonumber\\
&\leq& e^{-\frac{\beta_m^2 }{2N^m}\lambda^{4\alpha_0}}\leq e^{-\frac{\tilde{C}^2}{2}\frac{F^{2m}}{N^m}\lambda^{2\alpha_0}}\leq e^{-C' 2^m \lambda^{2\alpha_0}}\leq e^{-C'm\lambda^{2\alpha_0}}\,.\nonumber
\end{eqnarray*}

Finally, it follows that
$$
P\left(\left\{\|v^+\|_{L^{\infty}([0,T]\times D)}>\lambda, V^{0}\leq \lambda^{2\theta_0}\right\}\right)\leq \sum_{m\in\mathbb{ N}^+}P\Big((S^m)^c\cap S^{m-1}\Big)\leq 2e^{-C'\lambda^{2\alpha_0}}\,.
$$
\end{proof}
Finally, we come to prove the $L^p-$estimates for the time-space uniform norm of weak solutions.
\begin{theorem}
Under the conditions in Lemma \ref{u+ V0},  for any $p>2$, we assume that $\tilde{f}, \tilde{g}, \tilde{h}\in L^{2q}(\Omega;L^2([0,T]\times\mathcal{O}))$ and $q>2p$, then
\begin{eqnarray*}
E\|(u-S')^+\|^p_{L^{\infty}([0,T]\times\mathcal{O})}<+\infty.
\end{eqnarray*}
\end{theorem}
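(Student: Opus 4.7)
The strategy is to combine the sub-Gaussian tail bound of Lemma \ref{u+ V0} with an $L^q(\Omega)$ moment estimate on the energy functional $V^0$, and then integrate against the layer-cake representation of $E\|\cdot\|^p_{L^\infty}$.

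\emph{Step 1 (Identification).} For $m=0$ one has $v^0=(v-0)^+=v^+=(u-S')^+$, so $\|v^+\|_{L^\infty([0,T]\times\mathcal{O})}=\|(u-S')^+\|_{L^\infty([0,T]\times\mathcal{O})}$ and $V^0=\sup_{t\in[0,T]}\|v^+_t\|^2+\int_0^T\bigl(\|v^+_s\|^2_{H^\eta}+\sum_{k=1}^n\|L_k v^+_s\|^2\bigr)ds$. The goal is to estimate this $L^\infty$-norm in $L^p(\Omega)$.

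\emph{Step 2 ($L^q$ moment of $V^0$).} Specializing \eqref{vm estimates} to $m=0$, using $I_{\{v^+>0\}}|\tilde g|\leq|\tilde g|$, $I_{\{v^+>0\}}|\tilde h|\leq|\tilde h|$, and the fact that $\operatorname{supp}\nu\subset\{v=0\}$ makes the $\nu$-contribution against $v^+$ nonpositive, I obtain
\[
V^0\leq C\Bigl(\|v^+_0\|^2+\int_0^T(\tilde f_s,v^+_s)\,ds+\int_0^T(\|\tilde g_s\|^2+\|\tilde h_s\|^2)\,ds+\sup_{t\in[0,T]}|M^0_t|\Bigr),
\]
with $M^0_t=\sum_j\int_0^t(v^+_s,\tilde h_s^j)\,dB_s^j$. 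Splitting $(\tilde f_s,v^+_s)$ via Cauchy--Schwarz with a small parameter absorbs a fraction of $\sup_t\|v^+_t\|^2$ back into $V^0$. Raising to the $q$-th power and invoking Burkholder--Davis--Gundy at order $q$ with $\langle M^0\rangle_T\leq V^0\int_0^T\|\tilde h_s\|^2ds$ gives, after Young,
\[
E(V^0)^q\leq C_q\Bigl(E\|v^+_0\|^{2q}+E\Bigl(\int_0^T(\|\tilde f_s\|^2+\|\tilde g_s\|^2+\|\tilde h_s\|^2)ds\Bigr)^q\Bigr)<\infty,
\]
by the $L^{2q}(\Omega;L^2)$ hypothesis on $\tilde f,\tilde g,\tilde h$ and the fact that $v_0$ is upper bounded on the bounded domain $\mathcal O$. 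A standard localization through $\tau_N=\inf\{t:\sup_{s\leq t}\|v^+_s\|^2\geq N\}\wedge T$ legitimates the absorption step before sending $N\to\infty$ by monotone convergence.

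\emph{Step 3 (Tail decomposition and integration).} For $\lambda>\lambda_0$,
\[
\{\|v^+\|_{L^\infty}>\lambda\}\subset\{\|v^+\|_{L^\infty}>\lambda,\ V^0\leq\sqrt\lambda\}\cup\{V^0>\sqrt\lambda\}.
\]
Lemma \ref{u+ V0} controls the first event by $2e^{-C'\lambda^{2\alpha_0}}$ and Markov's inequality controls the second by $\lambda^{-q/2}E(V^0)^q$. Then
\[
E\|v^+\|_{L^\infty}^p=\int_0^\infty p\lambda^{p-1}P(\|v^+\|_{L^\infty}>\lambda)\,d\lambda\leq\lambda_0^p+\int_{\lambda_0}^\infty p\lambda^{p-1}\Bigl(2e^{-C'\lambda^{2\alpha_0}}+\tfrac{E(V^0)^q}{\lambda^{q/2}}\Bigr)d\lambda.
\]
The exponential term decays faster than any polynomial, so it is integrable; the polynomial term is $\int_{\lambda_0}^\infty p\lambda^{p-1-q/2}d\lambda$, which converges precisely when $p-1-q/2<-1$, i.e.\ $q>2p$. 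This is exactly the hypothesis, and the theorem follows.

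\emph{Main obstacle.} The routine part is the layer-cake integration in Step 3. The real technical work sits in Step 2: one must secure a \emph{$q$-th} moment bound on $V^0$ (which bundles the uniform $L^2$-norm, the $L_k$-energies, \emph{and} the fractional $H^\eta$-seminorm produced by Lemma \ref{hormander lemma} under the H\"ormander condition \textbf{(HA)}) from only the $L^{2q}(\Omega;L^2([0,T]\times\mathcal{O}))$ integrability of the data. This requires combining the obstacle-problem It\^o formula (Theorem \ref{Itoformula}), the non-positivity of $\int v^+\,d\nu$, a sharp BDG estimate of order $q$, and a Young-type absorption argument, all under an appropriate stopping-time localization so that the self-referential bound on $E(V^0)^q$ can be rigorously closed.
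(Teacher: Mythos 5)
Your proposal is correct and follows essentially the same route as the paper: the $m=0$ specialization of the energy estimate combined with a Doob/BDG bound of order $q$ and Young absorption to get $E(V^0)^q<\infty$ from the $L^{2q}(\Omega;L^2)$ data, then the layer-cake integral split into the sub-Gaussian tail from Lemma \ref{u+ V0} and the Chebyshev tail $\lambda^{-q/2}E(V^0)^q$, with convergence exactly under $q>2p$. The stopping-time localization you add to close the self-referential moment bound is a sensible rigor-enhancing detail the paper leaves implicit.
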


\begin{proof}
Recall the estimate ($\ref{vm estimates}$) and let $m=0$. We obtain that
\begin{eqnarray*}
&&\sup_{t\in[0,T]}\|v^0_t\|^2+\int_{0}^{T}\|v^0_s\|^2_{H^\eta}+\sum_{k=1}^{n}\|L_k v^0_s\|^2ds\\
&\leq& \|v^0_0\|^2+2\int_{0}^{T}\|\tilde{f}_s I_{\{v^0_s>0\}}\|^2+\|\tilde{g}_s I_{\{v^0_s>0\}}\|^2+\|I_{\{v^0_s>0\}}\tilde{h}_s\|^2ds
+2\sup_{t\in[0,T]}|M^0_t|\,.\nonumber
\end{eqnarray*}

By Doob's martingale inequality,
\begin{eqnarray}
E\left[\sup_{t\in[0,T]}|M^0_t|^q\right]&\leq& CE\left[\langle M^0\rangle_T^{\frac{q}{2}}\right]\nonumber\\
&\leq& C'E\left[\left(\int_{0}^{T}(v^0_s, \tilde{h}_sI_{\{v^0_s>0\}})^2ds\right)^{\frac{q}{2}}\right]\nonumber\\
&\leq& C'E\left[\left(\sup_{t\in[0,T]}\|v^0_t\|^q\right)\cdot\left(\int_{0}^{T}\|\tilde{h}_s I_{\{v^0_s>0\}}\|^2ds\right)^{\frac{q}{2}}\right]\nonumber\\
&\leq& C'\epsilon E\left[\sup_{t\in[0,T]}\|v^0_t\|^{2q}\right]+\frac{C'}{\epsilon}E\left[\left(\int_{0}^{T}\|\tilde{h}_s I_{\{v^0_s>0\}}\|^2ds\right)^q\right].\nonumber
\end{eqnarray}
Choosing $\epsilon$ small enough, then it follows that
\begin{eqnarray*}
&&E\left[\sup_{t\in[0,T]}\|v^0_t\|^{2q}+\left(\int_{0}^{T}\|v^0_s\|^2_{H^\eta}+\sum_{k=1}^{n}\|L_k v^0_s\|^2ds\right)^q\right]\nonumber\\
&\leq& C'' E\left[\|v^0_0\|^{2q}+\|\tilde{f} I_{\{v^0>0\}}\|^{2q}_{L^{2}([0,T]\times\mathcal{O})}+\|\tilde{h} I_{\{v^0>0\}}\|^{2q}_{L^{2}([0,T]\times\mathcal{O})}
+\|\tilde{g} I_{\{v^0>0\}}\|^{2q}_{L^{2}([0,T]\times \mathcal{O})}\right].
\end{eqnarray*}

Then
\begin{eqnarray*}
&&E(V^0)^{q}\leq C E\left[\sup_{t\in[0,T]}\|v^0_t\|^{2q}+\left(\int_{0}^{T}\|v^0_s\|^2_{H^\eta}+\sum_{k=1}^{n}\|L_k v^0_s\|^2ds\right)^q\right]\nonumber\\
&\leq& C E\left[\|v^0_0\|^{2q}+\|\tilde{f} I_{\{v^0>0\}}\|^{2q}_{L^{2}([0,T]\times\mathcal{O})}+\|\tilde{h} I_{\{v^0>0\}}\|^{2q}_{L^{2}([0,T]\times\mathcal{O})}
+\|\tilde{g}I_{\{v^0>0\}}\|^{2q}_{L^{2}([0,T]\times\mathcal{O})}\right].
\end{eqnarray*}

Finally, we obtain the $L^p-$estimate for the solution of OSPDEs:
\begin{eqnarray}
&&E\|(u-S')^+\|^p_{L^{\infty}([0,T]\times\mathcal{O})}\nonumber\\
&=&p\int_{0}^{\infty} P\left(\left\{\|v^+\|_{L^{\infty}([0,T]\times\mathcal{O})}>\lambda\right\}\right)\lambda^{p-1}d\lambda\nonumber\\
&\leq& p\int_{0}^{\lambda_0}\lambda^{p-1}d\lambda
+p\int_{\lambda_0}^{\infty} P\left(\left\{\|v^+\|_{L^{\infty}([0,T]\times\mathcal{O})}>\lambda\right\}\right)\lambda^{p-1}d\lambda\nonumber\\
&\leq& \lambda^p_0
+p\int_{\lambda_0}^{\infty} P\left(\left\{\|v^+\|_{L^{\infty}([0,T]\times\mathcal{O})}>\lambda , V^0\leq \sqrt{\lambda}\right\}\right)\lambda^{p-1}d\lambda\nonumber\\
&&+p\int_{\lambda_0}^{\infty} P\left(\left\{ V^0> \sqrt{\lambda}\right\}\right)\lambda^{p-1}d\lambda\nonumber\\
&\leq&\lambda^p_0+2p\int_{\lambda_0}^{\infty}e^{-C'\lambda^{2\alpha_0}}\lambda^{p-1}d\lambda
+p\int_{\lambda_0}^{\infty}\frac{E[|V^0|^q]}{\lambda^{\frac{q}{2}}}\lambda^{p-1}d\lambda\nonumber\\
&<&+\infty\,,\qquad\quad\mbox{when} \quad q>2p.\nonumber
\end{eqnarray}

\end{proof}

\textbf{Acknowlegement}

The authors would like to thank two referees for the careful reading and very useful comments.

\end{document}